\numberwithin{equation}{section}
\newtheorem{theorem}{Theorem}
\numberwithin{theorem}{section}
\newtheorem{lemma}[theorem]{Lemma}
\newtheorem{proposition}[theorem]{Proposition}
\newtheorem{corollary}[theorem]{Corollary}
\theoremstyle{definition}
\newtheorem*{remark}{Remark}
\renewcommand{\phi}{\varphi}
\newcommand{\ZZ}{\mathbb{Z}}
\newcommand{\QQ}{\mathbb{Q}}
\renewcommand{\tilde}{\widetilde}
\newcommand{\R}{\mathbb{R}}
\newcommand{\Q}{\mathbb{Q}}
\newcommand{\kn}{\mathfrak{n}}
\newcommand{\kp}{\mathfrak{p}}
\newcommand{\re}{\textup{Re}}
\newcommand{\im}{\textup{Im}}
\newcommand{\kb}{\mathfrak{b}}
\newcommand{\GL}{\mathrm{GL}}
\newcommand{\N}{\mathrm{N}}
\newcommand{\ka}{\mathfrak{a}}
\renewcommand{\bar}{\overline}
\renewcommand{\epsilon}{\varepsilon}
\renewcommand{\pmod}[1]{\, (\mathrm{mod} {\, #1})}
\newcommand{\cO}{\mathcal{O}}
\newcommand{\kq}{\mathfrak{q}}
\renewcommand{\Re}{\mathop{\mathrm{Re}}}
\renewcommand{\leq}{\leqslant}
\renewcommand{\geq}{\geqslant}
\renewcommand{\bar}{\overline}
\DeclareMathOperator{\Mod}{mod}
\renewcommand{\bmod}[1]{\,(\Mod{#1})}
\begin{document}
	\title{A Bombieri--Vinogradov theorem for higher rank groups}
	
	\author{Yujiao Jiang}
	
	\address{Yujiao Jiang\\
		School of Mathematics and Statistics
		\\
		Shandong University
		\\
		Weihai
		\\
		Shandong 264209
		\\
		China}
	\email{yujiaoj@sdu.edu.cn}
	
	\author{Guangshi L\"u}
	
	\address{Guangshi L\"u\\
		School of Mathematics
		\\
		Shandong University
		\\
		Jinan
		\\
		Shandong 250100
		\\
		China}
	\email{gslv@sdu.edu.cn}
	
	\author{Jesse Thorner}
		\address{Jesse Thorner\\
	Department of Mathematics
		\\
		University of Illinois
		\\
		Urbana
		\\
		IL 61801
		\\
	United States}
	\email{jesse.thorner@gmail.com}

	\author{Zihao Wang}
	
	\address{Zihao Wang\\
		School of Mathematics
		\\
		Shandong University
		\\
		Jinan
		\\
		Shandong 250100
		\\
		China}
	\email{wangzihao36@outlook.com }

	\date{\today}

	\begin{abstract}
We establish a result of Bombieri--Vinogradov type for the Dirichlet coefficients at prime ideals of the standard $L$-function associated to a self-dual cuspidal automorphic representation $\pi$ of $\operatorname{GL}_n$ over a number field $F$ which is not a quadratic twist of itself.  Our result does not rely on any unproven progress towards the generalized Ramanujan conjecture or the nonexistence of Landau--Siegel zeros. In particular, when $\pi$ is fixed and not equal to a quadratic twist of itself, we prove the first unconditional Siegel-type lower bound for the twisted $L$-values $|L(1,\pi\otimes\chi)|$ in the $\chi$-aspect, where $\chi$ is a primitive quadratic Hecke character over $F$.  Our result improves the levels of distribution in other works that relied on these unproven hypotheses. As applications, when $n=2,3,4$, we prove a $\GL_n$ analogue of the Titchmarsh divisor problem and a nontrivial bound for a certain $\GL_n\times\GL_2$ shifted convolution sum. 
	\end{abstract}
	
	\subjclass[2010]{11F66, 11M41}
	\keywords{Bombieri--Vinogradov theorem, automorphic $L$-functions, Siegel's zero.}
	\maketitle
	
\setcounter{tocdepth}{1}
\tableofcontents

\section{Introduction} The distribution of primes in arithmetic progressions attracts a lot of attention among mathematicians. Let $a, q$ be two integers such that $(a,q)=1$. We denote by $\pi(x)$ the number of primes $p\leq x$ and by $\pi(x; q, a)$ the number of primes $p\leq x$ satisfying $p\equiv a \bmod q$. Dirichlet's theorem indicates the following
\begin{equation*}
	\pi(x;q,a)\sim \frac{\pi(x)}{\phi(q)},
\end{equation*}
where $\phi$ is Euler's totient function. Later, after Siegel's result on the location of exceptional zero of Dirichlet $L$-functions, Walfisz proved that {for all $\alpha>0$, there exists an ineffective constant $c_{\alpha}>0$ such that if $q\leq (\log x)^{\alpha}$, then}
\begin{equation*}
	\pi(x;q,a)= \frac{\pi(x)}{\phi(q)}+O\big(x\exp(-{c_{\alpha}}(\log x)^{1/2})\big).
\end{equation*}
When the modulus $q$ gets larger, this problem becomes much more difficult. If the generalized Riemann hypothesis (GRH) holds, then
\begin{equation*}
	\pi(x;q,a)= \frac{\pi(x)}{\phi(q)}+O\big(x^\frac{1}{2}\log qx\big)
\end{equation*}
holds for $q\geq x^{1/2-\varepsilon}$. However, such a hypothesis is very far from being proved.

The celebrated Bombieri--Vinogradov theorem in some sense shows that GRH holds on average. To be precise, let $A$ be any positive real number, there exists $B=B(A)>0$ such that for $Q\leq x^\frac{1}{2}(\log x)^{-B}$,
\begin{equation}\label{BV-classical}
	\sum_{q \leqslant Q} \max _{(a, q)=1} \max _{y \leqslant x}\Big|\pi(y, q, a)-\frac{\pi(y)}{\phi(q)} \Big| \ll_A \frac{x}{(\log x)^{A}}.
\end{equation}
This can be viewed as a fine substitute for the GRH in many applications. The theorem was originally proved using zero density estimates. After the work of Bombieri and Vinogradov, different proofs of this theorem are given by Gallagher \cite{Gallagher-1968} and Vaughan \cite{Vaughan-1980}.

There are a lot of higher-rank analogues of the classical Bombieri--Vinogradov theorem. Firstly, by means of Gallagher's method, Grupp \cite{Grupp-1980} obtained under a certain condition concerning Siegel's zeros of $\operatorname{GL}_2$ automorphic $L$-functions,
\begin{equation*}
	\sum_{q \leq x^{2 / 9} (\log x)^{-B}}\max_{(a,q)=1} \Big|\sum_{\substack{n\leq x \\ n\equiv a\bmod q}}\Lambda(m)\tau(m)m^{-\frac{11}{2}}\Big|\ll_A \frac{x}{(\log x)^{A}},
\end{equation*}
where $\Lambda(m)$ is the von Mangoldt function and $\tau(m)$ is the Ramanujan $\tau$-function. Later, Perelli \cite{Perelli-1984} used the generalized Vaughan identity for $\operatorname{GL}_2$ automorphic $L$-functions and unconditionally proved the mean-value theorem with a level of distribution $2/5$ instead of $2/9$.
Actually, Perelli's approach still works for any holomorphic cusp form on $\operatorname{SL}_2(\ZZ)$. Recently, Acharya \cite{Acharya-2018} and the first two authors \cite{JL-2020} improved independently the level to $1/2$ for any holomorphic or Maass cusp form on $\operatorname{SL}_2(\ZZ)$. For any automorphic form $\pi$ on higher-rank group $\operatorname{SL}_n(\ZZ)$ with $n\geq 3$, let $\lambda_\pi(m)$ to be the $m$-th Dirichlet coefficient of the associated $L$-function $L(s, \pi)$, one can also show a result of Bombieri--Vinogradov type
\begin{equation}\label{bv-qm}
	\sum_{q \leq Q}\max_{(a,q)=1} \Big|\sum_{\substack{m\leq x \\ m\equiv a\bmod q}}\Lambda(m)\lambda_\pi(m)\Big|\ll_{A,\pi} \frac{x}{(\log x)^{A}}.
\end{equation}
For instance, the first two authors \cite{JL-2020} established \eqref{bv-qm} with $Q=x^{\frac{2}{n+1}}(\log x)^{-B}$ under the generalized Ramanujan conjecture (GRC) and a certain condition concerning Siegel's zeros of {the twisted $L$-functions} $L(s,\pi\otimes\chi)$. Wong \cite{Wong-2020} showed \eqref{bv-qm} with $Q=x^{\min\{\frac{1}{n-2}, \frac{1}{2}\}-\varepsilon}$ under two similar conditions. The main tools of Jiang and L\"u are the generalized Vaughan identity and the distribution of $\lambda_\pi(m)$ in arithmetic progressions, while that of Wong is Gallagher's technique as in \cite{Gallagher-1968}.

In this paper, we will explore further the possibility of Vaughan's method and show an unconditional result for higher-rank groups in a number field. We refer the reader to Section 2 for the detailed introduction to the notation. {Let $\mathbb{A}_F$ be the ring of adeles over a number field $F$, and let $\mathfrak{F}_n$ be the set of cuspidal automorphic representations of $\mathrm{GL}_n(\mathbb{A}_F)$ with unitary central character, normalized such that the central character is trivial on the diagonally embedded copy of the positive reals.  Given $\pi\in\mathfrak{F}_n$,}
let $\mathfrak{q}_\pi$ be the conductor of $\pi$, $L(s, \pi)$ be the associated standard $L$-function, and $\tilde{\pi}\in\mathfrak{F}_n$ be the contragredient representation.  We write $\lambda_\pi(\mathfrak{n})$ to be the $\mathfrak{n}$-th Dirichlet coefficient of $L(s, \pi)$, where $\mathfrak{n}$ is an integral ideal in $F$.
Let $\mathrm{N}=\mathrm{N}_{F/\mathbb{Q}}$ to be the numerical norm.
As the classical Bombieri--Vinogradov theorem \eqref{BV-classical}, we will consider estimates of large sieve type associated to $\lambda_{\pi}(\mathfrak{p})$ with certain congruence condition. We denote by $\mathrm{Cl}^{+}(\mathfrak{m})$ the narrow class group modulo $\mathfrak{m}$. 
Let $h(\mathfrak{m})$ be the cardinality of  $\mathrm{Cl}^{+}(\mathfrak{m})$ and $\phi_F(\mathfrak{m}):=\mathrm{N}\mathfrak{m}\prod_{\mathfrak{p}|\mathfrak{m}}(1-\mathrm{N}\mathfrak{p}^{-1})$.

Our arguments require that if $\pi\in\mathfrak{F}_n$, then $\pi=\tilde{\pi}$.  This self-duality implies that $\lambda_{\pi}(\kn)\in\R$ for all $\kn$.  Also, we require that for all $\mathfrak{m}\subseteq \cO_F$ and all nontrivial primitive {quadratic} Hecke characters of $\mathrm{Cl}^+(\mathfrak{m})$, we have $\pi\neq\pi\otimes\chi$.  We let $\mathfrak{F}_n^{\flat}$ denote the set of all $\pi\in\mathfrak{F}_n$ satisfying these two hypotheses.  We prove the following result.

\begin{theorem}\label{thm 1.1}
	 Fix $\pi\in\mathfrak{F}_n^\flat$.  If $A>0$, $B=2^{\frac{n[F:\Q]}{4}}(6A+12n+34)+2n-4$, and $\eta=\max\{2,\frac{n}{2}\}$, then
	\begin{equation*}
		\sum_{\mathrm{N}\mathfrak{m}\leq Q}\frac{h(\mathfrak{m})}{\phi_F(\mathfrak{m})}\max_{(\mathfrak{a},\mathfrak{m})=\cO_F}\max_{y\leq x}\Big|\sum_{\substack{\mathrm{N}\mathfrak{p}\leq y \\ \mathfrak{p}\equiv \mathfrak{a} \text{ in } \mathrm{Cl}^{+}(\mathfrak{m})}}\lambda_\pi(\mathfrak{p})\Big|\ll_{A} \frac{x}{(\log x)^A},
	\end{equation*}
    where $x\geq 2$ and $Q=x^{\frac{1}{\eta}}(\log x)^{-B}$. 
	The implied constant is ineffective.
\end{theorem}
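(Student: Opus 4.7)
My plan is to use orthogonality of characters of $\mathrm{Cl}^+(\mathfrak{m})$ to convert the congruence condition into twisted prime sums,
\[
\sum_{\substack{\mathrm{N}\mathfrak{p}\le y\\ \mathfrak{p}\equiv\mathfrak{a}\text{ in }\mathrm{Cl}^+(\mathfrak{m})}}\lambda_\pi(\mathfrak{p}) \;=\; \frac{1}{h(\mathfrak{m})}\sum_{\chi}\bar\chi(\mathfrak{a})\sum_{\mathrm{N}\mathfrak{p}\le y}\chi(\mathfrak{p})\lambda_\pi(\mathfrak{p}),
\]
with $\chi$ ranging over characters of $\mathrm{Cl}^+(\mathfrak{m})$. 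The outer weight $h(\mathfrak{m})/\phi_F(\mathfrak{m})$ exactly absorbs the $1/h(\mathfrak{m})$, so it suffices to bound the inner twisted prime sums on average against $1/\phi_F(\mathfrak{m})$. The trivial-character contribution is handled by the prime ideal theorem for $L(s,\pi)$ with its standard zero-free region, giving $\sum_{\mathrm{N}\mathfrak{p}\le y}\lambda_\pi(\mathfrak{p})\ll y\exp(-c\sqrt{\log y})$; after summation over $\mathrm{N}\mathfrak{m}\le Q$ this comfortably beats $(\log x)^{-A}$. The real task is to bound the nontrivial-character contribution on average over $\mathfrak{m}$ and $\chi$.

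For this I would implement a Vaughan-type identity tailored to $-L'(s,\pi\otimes\chi)/L(s,\pi\otimes\chi)$, adapted to the number-field setting, which decomposes the twisted prime sum into Type I sums (linear in a divisor variable confined to a short range) and Type II bilinear sums in $\chi(\mathfrak{n})\lambda_\pi(\mathfrak{n})$. I would dispatch Type I sums by smoothing, partial summation, and Polya--Vinogradov for Hecke characters, controlled by convexity for $L(s,\pi\otimes\chi)$. I would handle Type II sums by Cauchy--Schwarz followed by the multiplicative large sieve for primitive Hecke characters modulo ideals of norm at most $Q$. The key arithmetic input in both cases is the second-moment bound $\sum_{\mathrm{N}\mathfrak{n}\le x}|\lambda_\pi(\mathfrak{n})|^2\ll_\pi x$, coming from the simple pole of $L(s,\pi\times\tilde\pi)=L(s,\pi\times\pi)$ at $s=1$. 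The level $\eta=\max\{2,n/2\}$ emerges from balancing the Type I and Type II ranges, once the Ramanujan conjecture is replaced by the Jacquet--Shalika bound together with this Rankin--Selberg mean-square estimate.

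The main obstacle, and the reason the unconditional statement is subtle, is the possibility of a Landau--Siegel zero for $L(s,\pi\otimes\chi)$ when $\chi$ is a primitive real Hecke character of small conductor: for such $\chi$ the zero-free region and log-free zero-density bounds are insufficient to beat $(\log x)^{-A}$. This is where the hypothesis $\pi\in\mathfrak{F}_n^\flat$ becomes decisive. Under $\pi=\tilde\pi$ and $\pi\neq\pi\otimes\chi$ for every nontrivial quadratic $\chi$, the isobaric sum $\Pi=1\boxplus\chi\boxplus\pi\boxplus(\pi\otimes\chi)$ is self-dual, and the Rankin--Selberg $L$-function
\[
L(s,\Pi\times\Pi) \;=\; \zeta_F(s)^2\,L(s,\chi)^2\,L(s,\pi)^4\,L(s,\pi\otimes\chi)^4\,L(s,\pi\times\pi)^2\,L(s,\pi\times\pi\otimes\chi)^2
\]
is a Dirichlet series with nonnegative coefficients whose pole at $s=1$ has order exactly $4$, coming from $\zeta_F^2 L(s,\pi\times\pi)^2$, with $L(s,\pi\times\pi\otimes\chi)^2$ regular at $s=1$ precisely because $\pi\neq\pi\otimes\chi$. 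A Deuring--Heilbronn-style comparison between two putative exceptional quadratic characters, exploiting this positivity and regularity, then yields the ineffective Siegel-type lower bound $|L(1,\pi\otimes\chi)|\gg_{\pi,\varepsilon}(\mathrm{N}\mathfrak{q}_\chi)^{-\varepsilon}$ advertised in the abstract. Inserted into the explicit-formula analysis of $L(s,\pi\otimes\chi)$, this bound converts a would-be exceptional pole into a small and controlled shift of the main term, which can be absorbed into the final $(\log x)^{-A}$ saving.

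The hardest step will be producing this Siegel lower bound uniformly enough in $\chi$ and $\mathfrak{m}$ to survive summation over $\mathrm{N}\mathfrak{m}\le Q$, and then threading the associated zero-density information consistently through the Vaughan--large-sieve framework so as to close the balance at $Q=x^{1/\eta}(\log x)^{-B}$. The shape $B\sim 2^{n[F:\mathbb{Q}]/4}A$ of the final constant reflects the convexity exponent for Rankin--Selberg $L$-functions on $\mathrm{GL}_n\times\mathrm{GL}_n$ over $F$ that drives the averaged mean-squares, while the ineffectivity is inherited from Siegel's argument; everything else is the classical Vaughan--Gallagher bilinear-forms plus large-sieve machinery transported to the narrow-class-group setting.
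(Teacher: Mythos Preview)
Your overall architecture---orthogonality, Vaughan decomposition, large sieve for Type II, and a Siegel--Walfisz input for small moduli---matches the paper. However, two steps as written would not close at the claimed level.

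First, treating Type I sums by P\'olya--Vinogradov and pointwise convexity for $L(s,\pi\otimes\chi)$ recovers only the level of earlier work; the paper's improvement to $\eta=\max\{2,n/2\}$ comes instead from averaging over all primitive $\chi$ of conductor up to $Q$ and invoking a second-moment bound for $L(\tfrac12+it,\pi\otimes\chi)$ (Proposition~\ref{prop 4.3}), which extracts cancellation from the sum over moduli that any pointwise estimate misses (see the remark after Theorem~\ref{thm-bv-integers}). Relatedly, the factor $2^{n[F:\Q]/4}$ in $B$ does not come from Rankin--Selberg convexity but from a $\rho$-fold de-smoothing iteration with $\rho\approx n[F:\Q]/4$ in the proof of Theorem~\ref{thm-bv-integers}.

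Second, your auxiliary $L(s,\Pi\times\Pi)$ with $\Pi=\mathbbm{1}\boxplus\chi\boxplus\pi\boxplus(\pi\otimes\chi)$ has a pole of \emph{even} order (four) at $s=1$, and Section~\ref{sec:ZFR} of the paper is devoted to explaining why this breaks the classical Siegel mechanism: since $L(s,\Pi\times\Pi)\sim R/(s-1)^4>0$ as $s\to1^-$, one cannot force the sign change needed to extract a lower bound for $|L(1,\pi\otimes\chi)|$. The paper's fix uses a \emph{two-character} construction $\Pi^\star$ depending on both $\chi$ and a second character $\chi'$, chosen so that $L(\beta,\pi\otimes\chi')=0$ for some $\beta\in(1-\epsilon,1)$; the resulting vanishing $L(\beta,\Pi^\star)=0$ substitutes for the sign argument inside a smoothed Mellin inversion. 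When at most one such exceptional $\chi'$ exists, the paper instead appeals to Li's effective lower bound (Lemma~\ref{lem:Li_lower}), a step your sketch omits. Your phrase ``comparison between two putative exceptional characters'' is in the right spirit, but the displayed $\Pi$ contains only one, and the even-pole obstruction is precisely the error in prior literature that the paper sets out to correct.
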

~
\begin{remark}
~
\begin{enumerate}[1.]
	\item The congruence condition ``$\kp\equiv\ka\text{ in }\mathrm{Cl}^+(\mathfrak{m})$'' is defined in Section \ref{subsec:GL1twists}.  This generalizes the usual notion of congruences on the integers to the integral ideals of $F$.  In particular, if $F=\Q$, then the bound in Theorem \ref{thm 1.1} becomes
	\[
	\sum_{q\leq Q}\max_{\gcd(a,q)=1}\max_{y\leq x}\Big|\sum_{\substack{p\leq y \\ p\equiv a\pmod{q}}}\lambda_{\pi}(p)\Big|\ll_A\frac{x}{(\log x)^A},
	\]
	where $p$ (resp. $a$ and $q$) are rational primes (resp. rational integers).
	\item If we adjust Theorem \ref{thm 1.1} so that we sum over $\mathfrak{m}$ satisfying $(\mathfrak{m},\kq_{\pi})=\cO_F$, then we may obtain a similar result with the same level of distribution.  Our result would then hold for all self-dual $\pi$ since the condition $\pi\neq\pi\otimes\chi$ automatically holds.
	\item The weight $\frac{h(\mathfrak{m})}{\phi_F(\mathfrak{m})}$ is introduced by Huxley in \cite{Huxley-1971} to cancel the contribution coming from the unit group (see \eqref{exactseq}).
	\item  Note that the analogue of Elliott-Halberstam conjecture will predict that $\eta=1+\varepsilon$, and the GRH for automorphic $L$-functions will trivially give that $\eta=2$. Since the {arithmetic} conductor of $L(s,\pi\otimes\chi)$ might be quite large, it is hard to achieve any of them by our argument. 


\end{enumerate}
\end{remark}

To handle the contribution when $\N\mathfrak{m}$ is smaller than a power of $\log x$, we need to prove an analogue of the Siegel--Walfisz theorem for the Dirichlet coefficients of $-\frac{L'}{L}(s,\pi)$.  One of the novelties in our work which allows us to prove such a result without recourse to unproven hypotheses is a new Siegel-type lower bound for $|L(1,\pi\otimes\chi)|$ when $\chi$ is a primitive quadratic Hecke character and $\pi\in\mathfrak{F}_n$ (not necessarily self-dual) is not a quadratic twist of itself.

\begin{theorem}
\label{thm:siegel}
Fix $\pi\in\mathfrak{F}_n$, and suppose that $\pi\neq\pi\otimes\nu$ for all primitive quadratic Hecke characters $\nu$.  Let $\chi\pmod{\kq}$ be a primitive quadratic Hecke character.  For all $\epsilon>0$, there exists an ineffective constant $c_{\pi}'(\epsilon)>0$ such that $|L(1,\pi\otimes\chi)|\geq c_{\pi}'(\epsilon)\N\kq^{-\epsilon}$.
\end{theorem}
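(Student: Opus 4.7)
The plan is to adapt Siegel's classical dichotomy argument to the automorphic setting via a Rankin--Selberg $L$-function with nonnegative Dirichlet coefficients. Let $\chi^{*}$ be an auxiliary primitive quadratic Hecke character, distinct from $\chi$, to be chosen later. Working with the isobaric automorphic representation $\Pi=\mathbf{1}\boxplus(\pi\otimes\chi)\boxplus(\pi\otimes\chi^{*})$ on $\GL_{2n+1}(\AA_F)$, a direct expansion of the Rankin--Selberg convolution and the relations $\chi^{2}=(\chi^{*})^{2}=1$ give
\begin{align*}
F(s) := L(s,\Pi\times\tilde\Pi)
&=\zeta_F(s)\,L(s,\pi\times\tilde\pi)^{2}\,L(s,\pi\times\tilde\pi\otimes\chi\chi^{*})^{2}\\
&\quad\times L(s,\pi\otimes\chi)L(s,\tilde\pi\otimes\chi)\,L(s,\pi\otimes\chi^{*})L(s,\tilde\pi\otimes\chi^{*}).
\end{align*}
By Rankin--Selberg theory, $F(s)$ is a Dirichlet series with nonnegative coefficients and constant term $1$, so $F(\sigma)\geq 1$ for every real $\sigma>1$. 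The standing hypothesis $\pi\neq\pi\otimes\nu$ ensures that $L(s,\pi\times\tilde\pi\otimes\chi\chi^{*})$ is entire whenever $\chi\chi^{*}\neq 1$, and hence $F(s)$ has a pole of order exactly $3$ at $s=1$. Moreover, since $\chi$ and $\chi^{*}$ are real, $\overline{L(s,\pi\otimes\chi)}=L(\bar s,\tilde\pi\otimes\chi)$, so the leading Laurent coefficient at $s=1$ is a positive multiple of $|L(1,\pi\otimes\chi)|^{2}\,|L(1,\pi\otimes\chi^{*})|^{2}$ times strictly positive, $\pi$-dependent finite factors.

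The argument then proceeds by the standard Siegel dichotomy. In the \emph{good case}, no primitive quadratic Hecke character $\chi^{*}$ yields a real zero of $L(s,\pi\otimes\chi^{*})$ inside $[1-\delta(\epsilon),1)$ for a suitable $\delta(\epsilon)>0$ depending only on $\pi$ and $\epsilon$; standard Hadamard-product arguments combined with convexity bounds for $L(s,\pi\otimes\chi)$ on vertical lines then yield the sought lower bound with an \emph{effective} constant. In the \emph{exceptional case}, there exists a fixed pair $(\chi^{*},\beta^{*})$ with $\beta^{*}\in[1-\delta(\epsilon),1)$ and $L(\beta^{*},\pi\otimes\chi^{*})=0$. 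Evaluating $F(\beta^{*})\geq 1$ and expanding $F$ around its triple pole at $s=1$ (e.g.\ by a Mellin--Barnes contour shift, or by a direct Laurent expansion controlled with convexity bounds on the auxiliary factors) produces a chain of inequalities which, after solving for $|L(1,\pi\otimes\chi)|$ and absorbing the now-fixed data $(\chi^{*},\beta^{*},\pi)$ into a constant $c_{\pi}'(\epsilon)$, yields $|L(1,\pi\otimes\chi)|\geq c_{\pi}'(\epsilon)\N\kq^{-\epsilon}$; the constant is ineffective because we cannot decide a priori which of the two alternatives actually occurs.

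The main obstacle is the exceptional case: the triple pole must be quantified precisely enough that the exceptional zero $\beta^{*}$ provides sufficient leverage to defeat the $\N\kq^{\epsilon}$ losses incurred by convexity bounds applied to the auxiliary factors $L(s,\pi\times\tilde\pi\otimes\chi\chi^{*})$ and $L(s,\tilde\pi\otimes\chi)$, whose analytic conductors can be as large as a fixed power of $\N\kq\cdot\N\kq^{*}$. A secondary technical point, specific to the number field setting, is handling the possibly imprimitive product character $\chi\chi^{*}$, which is dealt with by replacing it with its underlying primitive Hecke character and bounding the finitely many discarded local Euler factors by absolute constants depending only on the ramification at primes dividing $\kq\kq^{*}$.
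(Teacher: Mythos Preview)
Your construction is different from the paper's and looks attractive because the auxiliary $L$-function $F(s)=L(s,\Pi\times\tilde\Pi)$ has a pole of \emph{odd} order (three) at $s=1$, which is exactly the property the paper's construction lacks. Unfortunately, the approach has a genuine gap in the exceptional case.

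First, the assertion ``$F(\beta^{*})\geq 1$'' is false. The inequality $F(\sigma)\geq 1$ comes from nonnegativity of the Dirichlet coefficients and holds only in the half-plane of absolute convergence $\sigma>1$; at $\beta^{*}<1$ one actually has $F(\beta^{*})=0$, since $L(s,\pi\otimes\chi^{*})$ divides $F$. The correct mechanism (which you allude to with ``Mellin--Barnes contour shift'') is to bound a smoothed partial sum $\sum_{\mathfrak{n}}\lambda(\mathfrak{n})\N\mathfrak{n}^{-\beta^{*}}w(\N\mathfrak{n}/x)\geq w(1/x)$ from below by its first term, and then shift the contour.

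The more serious problem is the residue structure at the triple pole. Write $F(s)=L(s,\pi\otimes\chi)L(s,\tilde\pi\otimes\chi)\,G(s)$ with $G$ carrying the order-3 pole. In the contour shift, the residue at $s=1-\beta^{*}$ requires the Taylor expansion of the holomorphic factor $L(s,\pi\otimes\chi)L(s,\tilde\pi\otimes\chi)$ to second order. The second derivative contributes a term
\[
2\,L'(1,\pi\otimes\chi)\,L'(1,\tilde\pi\otimes\chi)=2\,|L'(1,\pi\otimes\chi)|^{2},
\]
which carries \emph{no} factor of $|L(1,\pi\otimes\chi)|$. Consequently the residue is of the shape
\[
x^{1-\beta^{*}}\Big(|L(1,\pi\otimes\chi)|^{2}\cdot[\cdots]+|L(1,\pi\otimes\chi)|\cdot[\cdots]+|L'(1,\pi\otimes\chi)|^{2}\cdot[\cdots]\Big),
\]
and the last bracket is $\ll_{\chi^{*},\epsilon}\N\kq^{\epsilon}$ independently of $|L(1,\pi\otimes\chi)|$. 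Hence the inequality $\tfrac{1}{2}\leq\text{Res}+O(\N\kq^{M}x^{1/2-\beta^{*}})$ can be satisfied by the $|L'|^{2}$ term alone, and you cannot solve for a lower bound on $|L(1,\pi\otimes\chi)|$.

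The paper's auxiliary $L$-function is engineered so that its pole order (two) equals the number of factors $L(s,\pi\otimes\chi)$, $L(s,\tilde\pi\otimes\chi)$ present; then only the \emph{first} derivative of their product enters the residue, and every resulting term retains at least one factor of $|L(1,\pi\otimes\chi)|$ (Lemma~\ref{lem:residue_calc}). The even-order complication is then handled separately by Lemma~\ref{lem:Li_lower}. Your construction would work if you could arrange for $|L(1,\pi\otimes\chi)|$ to appear to at least the third power in the leading Laurent coefficient, but $\Pi=\mathbf{1}\boxplus(\pi\otimes\chi)\boxplus(\pi\otimes\chi^{*})$ only gives the square.
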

\begin{remark}
Over $\Q$, Theorem \ref{thm:siegel} was claimed by Molteni in his PhD thesis, but there is a serious deficiency in his argument.  Since this deficiency has occurred in several different papers (even before Molteni's), we detail the deficiency and address it in Section \ref{sec:ZFR}.
\end{remark}
To handle the contribution when $\N\mathfrak{m}$ is larger than a power of $\log x$, we require a modification of Vaughan's approach to the Bombieri--Vinogradov theorem. The problem of estimating 
\[
\sideset{}{}{\sum}_{\mathrm{N}\mathfrak{m}\leq Q}\frac{h(\mathfrak{m})}{\phi_F(\mathfrak{m})}\max_{(\mathfrak{a},\mathfrak{m})=\cO_F}\max_{y\leq X}\Big|\sum_{\substack{\mathrm{N}\mathfrak{p}\leq y \\ \mathfrak{p}\equiv \mathfrak{a} \text{ in } \mathrm{Cl}^{+}(\mathfrak{m})}}\lambda_\pi(\mathfrak{p})\Big|
\]
is equivalent to that of handling
\[
\sideset{}{}{\sum}_{\mathrm{N}\mathfrak{m}\leq Q} \frac{h(\mathfrak{m})}{\phi_F(\mathfrak{m})}\max_{(\mathfrak{a},\mathfrak{m})=\cO_F} \max_{y\leq x}\Big|\sum_{\substack{\mathrm{N}\mathfrak{n}\leq y \\ \mathfrak{n}\equiv\mathfrak{a} \text{ in } \mathrm{Cl}^{+}(\mathfrak{m})}}\Lambda_F(\mathfrak{n})a_\pi(\mathfrak{n})\Big|
\]
with a harmless error, where $\Lambda_F(\mathfrak{n})a_\pi(\mathfrak{n})$ is the coefficient of {$-\frac{L'}{L}(s,\pi)$.} 
We derive a generalized Vaughan identity, which gives an expression for $\Lambda_F(\mathfrak{n})a_\pi(\mathfrak{n})$,  and then apply it to decompose the above object into the Type I sum
\begin{equation}\label{linear-sum}
\sum_{\mathrm{N}\mathfrak{m}\leq Q}\frac{h(\mathfrak{m})}{\phi_F(\mathfrak{m})}\max_{(\mathfrak{a},\mathfrak{m})=\cO_F}\max_{y\leq x}\Big|\sum_{\substack{\mathrm{N}\mathfrak{n}\leq y \\ \mathfrak{n}\equiv \mathfrak{a} \text{ in } \mathrm{Cl}^{+}(\mathfrak{m})}}\lambda_\pi(\mathfrak{n})\Big|
\end{equation}
and the Type II sum
\begin{equation}\label{bilinear-sum}
\sum_{\mathrm{N}\mathfrak{m}\leq Q}\frac{h(\mathfrak{m})}{\phi_F(\mathfrak{m})} \max_{(\mathfrak{a},\mathfrak{m})=\cO_F} \max_{y\leq x} \Big| {\underset{\mathrm{N}\mathfrak{ln}\leq y\atop \mathfrak{ln}\equiv\mathfrak{a} \text{ in } \mathrm{Cl}^{+}(\mathfrak{m})}{\sum_{\mathrm{N}\mathfrak{l}\leq L}\sum_{\mathrm{N}\mathfrak{n}\leq N}}}a(\mathfrak{l})b(\mathfrak{n}) \Big|.
\end{equation}
Note that \eqref{bilinear-sum} is actually bilinear form with $L, N$ 
{in suitable ranges}, and $a(\mathfrak{n}), b(\mathfrak{n})$ are arithmetic functions related to $\pi$.

In our setting, a strong bound for \eqref{linear-sum} is already new.  Since it is useful in contexts beyond that of Theorem \ref{thm 1.1}, we state it as its own theorem. 

\begin{theorem}\label{thm-bv-integers}
	Fix $\pi\in\mathfrak{F}_n$.  If $A>0$, $B=2^{n[F:\Q]/4}(2A+16)+2n-5$, and $\eta=\max\{\frac{n}{2},2\}$, then
	\begin{equation*}
		\sideset{}{}{\sum}_{\mathrm{N}\mathfrak{m}\leq Q}\frac{h(\mathfrak{m})}{\phi_F(\mathfrak{m})}\max_{(\mathfrak{a},\mathfrak{m})=\cO_F}\max_{y\leq x}\Big|\sum_{\substack{\mathrm{N}\mathfrak{n}\leq y \\ \mathfrak{n}\equiv \mathfrak{a} \text{ in } \mathrm{Cl}^{+}(\mathfrak{m})}}\lambda_\pi(\mathfrak{n})\Big|\ll_{\pi} \frac{x}{(\log x)^A},
	\end{equation*}
	where $x\geq 2$ and $Q=x^{\frac{1}{\eta}}(\log x)^{-B}$.
\end{theorem}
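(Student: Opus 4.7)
The plan is to use orthogonality of Hecke characters on $\mathrm{Cl}^+(\fm)$ to convert the congruence condition into sums twisted by primitive Hecke characters, then split the modulus range at $P=(\log x)^D$ and treat small and large moduli separately. Via the identity $\mathbf{1}_{[\kn]=[\ka]\,\text{in}\,\mathrm{Cl}^+(\fm)} = h(\fm)^{-1}\sum_{\chi}\bar\chi(\ka)\chi(\kn)$, the weight $h(\fm)/\phi_F(\fm)$ exactly cancels the $h(\fm)$ from orthogonality, leaving $1/\phi_F(\fm)$; decomposing each character modulo $\fm$ as the lift of a primitive character modulo some $\fd\mid\fm$ and collapsing the divisor sum in $\fm/\fd$ reduces the problem to bounding
\[
\sum_{\N\fd\leq Q}\frac{\log Q}{\phi_F(\fd)}\sum_{\chi \pmod{\fd}}^{*}\max_{y\leq x}\Big|\sum_{\N\kn\leq y}\lambda_\pi(\kn)\chi(\kn)\Big|,
\]
with the principal character contribution bounded directly via the prime number theorem for $L(s,\pi)$ together with a coprimality inclusion-exclusion at the primes dividing $\fd$.

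In the small range $\N\fd\leq P$, the required Siegel--Walfisz bound $\sum_{\N\kn\leq y}\lambda_\pi(\kn)\chi(\kn)\ll_A y\exp(-c_A\sqrt{\log y})$ is obtained by a Perron contour shift past a zero-free region for $L(s,\pi\otimes\chi)$.  A Hoffstein--Ramakrishnan/Goldfeld-type argument controls all zeros in a standard region except possibly a single real Siegel-type zero, which arises only when $\pi\otimes\chi$ is self-dual; Theorem \ref{thm:siegel} then yields the ineffective lower bound $|L(1,\pi\otimes\chi)|\gg_\epsilon \N\fd^{-\epsilon}$, pushing any such exceptional zero sufficiently far from $s=1$. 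The degenerate case where $\pi$ is itself a quadratic twist of itself, so that Theorem \ref{thm:siegel} does not directly apply, is reduced by automorphic induction, factoring $L(s,\pi)$ through an $L$-function of lower degree over the associated quadratic extension and iterating if necessary.  The quantitative dependence on $\epsilon$ in Siegel's argument, together with the degree factor in the analytic conductor of $\pi\otimes\chi$, is what produces the $2^{n[F:\Q]/4}$ in the exponent $B$.

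In the complementary range $P<\N\fd\leq Q$, I apply a truncated Perron formula, shift the contour to $\Re s=\tfrac{1}{2}$, and dominate the averaged vertical integral by Cauchy--Schwarz followed by a hybrid mean-value estimate of the shape
\[
\sum_{\N\fd\leq Q}\frac{1}{\phi_F(\fd)}\sum_{\chi \pmod{\fd}}^{*}\int_{-T}^{T}\big|L(\tfrac{1}{2}+it,\pi\otimes\chi)\big|^2\,dt \ll \big(Q^{1+n/2}+Q\,T^{n/2}\big)(\log QT)^{O(1)},
\]
obtained by combining the convexity bound on the critical line with a large-sieve inequality for the Dirichlet coefficients $\lambda_\pi\chi$. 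Balancing the resulting contribution against the target $x/(\log x)^A$ yields the threshold $Q = x^{1/\eta}(\log x)^{-B}$ with $\eta=\max\{n/2,2\}$; the $Q^2$ term from the large sieve over Hecke characters forces $\eta\geq 2$ regardless of $n$, while the $\N\fd^{n/4}$ convexity bound forces $\eta\geq n/2$ once $n\geq 4$.

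The main obstacle is establishing the mean-value inequality above with full uniformity in both $\N\fd$ and $T$ over \emph{primitive} Hecke characters of narrow ray class groups. This requires upgrading the classical large sieve from Dirichlet characters to Hecke characters over $F$ (carefully handling the unit group, which is precisely what the weight $h(\fm)/\phi_F(\fm)$ accounts for, cf.\ \cite{Huxley-1971}) and coupling it with sharp second-moment estimates for automorphic $L$-functions on the critical line; the rest of the argument is then a balancing exercise in $T$, $Q$, and $P$.
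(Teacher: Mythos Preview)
Your high-level decomposition (orthogonality of ray-class characters, reduction to primitive characters, contour shift to $\Re s=\tfrac12$, mean-value estimate for $|L(\tfrac12+it,\pi\otimes\chi)|^2$) is the right skeleton, and indeed matches the paper's approach. But two parts of your proposal are genuinely wrong, not merely different.

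\textbf{The small-modulus treatment is misconceived.} You invoke zero-free regions and Theorem~\ref{thm:siegel} to get a Siegel--Walfisz bound for $\sum_{\N\kn\le y}\lambda_\pi(\kn)\chi(\kn)$. But zeros of $L(s,\pi\otimes\chi)$ are irrelevant here: when you shift the contour of $\tfrac{1}{2\pi i}\int L(s,\pi\otimes\chi)\,y^s\,\tfrac{ds}{s}$, only \emph{poles} of the integrand produce residues, and $L(s,\pi\otimes\chi)$ is entire for cuspidal $\pi$. There is no main term to subtract and no exceptional zero to repel. A power-saving bound $\sum_{\N\kn\le y}\lambda_\pi(\kn)\chi(\kn)\ll y^{1-\delta}$ follows directly from convexity on the critical line (the paper does exactly this in Section~9.2, with no appeal to Theorem~\ref{thm:siegel}). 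Consequently there is no need for a small/large modulus split at all, and your discussion of automorphic induction for the case $\pi\cong\pi\otimes\nu$ is moot: Theorem~\ref{thm-bv-integers} is stated for all $\pi\in\mathfrak F_n$, not only $\mathfrak F_n^\flat$, precisely because no Siegel-type input is required.

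\textbf{The source of the factor $2^{n[F:\Q]/4}$ is misidentified.} It does not arise from the $\epsilon$-dependence in Siegel's argument. In the paper it comes from an entirely analytic device: one first proves the result for the Riesz-smoothed sum $\psi_\rho(y)=\sum_{\N\kn\le y}\lambda_\pi(\kn)(1-\N\kn/y)^\rho$ with $\rho=\lfloor n[F:\Q]/4\rfloor+1$, the smoothing giving the Mellin kernel decay $|s|^{-\rho-1}$ needed to absorb the factor $(3+|t|)^{n[F:\Q]/4}$ from the second-moment bound (Proposition~\ref{prop 4.3}). One then removes the smoothing by Gallagher's iteration (Lemma~7.1 $\Rightarrow$ Theorem~\ref{thm-bv-integers}), and each of the $\rho$ iterations halves the exponent of the logarithmic saving, producing the factor $2^\rho\asymp 2^{n[F:\Q]/4}$ in $B$.

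Your truncated-Perron approach could in principle be made to work for the large-modulus range, but you would still need to (i) control the Perron truncation error uniformly in $\chi$, (ii) compensate for the slow $|s|^{-1}$ decay on the critical line (this is exactly why the paper smooths), and (iii) handle the $\max_{y\le x}$, which you have not addressed. The smoothing-plus-unsmoothing route avoids all three issues simultaneously.
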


\begin{remark}
	The value of $\eta$ in Theorem \ref{thm 1.1} is totally determined by the value of $\eta$ in Theorem \ref{thm-bv-integers}. We improve previous results because we notice that more cancellation can be obtained by summing over the modulus, which is absent in the work of \cite{JL-2020,Vaughan-1980}. In fact, Vaughan \cite{Vaughan-1980} directly used the P\'olya-Vinogradov inequality, and the first two authors \cite{JL-2020} used the Vorono\"i formula on ${\rm GL}(n)$ to treat the sum of $\lambda_\pi(\mathfrak{n})$ over a single arithmetic progression.
\end{remark}


We estimate \eqref{bilinear-sum} through bilinear sum methods, proving a general result similar to  \cite[Theorem 17.4]{IK-2004}. However, the important condition of \cite[Theorem 17.4]{IK-2004} is that one of these two arithmetic functions $a(\mathfrak{n})$ and $b(\mathfrak{n})$ satisfies a Siegel--Walfisz hypothesis. In our situation, we need to verify that both $\lambda_\pi(\mathfrak{n})$ and $\Lambda_F(\mathfrak{n})a_\pi(\mathfrak{n})$ satisfy a Siegel--Walfisz hypothesis.  This hypothesis is straightforward to verify for $\lambda_{\pi}(\kn)$, and as mentioned above, we verify this hypothesis for $\Lambda_F(\mathfrak{n})a_\pi(\mathfrak{n})$ as a corollary of Theorem \ref{thm:siegel}. Note that there is a cumbersome cut-off condition in \eqref{bilinear-sum}. To handle this, we adopt a trick of Vaughan in \cite{Vaughan-1980}.

Our upper bound for \eqref{bilinear-sum} involves the second moments of some arithmetic functions of length $x$, whose magnitudes need to be of order $O(x(\log x)^c )$ for some computable constant $c$.  If GRC holds for $\pi$, then the desired upper bound follows from elementary estimate of divisor functions. In \cite{JL-2020}, the first two authors bounded these arithmetic functions under Hypothesis H of Rudnick and Sarnak \cite{RS-1996}. This mild conjecture is implied by GRC and is only known to hold for few cases.  In order to circumvent this additional assumption, we instead bound them by some Dirichlet convolutions of $\lambda_{\pi \times \tilde{\pi}}(\mathfrak{n})$ through the dual Pieri rule and a combinatorial lemma of Soundrarajan. The desired upper bound then follows from the Rankin--Selberg theory. 


As in the classical case, Theorem \ref{thm 1.1} is a fruitful result. As an application, we will give one analogue of Titchmarsh's divisor problem on $\operatorname{GL}_n$ over $\QQ$ with $2\leq n\leq 4$. {Let $d(m)$ be the usual divisor function.}  It is known that $d(m)$ are Fourier coefficients of $\frac{\partial}{\partial s}E(z,s)$ at $s=\frac{1}{2}$, where $E(z,s)$ is the Eisenstein series for $\operatorname{SL}_2(\ZZ)$. Thus, the following result may be also viewed as the shifted convolution sum at primes for $\operatorname{GL}_n\times \operatorname{GL}_2$.

\begin{corollary}\label{cor-titchmarsh}
Let $2\leq n\leq 4$, and fix $\pi\in\mathfrak{F}_n^{\flat}$.  If $x\geq 2$ and $\pi$ is defined over $\mathbb{Q}$, then
	\begin{equation*}\label{shifted}
	\sum_{p\leq x}\lambda_\pi(p)d(p-1)\ll_\pi \frac{x(\log\log x)^{\frac{3}{2}}}{\sqrt{\log x}},
	\end{equation*}	
	where the implied constant depends on $\pi$.
\end{corollary}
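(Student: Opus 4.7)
The plan is to apply Dirichlet's hyperbola method to $d(p-1)$ and then split the resulting sum over divisors at the level of distribution furnished by Theorem \ref{thm 1.1}.

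First, I would write
\[
d(p-1) = 2\sum_{\substack{d\mid p-1\\ d<\sqrt{p-1}}} 1 + \mathbf{1}[p-1\text{ is a perfect square}]
\]
and interchange summations, reducing to bounding $\sum_{d\leq\sqrt{x-1}} S_d^*(x)$, where $S_d^*(x)$ is the sum of $\lambda_\pi(p)$ over primes $p\leq x$ with $p\equiv 1\pmod{d}$ and $p>d^2$. The contribution from primes $p$ with $p-1$ a perfect square is at most $O(\sqrt{x}\cdot x^\theta) = o(x/\log x)$ using any nontrivial bound $|\lambda_\pi(p)|\ll p^\theta$ with $\theta<1/2$, available unconditionally for $n\in\{2,3,4\}$ by Kim--Sarnak and Luo--Rudnick--Sarnak. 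Then I would set $Q = x^{1/2}(\log x)^{-B}$ with $B$ the exponent in Theorem \ref{thm 1.1} for a suitably large $A$ (valid since $\eta = 2$ for $n\leq 4$), and split into $d\leq Q$ and $Q<d\leq\sqrt{x}$. Because $F=\mathbb{Q}$ forces $h(q)/\phi(q)=1$, Theorem \ref{thm 1.1} applied with $\mathfrak{a}=1$ immediately yields $\sum_{d\leq Q}|S_d^*(x)|\ll_A x/(\log x)^A$, negligible relative to the target.

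The crux is the medium range $Q<d\leq\sqrt{x}$, which I would handle via Cauchy--Schwarz on the dual form. After interchanging summation,
\[
\Big|\sum_{Q<d\leq\sqrt{x}}S_d^*(x)\Big| \leq \sum_{p\leq x}|\lambda_\pi(p)|\,N(p-1) \leq \Big(\sum_{p\leq x}|\lambda_\pi(p)|^2\Big)^{1/2}\Big(\sum_{p\leq x}N(p-1)^2\Big)^{1/2},
\]
where $N(m):=\#\{d : Q<d\leq\sqrt{x},\ d\mid m\}$. The first factor is $O(\sqrt{x})$ by the unconditional Rankin--Selberg bound $\sum_{n\leq x}|\lambda_\pi(n)|^2\ll x$. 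For the second factor, expanding the square and applying Brun--Titchmarsh in each arithmetic progression modulo $\operatorname{lcm}(d_1,d_2)$ reduces the problem to controlling $\sum_{d_1,d_2}\bigl(\phi(\operatorname{lcm}(d_1,d_2))\log(x/\operatorname{lcm}(d_1,d_2))\bigr)^{-1}$. Using the identity $\phi(\operatorname{lcm})\phi(\gcd)=\phi(d_1)\phi(d_2)$, splitting on $g=\gcd(d_1,d_2)$, and applying the standard estimate $\sum_{d\leq X}1/\phi(d)\sim C\log X$ across the window of log-width $\log(\sqrt{x}/Q)=B\log\log x$, one obtains a bound of order $x(\log\log x)^{3}/\log x$ for the second moment, and hence the target bound $O(x(\log\log x)^{3/2}/\sqrt{\log x})$ after Cauchy--Schwarz.

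The main obstacle I anticipate is the careful dissection of the second-moment estimate, because Brun--Titchmarsh degenerates when $\operatorname{lcm}(d_1,d_2)$ approaches $x$. The $(d_1,d_2)$-sum must therefore be split according to whether $\operatorname{lcm}(d_1,d_2)$ lies below $\sqrt{x}$, in the intermediate range, or near $x$, with the trivial bound $\pi(x;\ell,1)\leq 1+x/\ell$ used in the last regime. The constraint in that regime forces $d_1, d_2$ into a very short window near $\sqrt{x}$ with small $\gcd$, so that contribution stays subdominant. The factor $(\log\log x)^{3/2}$ in the final bound is what naturally emerges from the log-log volume of the medium range entering under the square root.
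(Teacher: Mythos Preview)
Your setup (hyperbola method, Theorem~\ref{thm 1.1} for $d\leq Q$) agrees with the paper. The divergence is in the medium range $Q<d\leq\sqrt{x}$: the paper does \emph{not} use Brun--Titchmarsh. Instead it detects the congruence $p\equiv 1\pmod r$ by additive characters, writes the resulting sum as $\sum_{l}\frac{1}{l}\sum_{q}\frac{1}{q}\sideset{}{^*}\sum_{a}e(-a/q)\sum_{p}\lambda_\pi(p)e(ap/q)$, and splits on $l$. For small $l$ one applies Cauchy--Schwarz together with the additive large sieve and the bound $\sum_{p\leq x}|\lambda_\pi(p)|^2\ll x/\log x$; for large $l$ one evaluates the Ramanujan sum and feeds the result back into Theorem~\ref{thm 1.1}. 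This avoids any analysis of $\operatorname{lcm}(d_1,d_2)$ altogether and produces the exponent $(\log\log x)^{3/2}$ directly.

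Your approach is workable but the bookkeeping in your sketch is off. First, with the crude bound $\sum_{p\leq x}|\lambda_\pi(p)|^2\leq\sum_{n\leq x}|\lambda_\pi(n)|^2\ll x$ for the first factor, you would need the second moment $\sum_p N(p-1)^2\ll x(\log\log x)^3/\log x$, and that is too strong: the pairs $(d_1,d_2)$ with $\gcd(d_1,d_2)=1$ and $d_1 d_2>x/2$ force $d_1,d_2\in(\sqrt{x}/2,\sqrt{x}]$, which is \emph{not} a short window (it contains $\asymp\sqrt{x}$ integers), and each such pair contributes $\pi(x;d_1d_2,1)\in\{0,1\}$. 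After interchanging summation this block alone gives $\sum_{p\leq x}\#\{(a,b):ab=p-1,\ a,b\in(\sqrt{x}/2,\sqrt{x}],\ (a,b)=1\}\leq\sum_{p\leq x}2^{\omega(p-1)}\ll x$ (by Brun--Titchmarsh applied to $\sum_p\tau(p-1)$), which is much larger than $x(\log\log x)^3/\log x$. Second, splitting on $g=\gcd(d_1,d_2)$ and using $\log(x/\ell)\geq\log g$ for $g\geq 2$ together with $\sum_{d\equiv 0\,(g),\,Q<d\leq\sqrt{x}}1/\phi(d)\ll(\log\log x)/\phi(g)$ gives the $g\geq 2$ contribution $\ll x(\log\log x)^2\sum_{g\geq 2}1/(\phi(g)\log g)\ll x(\log\log x)^3$. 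So the honest second-moment bound is $O(x(\log\log x)^3)$, and to recover the target you must use the sharper first factor $\bigl(\sum_{p\leq x}|\lambda_\pi(p)|^2\bigr)^{1/2}\ll(x/\log x)^{1/2}$, which follows from \eqref{coeffpair} and \eqref{PNT-RS}. With these corrections your route does give $x(\log\log x)^{3/2}/\sqrt{\log x}$, matching the paper, but the large-$\ell$ regime requires a genuinely separate argument rather than the ``short window'' heuristic.
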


\begin{remark}
	The case with $n=2$ is known by the work of Acharya \cite{Acharya-2018}. Under GRC, the first two authors \cite{JL-2020} handled the cases with $n=2,3$ and obtained a stronger upper bound than that in Corollary \ref{cor-titchmarsh}.
\end{remark}

{If we use Theorem \ref{thm-bv-integers} instead of Theorem \ref{thm 1.1}, then the argument leading to Corollary \ref{cor-titchmarsh} produces a corresponding shifted convolution bound over the integers.}
\begin{corollary}\label{cor-shifted}
Let $2\leq n\leq 4$, and fix $\pi\in\mathfrak{F}_n^{\flat}$.  If $x\geq 2$ and $\pi$ is defined over $\mathbb{Q}$, then
\begin{equation*}\label{shifted-2}
\sum_{m\leq x}\lambda_\pi(m)d(m-1)\ll_\pi x(\log\log x)^{\frac{3}{2}},
\end{equation*}	
where the implied constant depends on $\pi$.
\end{corollary}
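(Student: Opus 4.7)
The plan is to adapt the proof of Corollary~\ref{cor-titchmarsh} step by step, substituting Theorem~\ref{thm-bv-integers} for Theorem~\ref{thm 1.1} in the underlying Bombieri--Vinogradov step. The overall architecture is the same: open up $d(m-1)$ by Dirichlet's hyperbola method, apply the Bombieri--Vinogradov estimate in the small-modulus range, and bound the complementary large-modulus range by elementary means. The loss of the factor $(\log x)^{-1/2}$ present in Corollary~\ref{cor-titchmarsh} reflects the absence of a Brun--Titchmarsh type saving that was available for primes but is unavailable for integers, which accounts precisely for the slightly weaker bound $O(x(\log\log x)^{3/2})$ here.

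First I would write
\[
\sum_{m\leq x}\lambda_\pi(m)\,d(m-1) \;=\; 2\sum_{d\leq\sqrt{x-1}}\;\sum_{\substack{m\leq x\\ m\equiv 1\pmod{d}\\ m>d^2}}\lambda_\pi(m) \;-\; \sum_{c\leq\sqrt{x-1}}\lambda_\pi(c^2+1),
\]
where the error term from perfect squares is $O_\pi(x^{3/4})$ by Cauchy--Schwarz together with the Rankin--Selberg bound $\sum_{m\leq x}|\lambda_\pi(m)|^2\ll_\pi x$, and is therefore negligible. Next I would split the outer $d$-sum at $Q:=x^{1/2}(\log x)^{-B}$, where $B=B(A)$ is chosen large enough that Theorem~\ref{thm-bv-integers} applies with $\eta=\max\{n/2,2\}=2$ (valid for $n\leq 4$) and, over $\mathbb{Q}$ (where the weight $h(\mathfrak{m})/\phi_F(\mathfrak{m})$ equals $1$), yields $\sum_{d\leq Q}\max_{y\leq x}\bigl|\sum_{m\leq y,\,m\equiv 1\pmod{d}}\lambda_\pi(m)\bigr|\ll_A x(\log x)^{-A}$. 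Since the inner sum with the condition $m>d^2$ is the difference of two sums of the above maxed form, the $d\leq Q$ contribution is $O_A(x(\log x)^{-A})$, which is absorbed into the final bound.

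The substantive work lies in the tail range $Q<d\leq\sqrt{x}$, which is beyond the level of distribution of Theorem~\ref{thm-bv-integers}. Here I would exchange the order of summation to express the tail as $\sum_{m\leq x}|\lambda_\pi(m)|\cdot r(m)$, where $r(m):=\#\{d:Q<d\leq\sqrt{m-1},\,d\mid m-1\}$ counts divisors of $m-1$ in the short logarithmic window $(Q,\sqrt{x}]$. A Cauchy--Schwarz estimate combined with the Rankin--Selberg bound $\sum_{m\leq x}|\lambda_\pi(m)|^2\ll_\pi x$ (coming from the analytic properties of $L(s,\pi\times\tilde\pi)$ at $s=1$) and a Shiu-type second-moment estimate $\sum_{m\leq x}r(m)^2\ll x(\log\log x)^{3}$, which exploits that the window has logarithmic width $\log(\sqrt{x}/Q)\ll\log\log x$, then delivers the desired bound $O(x(\log\log x)^{3/2})$.

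The main obstacle will be securing the correct logarithmic dependence in the second-moment estimate for $r(m)$ against $|\lambda_\pi(m)|^2$, that is, a Shiu-type bound in which the power of $\log\log x$ is tracked carefully in place of $\log x$; once this is in hand, the conclusion follows by a routine Cauchy--Schwarz step combined with the Rankin--Selberg bound already available in the paper.
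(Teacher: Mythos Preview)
Your approach diverges from the paper's in the tail range $Q<d\leq\sqrt{x}$, and that divergence is fatal. The paper handles this range exactly as in the proof of Corollary~\ref{cor-titchmarsh}: detect the congruence $m\equiv 1\pmod r$ with additive characters, write $r=\ell q$, and split into $T_1$ (small $\ell$) and $T_2$ (large $\ell$). For $T_1$, Cauchy--Schwarz together with the additive large sieve and the Rankin--Selberg bound $\sum_{m\leq x}|\lambda_\pi(m)|^2\ll x$ gives $T_1\ll x(\log\log x)^{3/2}$; for $T_2$, the Ramanujan-sum identity feeds back into Theorem~\ref{thm-bv-integers}. The lost factor $(\log x)^{-1/2}$ relative to Corollary~\ref{cor-titchmarsh} comes precisely from $\sum_{m\leq x}|\lambda_\pi(m)|^2\asymp x$ versus $\sum_{p\leq x}|\lambda_\pi(p)|^2\asymp x/\log x$.

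Your alternative --- bounding the tail by $\sum_{m\leq x}|\lambda_\pi(m)|\,r(m)$ and applying Cauchy--Schwarz against a second moment of $r(m)$ --- fails because the claimed estimate $\sum_{m\leq x}r(m)^2\ll x(\log\log x)^3$ is false. A direct computation shows
\[
\sum_{n\leq x}\Big(\sum_{\substack{d\mid n\\ Q<d\leq\sqrt x}}1\Big)^2
=\sum_{Q<d_1,d_2\leq\sqrt x}\Big\lfloor\frac{x}{[d_1,d_2]}\Big\rfloor
\asymp x\sum_{g\leq\sqrt x}\frac{\phi(g)}{g^2}\Big(\sum_{Q/g<a\leq\sqrt x/g}\frac 1a\Big)^2
\asymp x(\log x)(\log\log x)^2,
\]
since the inner $a$-sum is $\asymp\log\log x$ while $\sum_{g\leq Q}\phi(g)/g^2\asymp\log x$. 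With the correct second moment, Cauchy--Schwarz yields only $x(\log x)^{1/2}\log\log x$, missing the target by $(\log x)^{1/2}$. No ``Shiu-type'' input rescues this: Shiu's theorem controls multiplicative functions over short intervals in $n$, not divisor counts in short multiplicative windows, and the factor $\log x$ above is genuine. You need the additive-character and large-sieve route of Corollary~\ref{cor-titchmarsh}, with Theorem~\ref{thm-bv-integers} in place of Theorem~\ref{thm 1.1}.
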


 Finally, we show that Corollary \ref{cor-titchmarsh} and Corollary \ref{cor-shifted} do in fact provide non-trivial estimates. We first recall an elementary result (see \cite[page 937]{JL-2020}, for example)
\begin{equation}\label{phi-sum}
	\sum_{q \leq x} \frac{1}{\varphi(q)}=\frac{\zeta(2) \zeta(3)}{\zeta(6)} \log x+O(1).
\end{equation}
Assuming the Riemann hypothesis for all of the twisted $L$-functions $L(s,\pi\times(\tilde{\pi}\otimes\chi))$ as well as GRC, it follows from \cite[Theorem 5.15]{IK-2004} that
\[
\sum_{\substack{p\leq x\\  p\equiv 1\bmod q}}|\lambda_\pi(p)|^2\log p=\frac{x}{\varphi(q)}+O(x^{\frac{1}{2}}(\log qx)^2).
\]
We average over the modulus $q\leq x^{1/3}$ and obtain from \eqref{phi-sum} and partial summation that
\[
\sum_{q\leq x^{1/3}}\sum_{\substack{p\leq x\\  p\equiv 1\bmod q}}|\lambda_\pi(p)|^2\asymp x.
\]
One can easily verify that $|\lambda_\pi(p)|\gg  |\lambda_\pi(p)|^2$ under GRC, so the above estimate gives
\[
\sum_{p\leq x}|\lambda_\pi(p)|d(p-1)\gg \sum_{p\leq x}|\lambda_\pi(p)|^2 d(p-1)\gg \sum_{q\leq x^{1/3}}\sum_{\substack{p\leq x\\  p\equiv 1\bmod q}}|\lambda_\pi(p)|^2\gg x.
\]
This means that there exists some cancellation in the sequence $\{\lambda_\pi(p)d(p-1)\}$, where $p$ runs over all primes.  

For Corollary \ref{cor-shifted}, we argue as follows.  Firstly, we recall an interesting result in \cite{TT-1998}: ``\emph{Let a multiplicative function $f(m)\geq 0$ satisfy the following conditions: (i) $f(m)\geq 0$; $f(p^r)\leq A^r $ for some $A>0$; (ii) $f(m) \ll m^{\varepsilon}$ for any $\varepsilon>0$; (iii) $\sum_{p \leq x} f(p) \log p \geq \alpha x$ with some $\alpha>0$, then one has the asymptotic formula
	\begin{equation*}\label{MTinTT-1998}
		\sum_{m \leq x} f(m) d(m-1)=C_f \sum_{m \leq x} f(m) \log
		x(1+o(1))
	\end{equation*}
	for some constant $C_f$ depending on $f$.}" Next, suppose that GRC holds, we then obtain from \cite[p. 595]{JLW-2020} that
\begin{equation}\label{sum-primelog}
	\sum_{p \leq x}\left|\lambda_{\pi}(p)\right| \log p \geq\Big(\frac{1}{m}+o(1)\Big) x,
\end{equation}
and
\begin{equation}\label{lowerboundinteger}
\sum_{m \leq x}\left|\lambda_{\pi}(m)\right| \gg \frac{x}{(\log x)^{1-\frac{1}{n}}}.
\end{equation}
One can easily check that with the help of \eqref{sum-primelog}, the above conditions (i)-(iii) hold for $f(m)=|\lambda_\pi(m)|$ under GRC. Hence, we could get
\begin{equation}\label{MTinTT-1998-2}
\sum_{m \leq x} |\lambda_\pi(m)| d(m-1)=C \sum_{m \leq x}|\lambda_\pi(m)| \log x(1+o(1))
\end{equation}
for some constant $C$ depending on $\pi$. Combining \eqref{lowerboundinteger} with \eqref{MTinTT-1998-2}, we have
\[
\sum_{m \leq x} |\lambda_\pi(m)| d(m-1)\gg x(\log x)^{\frac{1}{n}},
\]
which implies that Corollary \ref{shifted-2} do give a non-trivial upper bound.

\section{Properties of $L$-functions}

\subsection{Conventions}

For a parameter $\delta$, we use the notation $f\ll_{\delta} g$ or $f=O_{\delta}(g)$ to denote that there exists a constant $c\geq 0$, depending at most on $\delta$ and $\pi$, such that such that $|f|\leq cg$ in a range that will be clear in context.

\subsection{Automorphic $L$-functions}

Let $F$ be a number field with discriminant $D_F$ and $d=[F: \mathbb{Q}]$. Let $\cO_F$ be
the ring of integers in $F$. For each place $v$ of $F$, denote by $F_v$ the completion of
$F$ with respect to $v$ and by $O_v$ the local ring of integers. The prime ideals $\kp\subseteq \cO_F$ and the nonarchimedean places $v$ are in bijective correspondence.  So we may write $\mathfrak{p}$ interchangeably with nonarchimedean places.  
Each $\pi \in \mathfrak{F}_n$ is a restricted tensor product $\bigotimes_{v} \pi_v$ of smooth admissible representations of $\GL_n(F_v)$ such that $\pi_v$ is unramified for almost all finite places $v$. Let $\mathfrak{q}_\pi$ be the conductor of $\pi$, which {has the property that $\pi_{\kp}$ is ramified if and only if $\kp|\kq_{\pi}$.}

For each {prime ideal} 
$\mathfrak{p}$, the standard local $L$-function is defined in terms of Satake parameters $A_\pi(\mathfrak{p})=\{\alpha_{1,\pi}(\mathfrak{p}),\dots,\alpha_{n,\pi}(\mathfrak{p})\}$ by
\begin{equation}\label{pi-euler}
	L(s,\pi_{\mathfrak{p}}):=\prod_{j=1}^n(1-\alpha_{j,\pi}(\mathfrak{p})\mathrm{N}\mathfrak{p}^{-s})^{-1},
\end{equation}
where $\mathrm{N}=\mathrm{N}_{F/\mathbb{Q}}$ is the absolute norm over $\Q$. For $\mathfrak{p}\nmid\mathfrak{q}_{\pi}$, we have $\alpha_{j,\pi}(\mathfrak{p})\neq0$ for all $i\in\{1, \ldots ,n\}$. However, it might be the case that $\alpha_{i,\pi}(\mathfrak{p})=0$ for some $j$ when $\mathfrak{p}|\mathfrak{q}_{\pi}$. The standard (finite) $L$-function is defined to be
\begin{equation}\label{pi-eu-2}
	L(s,\pi)=\prod_{\mathfrak{p}}L(s,\pi_{\mathfrak{p}}):=\prod_{\mathfrak{p}}\sum_{k=0}^{\infty}\frac{\lambda_{\pi}(\mathfrak{p}^k)}{\mathrm{N}\mathfrak{p}^{ks}}=\sum_{\mathfrak{n}\subset \cO_F}\frac{\lambda_{\pi}(\mathfrak{n})}{{\mathrm{N}\mathfrak{n}}^s}
\end{equation}
for $\Re s>1$, where the product is over all prime ideals $\mathfrak{p}$ and the sum is over all integral ideals $\mathfrak{n}$. We can see that $\lambda_{\pi}(\mathfrak{n})$ is multiplicative, that is $\lambda_{\pi}(\mathfrak{n}_1\mathfrak{n}_2)=\lambda_{\pi}(\mathfrak{n}_1)\lambda_{\pi}(\mathfrak{n}_2)$ for coprime integral ideals $\mathfrak{n}_1$ and $\mathfrak{n}_2$. We can also write $\lambda_{\pi}(\mathfrak{n})$ in terms of Satake parameters
\begin{equation*}
	\lambda_{\pi}(\mathfrak{p}^k)=\sum_{m_1+\cdots +m_n=k}\prod_{j=1}^n\alpha_{j , \pi}^{m_j}(\mathfrak{p})
\end{equation*}
and extend it to all integral ideals $\mathfrak{n}$ by multiplicativity. Taking logarithmic derivatives in \eqref{pi-eu-2}, we can see that for $\Re s>1$,
\begin{equation*}\label{logderL}
	-\frac{L^{\prime}}{L}(s,\pi)=\sum_{\mathfrak{p}}\sum_{k=1}^\infty \frac{a_{\pi}(\mathfrak{p}^k)\log \mathrm{N}\mathfrak{p}}{{\mathrm{N}\mathfrak{p}}^{ks}}=\sum_{\mathfrak{n}\subset \cO_F}\frac{\Lambda_F(\mathfrak{n})a_\pi(\mathfrak{n})}{{\mathrm{N}\mathfrak{n}}^s},
\end{equation*}
where
\begin{equation*}
\Lambda_F(\mathfrak{n}):=\left\{\begin{aligned}
&\log \mathrm{N}\mathfrak{p}  \;\;\text{ if }\mathfrak{n}=\mathfrak{p}^k \text{ for some }k\in\mathbb{N}, \\
&0 \,\;\;\;\quad\quad  \text{ otherwise,}
\end{aligned}
\right.
\end{equation*}
and $a_{\pi}(\mathfrak{p}^k)=\sum_{j=1}^n\alpha_{j , \pi}(\mathfrak{p})^k$ . We set $a_\pi(\mathfrak{n})=0$ if $\mathfrak{n}$ is not a prime ideal power. Note that $a_{\pi}(\mathfrak{p})=\lambda_{\pi}(\mathfrak{p})$. We write $\mu_\pi(\mathfrak{n})$ to be the coefficients of Dirichlet series $L(s,\pi)^{-1}$, namely
\begin{equation}\label{inv-lfun}
	L(s,\pi)^{-1}=\sum_{\mathfrak{n}\subset \cO_F}\frac{\mu_\pi(\mathfrak{n})}{{\mathrm{N}\mathfrak{n}}^s}
\end{equation}
for $\Re s>1$. Then it can be is given by
\begin{equation}\label{mu-def}
	\mu_\pi(\mathfrak{n}) = \begin{dcases}
		0 &\mbox{if $\mathfrak{p}^{n+1}|\mathfrak{n}$ for some prime $\mathfrak{p}$,}\\
		\prod\limits_{\substack{\mathfrak{p}^\ell \| \mathfrak{n} \\ \ell\leq n}} (-1)^\ell\sum\limits_{1\leq j_1<\cdots<j_\ell \leq n}
		\alpha_{j_1,\pi}(\mathfrak{p})\cdots \alpha_{j_\ell,\pi}(\mathfrak{p}) &\mbox{otherwise.}
	\end{dcases}
\end{equation}
Clearly, $\mu_\pi(\mathfrak{n})$ is multiplicative.

Now suppose $v$ is an archimedean place of $F$ {(denoted $v|\infty$)}, so $F_v=\mathbb{R}$ or $\mathbb{C}$. Denote $\Gamma(s)$ to be the usual gamma function and define
\[
\Gamma_{v}(s):=\left\{\begin{array}{ll}
	\pi^{-s / 2} \Gamma(s / 2) & \text { if } F_{v}=\mathbb{R}, \\
	2(2 \pi)^{-s} \Gamma(s) & \text { if } F_{v}=\mathbb{C}.
\end{array}\right.
\]
{For each archimedean place $v$, there exists $n$ Langlands parameters $\mu_{1,\pi}(v),\ldots,\mu_{n,\pi}(v)$ from which we define}
\begin{equation*}\label{ArchiL}
	L(s,\pi_v)=\prod_{j=1}^n\Gamma_v(s+\mu_{j,\pi}(v)).
\end{equation*}
If we denote
\[
L_{\infty}(s,\pi)=\prod_{v|\infty}L(s,\pi_v),
\]
then {for nontrivial $\pi$}, the complete $L$-function defined by
\begin{equation*}\label{completeL}
	\Lambda(s,\pi)=(D_F^n\mathrm{N}\mathfrak{q}_{\pi})^{\frac{s}{2}}L(s,\pi)L_{\infty}(s,\pi)
\end{equation*}
{extends} to an entire function of order 1 and is bounded in the vertical strip.
{Luo, Rudnick, and Sarnak \cite{LRS-1999} and M{\"u}ller and Speh \cite{MS-2004} proved that there exists $\theta_n\in[0,\frac{1}{2}-\frac{1}{n^2+1}]$ such that we have the uniform bounds}
\begin{equation}\label{boundcoeff}
	|\alpha_{j , \pi}(\mathfrak{p})| \leq {\mathrm{N}\mathfrak{p}}^{\theta_{n}} \quad \text { and } \quad -\operatorname{Re}(\mu_{j, \pi}(v)) \leq\theta_{n}.
\end{equation}
The generalized Ramanujan conjecture (GRC) predicts that $\theta_{n}=0$.

We denote by $\tilde{\pi}$ the contragradient representation of $\pi$ which is also an irreducible cuspidal automorphic representation with unitary central character. One can show that $\mathfrak{q}_{\tilde{\pi}}=\mathfrak{q}_{\pi}$. We can also define the $L$-function associated to $\tilde{\pi}$ in the same fashion. 
{We have the equalities of sets} $\{\bar{\alpha_{j , \pi}(\mathfrak{p})}\}_{j=1}^n=\{\alpha_{j, \tilde\pi}(\mathfrak{p})\}_{j=1}^n$ and $\{\bar{\mu_{j,\pi}(v)}\}_{j=1}^n=\{\mu_{j,\tilde\pi}(v)\}_{j=1}^n$.  There exists a complex number $\varepsilon(\pi)$ of modulus 1 such that
\begin{equation*}\label{FE}
	\Lambda(s,\pi)=\varepsilon(\pi)\Lambda(1-s,\tilde\pi).
\end{equation*}

Now we define the analytic conductor of $\pi$. We write $q(\pi):=D_F^n\mathrm{N}\mathfrak{q}_{\pi}$ for the arithmetic conductor, and the analytic conductor is defined by
\begin{equation*}\label{conductor}
	C(\pi,t):=q(\pi)\prod_{v|\infty}\prod_{j=1}^n(3+|it+\mu_{j,\pi}(v)|^{d_v}):=q(\pi)q_{\infty}(\pi,t),
\end{equation*}
where $d_v=1$ if $F_v=\R$ and $d_v=2$ if $F_v=\mathbb{C}$ 
This is an important parameter to describe $L(s,\pi)$. For example, the convexity bound, the zero-free region and second moment estimates can be described in terms of analytic conductor (see sections below).

\vskip 2mm

\subsection{Rankin--Selberg $L$-functions}
Let $\pi=\bigotimes_{v} \pi_{v} \in \mathfrak{F}_n$ and $\pi^{\prime}=\bigotimes_{v} \pi_{v}^\prime \in \mathfrak{F}_{n'}$. The Rankin--Selberg $L$-function at a finite place $\mathfrak{p}$ is defined to be
\begin{equation}\label{rs-ep}
	L(s, \pi_{\mathfrak{p}} \times \pi_{\mathfrak{p}}^{\prime})=\prod_{j=1}^{n} \prod_{j'=1}^{n^{\prime}}(1-\alpha_{j,j',\pi \times \pi^{\prime}}(\mathfrak{p}) \mathrm{N}\mathfrak{p}^{-s})^{-1},
\end{equation}
where $\alpha_{j,j',\pi \times \pi^{\prime}}(\mathfrak{p})$ are suitable complex numbers. For a finite place $\mathfrak{p}$ such that either $\pi_{\mathfrak{p}}$ or $\pi_{\mathfrak{p}}^{\prime}$ is unramified, we have the equality of sets $\{\alpha_{j,j',\pi \times \pi^{\prime}}(\mathfrak{p})\}=\{\alpha_{j,\pi}(\mathfrak{p}) \alpha_{j',\pi^{\prime}}(\mathfrak{p})\}$. We also define the (finite) Rankin--Selberg $L$-function to be
\begin{equation}\label{pipi-eu-2}
	L(s,\pi\times\pi^{\prime})=\prod_{\mathfrak{p}}L(s,\pi_{\kp}\times\pi_{\kp}'):=\prod_{\mathfrak{p}}\sum_{k=0}^{\infty}\frac{\lambda_{\pi\times\pi^{\prime}}(\mathfrak{p}^k)}{\mathrm{N}\mathfrak{p}^{ks}}=\sum_{\mathfrak{n}\subset \cO_F}\frac{\lambda_{\pi\times\pi^{\prime}}(\mathfrak{n})}{{\mathrm{N}\mathfrak{n}}^s}
\end{equation}
for $\Re s>1$, where the product is over all prime ideals $\mathfrak{p}$ and the sum is over nonzero integral ideals $\mathfrak{n}$.

For each archimedean place $v$, the local $L$-factor at $v$ is
\[
L(s, \pi_{v} \times \pi_{v}^{\prime})=\prod_{j=1}^{n} \prod_{j'=1}^{n^{\prime}} \Gamma_{v}(s+\mu_{j,j',\pi \times \pi^{\prime}}(v))
\]
for suitable complex numbers $\mu_{j,j',\pi \times \pi^{\prime}}(v)$. Define
\begin{equation*}\label{ArchiRSL}
	L_{\infty}(s,\pi\times\pi^{\prime})=\prod_{v|\infty}L(s,\pi_v\times\pi_{v}^{\prime}).
\end{equation*}
When $v$ is a place such that both $\pi_{v}$ and $\pi_v^{\prime}$ are unramified, then {we have the equality of sets} $\{\mu_{j,j',\pi \times \pi^{\prime}}(v)\}=\{\mu_{j,\pi}(v)+\mu_{j',\pi^{\prime}}(v)\}$.
By our normalization of the central characters, we have $L(s,\pi\times\pi')$ has a pole at $s=1$ with order $r_{\pi\times\pi'}=1$  if and only if $\pi'\simeq\tilde{\pi}$, and $r_{\pi\times\pi'}=0$ otherwise.  We can also associate an arithmetic conductor $q(\pi\times\pi^\prime)$ to $\pi\times\pi^\prime$, so the complete Rankin--Selberg $L$-function is defined by
\begin{equation*}\label{completeRSL}
	\Lambda(s,\pi\times\pi^{\prime}):=(s(s-1))^{r_{\pi\times\pi'}}q(\pi\times\pi^{\prime})^{\frac{s}{2}}L(s,\pi\times\pi^{\prime})L_{\infty}(s,\pi\times\pi^{\prime}).
\end{equation*}
{It is entire of order 1 and satisfies the following functional equation}
\begin{equation*}\label{RSFE}
	\Lambda(s,\pi\times\pi^{\prime})=\varepsilon(\pi\times\pi^{\prime})\Lambda(1-s,\tilde\pi\times\tilde{\pi}^{\prime}),
\end{equation*}
where $\varepsilon(\pi\times\pi^{\prime})$ is a complex number of modulus 1.  It follows from the explicit description of the numbers $\alpha_{j,j',\pi\times\pi'}(\kp)$ and $\mu_{j,j',\pi\times\pi'}(v)$ in \cite{ST-2019} and \cite[Appendix]{ST-2019} yields the bounds
\begin{equation}\label{boundcoeff2}
	|\alpha_{j,j',\pi\times\pi'}(\mathfrak{p})| \leq {\mathrm{N}\mathfrak{p}}^{\theta_{n}+\theta_{n'}} \quad \text { and } \quad -\operatorname{Re}(\mu_{j,j', \pi\times\pi'}(v)) \leq\theta_{n}+\theta_{n'}.
\end{equation}
We also define the analytic conductor $C(\pi\times\pi^\prime,t)$ by
\begin{equation*}\label{condutorRS}
	C(\pi\times\pi^\prime,t)=q(\pi\times\pi^\prime)\prod_{v|\infty}\prod_{j=1}^n\prod_{j=1}^{n^\prime}(3+|it+\mu_{i,j,\pi\times\pi^\prime}(v)|^{d_v}):=q(\pi\times\pi^\prime)q_{\infty}(\pi\times\pi^\prime,t)
\end{equation*}
for $d_v$ as above. An important inequality about conductors (see \cite{Brumley-2006}) is
\begin{equation}\label{conductorpair}
	C(\pi\times\pi^\prime,t)\ll C(\pi,0)^{n^{\prime}}C(\pi^\prime,0)^n(3+|t|)^{nn^{\prime}d}.
\end{equation}

We are especially interested in the case where $\pi'=\tilde{\pi}$.  In this case the Rankin--Selberg $L$-function $L(s,\pi\times\tilde{\pi})$ has non-negative Dirichlet coefficients $\lambda_{\pi\times\tilde{\pi}}(\mathfrak{n})$ (see Lemma \ref{ineq-2rs} for instance).  Moreover, $L(s,\pi\times\tilde{\pi})$ extends to the complex plane with a simple pole at $s = 1$.  Hence, it follows from a standard Tauberian argument that
\begin{equation}\label{asymRS}
	\sum_{\mathrm{N}\mathfrak{n}\leq x}\lambda_{\pi\times\tilde{\pi}}(\mathfrak{n})\sim x \mathop{\mathrm{Res}}_{s=1}L(s,\pi\times\tilde{\pi})\ll x.
\end{equation}

\subsection{$\mathrm{GL}_1$-twists}
\label{subsec:GL1twists}

Let $F$ be a number field. By a modulus $m$ of $F$, we mean a function
\begin{equation*}
	m:\{\text{all places of }F\}\rightarrow \mathbb{Z}
\end{equation*}
such that
\begin{enumerate}
	\item for all nonarchimedean places $v$, we have $m(v)\geq 0$, with $m(v)=0$ for all but finitely many $v$.
	\item if $v$ is a real archimedean place, then $m(v)=0$ or 1.
	\item if $v$ is a complex archimedean palce, then $m(v)=0$.
\end{enumerate}
For a modulus $m$, we write
\[
U_m(v):=\begin{cases}
	(F_{v}^\times)^{m(v)+1}&\mbox{if $v$ is archimedean,}\\
	1+v^{m(v)} &\mbox{if $v$ is nonarchimedean and $m(v)\neq 0$,} \\
	O_v^\times & \mbox{if $v$ is nonarchimedean and $m(v)=0$.}
\end{cases}
\]
Thus, in each case, $U_m(v)$ is a neighbourhood of 1 in $F_{v}^\times$. Note that $m(v)=0$ for all but finitely many nonarchimedean $v$, so
\[
U_m:=\prod_{v}U_m(v)
\]
is an open subset of the idele group $\mathbb{A}_F^\times$, where the product is over all places $v$ of $F$. For any modulus $m$, we can define the ray class group modulo $m$ to be $\mathrm{Cl}(m):=\mathbb{A}_F^\times/F^\times U_m$. By a narrow class group modulo an integral ideal $\mathfrak{a}$, we mean that it is defined by the modulus
\begin{equation*}
m_{\mathfrak{a}}(v):=\begin{cases}
\mathrm{ord}_{v}(\mathfrak{a}) &\mbox{if $v$ is nonarchimedean}, \\
1 &\mbox{if $v$ is real archimedean,} \\
0  &\mbox{if $v$ is complex archimedean.}
\end{cases}
\end{equation*}
where $\mathrm{ord}_{v}$ is the additive valuation with respect to $v$. We define $\mathrm{Cl}^{+}(\mathfrak{a}):=\mathbb{A}_F^\times/F^\times U_{m_{\mathfrak{a}}}$, which is a finite group, and whose cardinality is denoted by $h(\mathfrak{a})$. Later we may also use $\mathfrak{a}$ to denote this modulus for the simplicity of  notation. If $(\mathfrak{b},\mathfrak{a})=\cO_F$, one can use the map $\mathfrak{b}\mapsto \prod_{\mathfrak{p}}\varpi_{\mathfrak{p}}^{\mathrm{ord}_\mathfrak{p}(\mathfrak{b})}\times \prod_{v|\infty}1\mod F^\times U_{m_{\mathfrak{a}}}$ to projects $\mathfrak{b}$ to $\mathrm{Cl}^{+}(\mathfrak{a})$, where $\varpi_{\mathfrak{p}}$ is any fixed choice of uniformizer in $F_\mathfrak{p}$. So by ``$\mathfrak{b}\equiv\mathfrak{c}$ in $\mathrm{Cl}^{+}(\mathfrak{a})$'', we mean that both $\mathfrak{b}$ and $\mathfrak{c}$ are coprime with $\mathfrak{a}$ and they have the same image under this map.

One may also define the ray class group in terms of ideals. Let $J_F$ be the group of fractional ideals in $F$. If $S$ is a finite set of prime ideals in $F$, we denote by $J_F^S$ the subgroup of $J$ generated by the prime ideals not in $S$. Define
\[
F^S=\{x\in F: (x)\in J_F^S\}=\{x\in F: v_\mathfrak{p}(x)=0 \text{ for all finite }\mathfrak{p}\in S\}.
\]
Given a modulus $m$, we denote by $F_{m,1}$ the set consisting of elements $a\in F^\times$ satisfying
$$
\Big\{\begin{aligned}
	\mathrm{ord}_{v}(a-1) & \geq m(v) & & \,\, \text{all nonarchimedean $v$ with $m(v)>0$,} \\
	a_v &>0 & & \text { all real archimedean $v$ with $m(v)>0$,}
\end{aligned}
$$
where $a_v$ is the image of $a$ in $F_v$.
If $S(m)=\{\text{prime ideals } \mathfrak{p}\colon m(\mathfrak{p})>0\}$, then the ray class group modulo $m$ can also be defined by $J^{S(m)}/F_{m,1}$. By \cite[Theorem 1.7, Chapter 5]{milneCFT}, we have the following exact sequence
\begin{equation}\label{exactseq}
	0\rightarrow \cO_F^\times/(\cO_F^\times\cap F_{m,1})\rightarrow F^{S(m)}/F_{m,1}\rightarrow \mathrm{Cl}(m)\rightarrow \mathrm{Cl}_F\rightarrow 0,
\end{equation}
where $\mathrm{Cl}_F$ is the class group of $F$. Moreover, we have the following isomorphism
$$F^{S(m)}/F_{m,1}\simeq \prod_{\substack{v|\infty \text{ real}\\ m(v)>0}}\{\pm1\}\times\prod_{\substack{\mathfrak{p} \\ m(\kp)>0}}(\cO_F/\mathfrak{p}^{m(\mathfrak{p})})^\times.$$
As a result, if we define $\phi_F(\mathfrak{m})=\mathrm{N}\mathfrak{m}\prod_{\mathfrak{p}|\mathfrak{m}}(1-\frac{1}{\mathrm{N}\mathfrak{p}})$ to be Euler's totient function in $F$, then $$h(\mathfrak{m})=h\cdot2^r\cdot \phi_F(\mathfrak{m})\cdot |\cO_F^\times/(\cO_F^\times\cap F_{m_\mathfrak{m},1})|^{-1},$$ where $h$ is the class number of $F$ and $r$ is the number of real embeddings of $F$. One can show that $\phi_F(\mathfrak{m})\gg \mathrm{N}\mathfrak{m}/\log \mathrm{N}\mathfrak{m}$.

For any character $\chi$ on $\mathrm{Cl}^{+}(\mathfrak{a})$, there is a unitary Hecke character which is also denoted by $\chi=\prod_v\chi_v$ such that $\chi(\mathfrak{p})=\chi_{\mathfrak{p}}(\varpi_{\mathfrak{p}})$ if $\mathfrak{p}\nmid \mathfrak{q}_\chi$. One can see that the conductor of $\chi$ divides $\mathfrak{a}$. We say that $\chi$ is primitive modulo $\mathfrak{a}$ if $\mathfrak{q}_\chi=\mathfrak{a}$.  Now, for any $\pi \in \mathfrak{F}_n$, one has  $\pi\otimes \chi \in \mathfrak{F}_n$. 
By \cite{Cogdell-2008}, the standard $L$-function associated with $\pi\otimes\chi$ equals
\begin{equation*}\label{tw-lfunction}
	L(s,\pi\otimes\chi)=\sum_{\mathfrak{n}\subset \cO_F}\frac{\lambda_{\pi\otimes\chi}(\mathfrak{n})}{{\mathrm{N}\mathfrak{n}}^s}.
\end{equation*}
For a prime $\mathfrak{p}\nmid (\mathfrak{q}_\pi,\mathfrak{q}_\chi)$, we have $\{\alpha_{j,\pi\otimes\chi}(\mathfrak{p})\}=\{\alpha_{j,\pi}(\mathfrak{p})\alpha_{\chi}(\mathfrak{p})\}$. Recall that $\alpha_{\chi}(\mathfrak{p})=\chi_{\mathfrak{p}}(\varpi_{\mathfrak{p}})$ for any uniformizer $\varpi_{\mathfrak{p}}$ in $F_{\mathfrak{p}}$ if $\mathfrak{p}\nmid\mathfrak{q}_\chi$.
	We set $\chi(\mathfrak{p})=0$ for $\mathfrak{p|\mathfrak{q}_\chi}$, then by discussion above  we have 
	\begin{equation}\label{tw-coeff-deco}
	\lambda_{\pi\otimes\chi}(\mathfrak{n})=\lambda_{\pi}(\mathfrak{n})\chi(\mathfrak{n})\quad \text{when } (\mathfrak{n},\mathfrak{q}_\chi)=\cO_F,
	\end{equation}
	and when $\chi$ is primitive, we have for $\re(s)>1$ the identity
	\begin{equation}\label{tw-lfunction-2}
	\sum_{\mathfrak{n}\subset \cO_F}\frac{\lambda_{\pi}(\mathfrak{n})\chi(\mathfrak{n})}{{\mathrm{N}\mathfrak{n}}^s}=\prod_{\mathfrak{p}}\prod_{j=1}^m\Big(1-\frac{\alpha_{j,\pi}(\mathfrak{p})\chi(\mathfrak{p})}{\N\mathfrak{p}^{s}}\Big)^{-1}=L(s, \pi\otimes \chi)\prod_{\mathfrak{p}|\mathfrak{q}_\chi}\prod_{j=1}^{n}\Big({1-\frac{\alpha_{j,\pi\otimes\chi}(\mathfrak{p}) }{\N\mathfrak{p}^{s}}}\Big).
	\end{equation}



\section{Preliminary reductions and a generalized Vaughan identity}

We will prove the following theorem.

\begin{theorem}
	\label{thm:prime_power_version}
Let $\pi\in\mathfrak{F}_n^{\flat}$, $\eta=\max\{2,\frac{n}{2}\}$, $A>0$, and $B=2^{n[F:\Q]/4}(6a+12n+54)$.  If $x\geq 3$ and $Q=x^{\frac{1}{\eta}}(\log x)^{-B}$, then
\[
\sum_{\mathrm{N}\mathfrak{m}\leq Q}\frac{h(\mathfrak{m})}{\phi_F(\mathfrak{m})}\max_{(\mathfrak{a},\mathfrak{m})=\cO_F}\max_{y\leq x}\Big|\sum_{\substack{\mathrm{N}\mathfrak{p}\leq y \\ \mathfrak{p}\equiv \mathfrak{a} \text{ in } \mathrm{Cl}^{+}(\mathfrak{m})}}a_\pi(\mathfrak{n})\Lambda_F(\kn)\Big|\ll_{A} \frac{x}{(\log x)^A}.
\]
\end{theorem}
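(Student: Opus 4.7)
The plan is to split the sum over $\mathfrak{m}$ at a logarithmic threshold, to handle small moduli via a Siegel--Walfisz estimate built from Theorem \ref{thm:siegel}, and to handle the remaining moduli through a generalized Vaughan decomposition that feeds Type~I sums into Theorem \ref{thm-bv-integers} and Type~II bilinear sums into a general bilinear estimate averaged over the modulus. Combining the two ranges with an appropriate balancing of Vaughan parameters will produce the value of $B$ asserted in the theorem; the value of $\eta$ is forced entirely by Theorem \ref{thm-bv-integers}.

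For small moduli $\N\mathfrak{m}\leq (\log x)^{D}$ with $D=D(A)$ sufficiently large, I would first prove the pointwise Siegel--Walfisz bound
\[
\max_{(\mathfrak{a},\mathfrak{m})=\cO_F}\max_{y\leq x}\Bigl|\sum_{\substack{\N\mathfrak{n}\leq y\\ \mathfrak{n}\equiv\mathfrak{a}\text{ in }\mathrm{Cl}^{+}(\mathfrak{m})}}\Lambda_F(\mathfrak{n})a_\pi(\mathfrak{n})\Bigr|\ll_{A,\pi}\frac{x}{(\log x)^{A+D+1}}
\]
by expanding the congruence through the Hecke characters $\chi$ modulo $\mathfrak{m}$ and treating each resulting sum by Perron-type contour integration of $-\tfrac{L'}{L}(s,\pi\otimes\chi)$ inside a standard zero-free region. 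The only genuine obstruction is a possible exceptional real zero attached to a primitive quadratic $\chi$, and this is precisely what Theorem \ref{thm:siegel} rules out under the hypothesis $\pi\in\mathfrak{F}_n^{\flat}$ (which forbids $\pi=\pi\otimes\chi$ for quadratic $\chi$, so Theorem \ref{thm:siegel} applies to every relevant twist). Summing this estimate trivially over $\N\mathfrak{m}\leq(\log x)^{D}$ loses at most a power of $\log x$, which is absorbed in $D$.

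For the remaining range $(\log x)^{D}<\N\mathfrak{m}\leq Q$, I would apply a generalized Vaughan identity to express $\Lambda_F(\mathfrak{n})a_\pi(\mathfrak{n})$ as a combination of Dirichlet convolutions of $\lambda_\pi$, $\mu_\pi$, $\Lambda_F a_\pi$, and the constant function $1$, truncated at parameters $U$ and $V$ which are small powers of $x$. After substitution and standard dyadic rearrangement, the weighted sum over $\mathfrak{m}$ decomposes into Type~I sums of the shape \eqref{linear-sum} and Type~II bilinear sums of the shape \eqref{bilinear-sum}. The Type~I contribution is bounded directly by Theorem \ref{thm-bv-integers}, which contributes the constraint $\eta=\max\{2,n/2\}$ and the exponent $B$ inherited from that theorem plus an absolute $O(1)$ loss from the Vaughan identity.

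The Type~II bilinear sums are the main obstacle and will be handled by a general estimate modeled on \cite[Theorem~17.4]{IK-2004}, adapted to the narrow ray class group: given that one of the two factor sequences satisfies a Siegel--Walfisz property and that both sequences have second moments controlled by $O(x(\log x)^{c})$, one obtains a saving of $(\log x)^{-A}$ on averaging over $\N\mathfrak{m}$ up to roughly $x^{1/2}$. Both sequences arising in the decomposition satisfy the Siegel--Walfisz property, the one involving $\Lambda_F a_\pi$ precisely by the small-modulus step above, and the one involving $\lambda_\pi$ by a direct contour argument. The real difficulty is the second-moment control in the absence of any bound toward Ramanujan: one cannot afford a pointwise divisor bound on $\lambda_\pi$ or $\Lambda_F a_\pi$. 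I would dominate these second moments by suitable Dirichlet convolutions of $\lambda_{\pi\times\tilde\pi}$ via the dual Pieri rule together with Soundararajan's combinatorial lemma, and then conclude by the Rankin--Selberg asymptotic \eqref{asymRS}, giving the required $O(x(\log x)^{c})$ bound unconditionally. Choosing $U$ and $V$ to balance the Type~I and Type~II losses against the cutoff $Q=x^{1/\eta}(\log x)^{-B}$ and adding back the small-modulus contribution then yields the stated bound with the claimed value of $B$.
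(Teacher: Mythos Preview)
Your proposal is essentially correct and matches the paper's approach: generalized Vaughan identity, Type~I sums via Theorem~\ref{thm-bv-integers}, Type~II bilinear sums via an averaged estimate requiring a Siegel--Walfisz hypothesis (verified for $\Lambda_F a_\pi$ through Theorem~\ref{thm:siegel} and for $\lambda_\pi$ by contour integration), and unconditional second-moment control via the dual Pieri rule, Soundararajan's lemma, and Rankin--Selberg theory.

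The only organizational difference is your initial split of the modulus range at $(\log x)^D$. The paper does not do this at the top level; instead, it applies Vaughan over the entire range $\N\mathfrak{m}\leq Q$ and buries the small-conductor treatment inside the proof of the bilinear lemma (its Lemma~\ref{lem-Vaughan}, where primitive characters of conductor $\leq R=(\log N)^A$ are handled separately using exactly the Siegel--Walfisz input). Your top-level split is therefore redundant---you still need the Siegel--Walfisz property inside the Type~II estimate, since the bilinear lemma must deal with primitive characters of all conductors regardless of the modulus restriction---but it is harmless and the substance of the argument is identical. One technical point you leave implicit is how to handle the cut-off $\N(\mathfrak{l}\mathfrak{n})\leq y$ with the extra $\max_{y\leq x}$ in the bilinear sums; the paper addresses this by Vaughan's Fourier-transform trick (its Lemma~\ref{max-lem-Vaughan}), which you should flag explicitly.
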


Assuming Theorem \ref{thm:prime_power_version}, we prove Theorem \ref{thm 1.1}.

\begin{proof}[Proof of Theorem \ref{thm 1.1}]
We will argue that Theorem \ref{thm:prime_power_version} implies that
\begin{equation}
\label{eqn:chebyshevversion}
\sum_{\mathrm{N}\mathfrak{m}\leq Q}\frac{h(\mathfrak{m})}{\phi_F(\mathfrak{m})}\max_{(\mathfrak{a},\mathfrak{m})=\cO_F}\max_{y\leq x}\Big|\sum_{\substack{\mathrm{N}\mathfrak{p}\leq y \\ \mathfrak{p}\equiv \mathfrak{a} \text{ in } \mathrm{Cl}^{+}(\mathfrak{m})}}\lambda_\pi(\mathfrak{p})\log\N\kp\Big|\ll_{A} \frac{x}{(\log x)^A}.
\end{equation}
The desired result will then follow by partial summation.  To see that Theorem \ref{thm:prime_power_version} implies \eqref{eqn:chebyshevversion}, note that if $\kp$ is a prime ideal, then $a_{\pi}(\kp)\Lambda_F(\kp)=\lambda_{\pi}(\kp)\log\N\kp$.  To estimate the contribution from higher powers of prime ideals, we observe that by \eqref{boundcoeff}, we have
\begin{align*}
\sum_{\substack{\mathrm{N}\mathfrak{p}^k\leq y \\ \mathfrak{p}^k\equiv\mathfrak{a} \text{ in } \mathrm{Cl}^{+}(\mathfrak{m}) \\ k\geq 2}}a_\pi(\mathfrak{p}^k)\log \mathrm{N}\mathfrak{p}
\ll_{\epsilon} y^{\theta_n+\varepsilon}\sum_{\substack{\mathrm{N}\mathfrak{p}^k\leq y \\ \mathfrak{p}^k\equiv\mathfrak{a} \text{ in } \mathrm{Cl}^{+}(\mathfrak{m}) \\ k\geq 2}}1.
\end{align*}
So the contribution of these terms to the average in Theorem \ref{thm:prime_power_version} is
\begin{align*}
&\ll\sum_{\mathrm{N}\mathfrak{m}\leq Q} \frac{h(\mathfrak{m})}{\phi_F(\mathfrak{m})}\max_{(\mathfrak{a},\mathfrak{m})=\cO_F} \max_{y\leq x}y^{\theta_n+\varepsilon}\sum_{\substack{\mathrm{N}\mathfrak{p}^k\leq y \\ \mathfrak{p}^k\equiv\mathfrak{a} \text{ in } \mathrm{Cl}^{+}(\mathfrak{m}) \\ k\geq 2}}1
\ll \sum_{\mathrm{N}\mathfrak{m}\leq Q} \max_{\mathrm{N}\mathfrak{a}\leq x}x^{\theta_n+\varepsilon}\sum_{\substack{\mathrm{N}\mathfrak{p}^k\leq x \\ \mathfrak{p}^k\equiv\mathfrak{a} \text{ in } \mathrm{Cl}^{+}(\mathfrak{m}) \\ k\geq 2}}1  \\
&\ll x^{\theta_n+\varepsilon}\max_{\mathrm{N}\mathfrak{a}\leq x}\sum_{\substack{\mathrm{N}\mathfrak{p}^k\leq x \\  k\geq 2}}\sum_{\substack{\mathrm{N}\mathfrak{m}\leq Q \\ \mathfrak{p}^k\equiv\mathfrak{a} \text{ in } \mathrm{Cl}^{+}(\mathfrak{m}) \\ (\mathfrak{m},\mathfrak{p}^k)=(\mathfrak{m},\mathfrak{a})=\cO_F }}1,
\end{align*}
by the convention that $\sum_{\mathfrak{b}\equiv\mathfrak{a} \text{ in } \mathrm{Cl}^{+}(\mathfrak{m})}1=0$ if $(\mathfrak{a},\mathfrak{m})\neq \cO_F$.  We now argue that
\begin{equation}
\label{6.1}
\max_{\N\ka\leq x}\max_{\substack{\N\kp^k\leq x \\ k\geq 2}}\sum_{\substack{\mathrm{N}\mathfrak{m}\leq Q \\ \mathfrak{p}^k\equiv\mathfrak{a} \text{ in } \mathrm{Cl}^{+}(\mathfrak{m}) \\ (\mathfrak{m},\mathfrak{p}^k)=(\mathfrak{m},\mathfrak{a})=\cO_F }}1\ll_{\epsilon}x^{\epsilon}.
\end{equation}
Once we establish this, it follows from the prime ideal theorem that
{\small
\begin{equation}\label{primetopower}
\sum_{\mathrm{N}\mathfrak{m}\leq Q} \frac{h(\mathfrak{m})}{\phi_F(\mathfrak{m})}\max_{(\mathfrak{a},\mathfrak{m})=\cO_F} \max_{y\leq x}\Big|\sum_{\substack{\mathrm{N}\mathfrak{n}\leq y \\ \mathfrak{n}\equiv\mathfrak{a} \text{ in } \mathrm{Cl}^{+}(\mathfrak{m})}}a_\pi(\mathfrak{n})\Lambda_F(\mathfrak{n})-\sum_{\substack{\mathrm{N}\mathfrak{p}\leq y \\ \mathfrak{p}\equiv\mathfrak{a} \text{ in } \mathrm{Cl}^{+}(\mathfrak{m})}}\lambda_\pi(\mathfrak{p})\log\N\kp\Big|\ll_{\epsilon}x^{\frac{1}{2}+\theta_n+\varepsilon}.\hspace{-1mm}
\end{equation}}%
Since $x^{\frac{1}{2}+\theta_n+\epsilon}\ll_A x(\log x)^{-A}$, this finishes the passage from powers of prime to prime ideals.

In order to prove \eqref{6.1}, we begin with the fact that if $\mathfrak{b}\equiv\mathfrak{a}$ in $\mathrm{Cl}^{+}(\mathfrak{m})$ with $(\mathfrak{m},\mathfrak{b})=(\mathfrak{m},\mathfrak{a})=\cO_F$ and $\mathrm{N}\mathfrak{b}$, $\mathrm{N}\mathfrak{a}\leq x$, then there exists $\omega\in F^\times$ such that $\omega$ is totally positive, $\kp|\mathfrak{m}$ implies $\mathrm{ord}_{\kp}(\omega-1)\geq \mathrm{ord}_{\kp}(\mathfrak{m})$, and $(\omega)=\kb \ka^{-1}$.  Let $h$ be the class number of $F$, then $\omega^h=ba^{-1}$ for some nonzero $a,b\in \cO_F$ with $\mathfrak{b}^h=(b)$ and $\mathfrak{a}^h=(a)$. Since $\mathrm{ord}_\mathfrak{p}(a)=0$ for $\mathfrak{p}|\mathfrak{m}$, $\mathrm{ord}_\mathfrak{p}(b-a)=\mathrm{ord}_\mathfrak{p}(b-a)-\mathrm{ord}_\mathfrak{p}(a)=\mathrm{ord}_\mathfrak{p}(\omega^h-1)=\mathrm{ord}_\mathfrak{p}(\omega-1)+\mathrm{ord}_\mathfrak{p}(\omega^{h-1}+\cdots +1)\geq\mathrm{ord}_\mathfrak{p}(\mathfrak{m})$ if $\mathfrak{p}|\mathfrak{m}$. Hence $\mathfrak{m}|(b-a)$. Recall that if $a\in F^\times$, then $\prod_v|a|_v=1$. As a result, $\prod_{v|\infty}|a|_v\leq x^h$ and $\prod_{v|\infty}|b|_v\leq x^h$. All of the conjugates of an algebraic integer are algebraic integers, so their absolute values have a uniform lower bound depending only on $F$. We can see from this fact that $|a|_v, |b|_v\ll x^h$ for all $v|\infty$. Hence $N(b-a)=\prod_{v|\infty}|b-a|_v\ll x^{h[F:\Q]}$. One can check that there are at most $\tau_{[F:\mathbb{Q}]}(m)$ integral ideals with norm $m$, where $\tau_{[F:\mathbb{Q}]}(m)$ is the $m$-th Dirichlet coefficient of $\zeta(s)^{[F:\Q]}$. Since $\tau_{[F:\Q]}(m)\ll_{\epsilon}m^{\epsilon}$, the innermost sum can be bounded by $\tau_{[F:\mathbb{Q}]}(\N(b-a))$, which is therefore $\ll_{\epsilon}x^\varepsilon$, as desired.
\end{proof}

Our proof of Theorem \ref{thm:prime_power_version} partially follows the approach in Chapter 9 of \cite{FI-2010}; see also \cite{Tao-2015}. As stated in \cite{FI-2010}, what we need is a combinatorial identity for sums over primes to produce a bilinear form to which the large sieve inequality can be applied. We choose to use a generalized version of  Vaughan identity.  Define
\begin{equation*}
M(s)=\sum_{\mathrm{N}\mathfrak{n}\leq X}\frac{\mu_\pi(\mathfrak{n})}{{\mathrm{N}\mathfrak{n}}^s},\qquad N(s)=\sum_{\mathrm{N}\mathfrak{n}\leq Y}\frac{\Lambda_F(\mathfrak{n})a_\pi(\mathfrak{n})}{{\mathrm{N}\mathfrak{n}}^s}.
\end{equation*}

\begin{lemma}\label{vaughan-hg}
	Let $X\geq 1$ and $Y\geq 1$.  If $\kn$ is an integral ideal with $\mathrm{N}\mathfrak{n}>Y$, then
	 we have
	\begin{equation*}
	\begin{aligned}
	\Lambda_F(\mathfrak{n})a_\pi(\mathfrak{n})&=\sum_{\substack{\mathfrak{n=ab}\\ \mathrm{N}\mathfrak{b}\leq X}} \mu_\pi(\mathfrak{b})\lambda_\pi(\mathfrak{a})\log \mathrm{N}\mathfrak{a}-{\underset{\substack{\mathfrak{n=abc}\\\mathrm{N}\mathfrak{b}\leq X,\mathrm{N}\mathfrak{c}\leq Y}}{\sum\sum}} \lambda_\pi(\mathfrak{a})\mu_\pi(\mathfrak{b})\Lambda_F(\mathfrak{c})a_\pi(\mathfrak{c})\\
	&+{\underset{\substack{\mathfrak{n=abc}\\ \mathrm{N}\mathfrak{b}>X,\mathrm{N}\mathfrak{c}> Y}}{\sum\sum}} \lambda_\pi(\mathfrak{a})\mu_\pi(\mathfrak{b})\Lambda_F(\mathfrak{c})a_\pi(\mathfrak{c}).
	\end{aligned}
	\end{equation*}
\end{lemma}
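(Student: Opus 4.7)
The plan is to deduce the identity from a single formal Dirichlet series decomposition and then extract the coefficient of $\N\mathfrak{n}^{-s}$ under the hypothesis $\N\mathfrak{n}>Y$. Writing $F(s)=-L'/L(s,\pi)$, I would first verify the algebraic identity
\begin{equation*}
F(s)\;=\;N(s)\;+\;M(s)\bigl(-L'(s,\pi)\bigr)\;-\;N(s)M(s)L(s,\pi)\;+\;\bigl(F(s)-N(s)\bigr)\bigl(1-M(s)L(s,\pi)\bigr)
\end{equation*}
as formal Dirichlet series. The check is routine: the $N$ contributions cancel against themselves, and the relation $F\cdot L=-L'$ (so $FML=-ML'$) collapses the rest to $F$. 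No hypothesis on $X$, $Y$, or $\mathfrak{n}$ is needed here.

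Next I would match coefficients of $\N\mathfrak{n}^{-s}$ on both sides. For $\N\mathfrak{n}>Y$, the left side contributes $\Lambda_F(\mathfrak{n})a_\pi(\mathfrak{n})$. On the right, $N(s)$ contributes nothing since it is supported on ideals of norm $\leq Y$. Using the Dirichlet series $-L'(s,\pi)=\sum_{\mathfrak{a}}\lambda_\pi(\mathfrak{a})(\log\N\mathfrak{a})\,\N\mathfrak{a}^{-s}$ together with the support condition $\N\mathfrak{b}\leq X$ on $M$, the piece $M(s)\bigl(-L'(s,\pi)\bigr)$ yields exactly the first displayed sum. Similarly, the triple convolution $-N(s)M(s)L(s,\pi)$ yields the second displayed sum, with the cutoffs $\N\mathfrak{b}\leq X$ and $\N\mathfrak{c}\leq Y$ arising directly from the supports of $M$ and $N$.

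For the last piece $(F-N)(1-ML)$ the key observation is that the identity $L(s,\pi)\cdot L(s,\pi)^{-1}=1$ forces $\sum_{\mathfrak{e}=\mathfrak{a}\mathfrak{b}}\mu_\pi(\mathfrak{b})\lambda_\pi(\mathfrak{a})=0$ for every $\mathfrak{e}\neq\cO_F$. Hence for such $\mathfrak{e}$ the coefficient of $\N\mathfrak{e}^{-s}$ in $1-M(s)L(s,\pi)$ equals
\begin{equation*}
-\sum_{\substack{\mathfrak{e}=\mathfrak{a}\mathfrak{b}\\ \N\mathfrak{b}\leq X}}\mu_\pi(\mathfrak{b})\lambda_\pi(\mathfrak{a})\;=\;\sum_{\substack{\mathfrak{e}=\mathfrak{a}\mathfrak{b}\\ \N\mathfrak{b}>X}}\mu_\pi(\mathfrak{b})\lambda_\pi(\mathfrak{a}),
\end{equation*}
while the constant coefficient vanishes since $\mu_\pi(\cO_F)=\lambda_\pi(\cO_F)=1$. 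Because $F-N$ is supported on ideals of norm $>Y$ and because $\N\mathfrak{b}>X\geq 1$ already forces $\mathfrak{e}=\mathfrak{a}\mathfrak{b}\neq\cO_F$, the convolution $(F-N)(1-ML)$ at $\N\mathfrak{n}^{-s}$ collapses precisely to the third displayed sum in the statement.

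I do not expect a substantive obstacle here: the argument is a direct transposition of Vaughan's classical identity derivation to the setting of $L(s,\pi)$, $\mu_\pi$, and $\Lambda_F\cdot a_\pi$, and it uses nothing about the Satake or archimedean parameters beyond the defining relations $L\cdot L^{-1}=1$ and $L'=-FL$. The one point of bookkeeping care is the flip in the final piece—using $L\cdot L^{-1}=1$ to convert the truncation $\N\mathfrak{b}\leq X$ inherent in $ML$ into the complementary condition $\N\mathfrak{b}>X$ demanded by the statement.
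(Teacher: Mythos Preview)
Your proof is correct and follows essentially the same approach as the paper: the paper writes the same Dirichlet series identity (in the equivalent form $\tfrac{L'}{L}=L'M+LMN+(\tfrac{L'}{L}+N)(1-LM)-N$) and then says ``identify the coefficients of $\N\kn^{-s}$'', while you spell out the coefficient extraction explicitly, including the flip from $\N\mathfrak{b}\leq X$ to $\N\mathfrak{b}>X$ via $L\cdot L^{-1}=1$.
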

\begin{proof}
	If $\re(s)>1$, then we have the identity
\begin{eqnarray*}
	\frac{L'}{L}(s, \pi)&=&L'(s, \pi)M(s)+ L(s, \pi)M(s)N(s)\\
	&&+\Big(\frac{L'}{L}(s, \pi)+N(s)\Big)\Big(1-L(s, \pi)M(s)\Big)-N(s).
\end{eqnarray*}
Once we identify the coefficients of $\N\kn^{-s}$ on each side, we obtain the desired result.
\end{proof}

We apply Lemma \ref{vaughan-hg} with $X=Y<y$ for the $L$-function $L(s, \pi)$ and find that
\[
\sum_{\substack{\mathrm{N}\mathfrak{n}\leq y \\ \mathfrak{n}\equiv\mathfrak{a} \text{ in } \mathrm{Cl}^{+}(\mathfrak{m})}}\Lambda_F(\mathfrak{n})a_\pi(\mathfrak{n})=S_1+S_2-S_3+S_4,
\]
where
\begin{align}
S_1 &:=\sum_{\substack{\mathrm{N}\mathfrak{n}\leq X \\ \mathfrak{n}\equiv\mathfrak{a} \text{ in } \mathrm{Cl}^{+}(\mathfrak{m})}}\Lambda_F(\mathfrak{n})a_\pi(\mathfrak{n}),  \\
S_2 &:= \sum_{\mathrm{N}\mathfrak{b}\leq X} \mu_\pi(\mathfrak{b}) \sum_{\substack{\mathrm{N}\mathfrak{c}\leq y/\mathrm{N}\mathfrak{b}\\ \mathfrak{c}\equiv\mathfrak{ab^{-1}} \text{ in } \mathrm{Cl}^{+}(\mathfrak{m})}}\lambda_\pi(\mathfrak{c})\log \mathrm{N}\mathfrak{c}, \\
S_3 &:= \sum_{\mathrm{N}\mathfrak{b}\leq X}  \sum_{\mathrm{N}\mathfrak{c}\leq X} \sum_{\substack{\mathrm{N}\mathfrak{d}\leq y/\mathrm{N}\mathfrak{bc}\\ \mathfrak{d}\equiv\mathfrak{a(bc)^{-1}} \text{ in } \mathrm{Cl}^{+}(\mathfrak{m})}}\lambda_\pi(\mathfrak{d})\mu_\pi(\mathfrak{b})\Lambda_F(\mathfrak{c})a_\pi(\mathfrak{c})\\ 
&=  \sum_{\substack{\mathrm{N}\mathfrak{n}\leq X^2}} \Big(\sum_{\substack{\mathfrak{bc=n}\\ \mathrm{N}\mathfrak{b}\leq X,\mathrm{N}\mathfrak{c}\leq X}}\mu_\pi(\mathfrak{b})\Lambda_F(\mathfrak{c})a_\pi(\mathfrak{c})\Big)\sum_{\substack{\mathrm{N}\mathfrak{d}\leq y/\mathrm{N}\mathfrak{n}\\ \mathfrak{d}\equiv\mathfrak{an^{-1}} \text{ in } \mathrm{Cl}^{+}(\mathfrak{m})}}\lambda_\pi(\mathfrak{d}),\notag\\
S_4 &:= \sum_{\mathrm{N}\mathfrak{b}>X}  \sum_{\mathrm{N}\mathfrak{c}> X}\sum_{\substack{\mathrm{N}\mathfrak{d}\leq y/\mathrm{N}\mathfrak{bc}\\ \mathfrak{d}\equiv\mathfrak{a(bc)^{-1}} \text{ in } \mathrm{Cl}^{+}(\mathfrak{m})}}\lambda_\pi(\mathfrak{d})\mu_\pi(\mathfrak{b})\Lambda_F(\mathfrak{c})a_\pi(\mathfrak{c})\\
&= \sum_{X<\substack{\mathrm{N}\mathfrak{n}<y/X}}\Big(\sum_{\substack{\mathfrak{bd=n}\\ \mathfrak{b}> X}}\mu_\pi(\mathfrak{b})\lambda_\pi(\mathfrak{d})\Big)
\sum_{\substack{X<\mathrm{N}\mathfrak{c} \leq y/\mathrm{N}\mathfrak{n}\\ \mathfrak{c}\equiv\mathfrak{an^{-1}} \text{ in } \mathrm{Cl}^{+}(\mathfrak{m})}} \Lambda_F(\mathfrak{c})a_\pi(\mathfrak{c}).\notag
\end{align}

After applying the identity with suitable parameters $X$ and $Y$, we are going to estimate sums involving $S_1$, $S_2$, $S_3$, and $S_4$.  Note that their definitions depend on $y$, $\pi$, $\mathfrak{a}$, and the parameter for truncation $X$ in Vaughan's identity. The estimates of $S_i$ rely on Theorem \ref{thm-bv-integers} and Lemma \ref{max-lem-Vaughan} (see below). We only demonstrate in this section the outline of the proof assuming Theorem \ref{thm-bv-integers} and Lemma \ref{max-lem-Vaughan}. We estimate the sum involving $S_2$ because it is the most typical one. That is, we need to estimate
\begin{equation*}
\sum_{\mathrm{N}\mathfrak{m}\leq Q} \frac{h(\mathfrak{m})}{\phi_F(\mathfrak{m})}\max_{(\mathfrak{a},\mathfrak{m})=\cO_F} \max_{y\leq x}|S_2|.
\end{equation*}
It behooves us to decompose $S_2$ as $S_2'+S_2''$, where
\[
S_2' := \sum_{\mathrm{N}\mathfrak{b}\leq H} \mu_\pi(\mathfrak{b}) \sum_{\substack{\mathrm{N}\mathfrak{c}\leq y/\mathrm{N}\mathfrak{b}\\ \mathfrak{c}\equiv\mathfrak{ab^{-1}} \text{ in } \mathrm{Cl}^{+}(\mathfrak{m})}}\lambda_\pi(\mathfrak{c})\log \mathrm{N}\mathfrak{c},\quad S_2'':=\sum_{H<\mathrm{N}\mathfrak{b}\leq X}\mu_\pi(\mathfrak{b}) \sum_{\substack{\mathrm{N}\mathfrak{c}\leq y/\mathrm{N}\mathfrak{b}\\ \mathfrak{c}\equiv\mathfrak{ab^{-1}} \text{ in } \mathrm{Cl}^{+}(\mathfrak{m})}}\lambda_\pi(\mathfrak{c})\log \mathrm{N}\mathfrak{c}
\]
for some parameter $H<X$. Theorem \ref{thm-bv-integers} and partial summation give that
\begin{equation*}
	\sum_{\mathrm{N}\mathfrak{m}\leq Q} \frac{h(\mathfrak{m})}{\phi_F(\mathfrak{m})}\max_{(\mathfrak{a},\mathfrak{m})=\cO_F} \max_{y\leq x}|S_2'|\ll\frac{Hx}{(\log x)^A}
\end{equation*}
for some $B$ depending on $A$. For $S_2''$, we use Lemma \ref{max-lem-Vaughan} and the remark below it to obtain
\begin{equation*}
		\sum_{\mathrm{N}\mathfrak{m}\leq Q} \frac{h(\mathfrak{m})}{\phi_F(\mathfrak{m})}\max_{(\mathfrak{a},\mathfrak{m})=\cO_F} \max_{y\leq x}|S_2''|\ll (\log xQ)^5 \Big(Q\sqrt{x}+\sqrt{Xx}+\frac{x}{\sqrt{H}}+\frac{x}{(\log x/X)^{A}}\Big).
\end{equation*}
Lemma \ref{max-lem-Vaughan} requires a Siegel--Walfisz condition for the sequence $\{\lambda_\pi(\mathfrak{c})\log \N\mathfrak{c}\}$, which we prove in Section 9.  Moreover, the result relies on the $\ell^2$-estimates of $\mu_\pi(\mathfrak{b})$ and $\lambda_\pi(\mathfrak{c})\log \mathrm{N}\mathfrak{c}$, which is also given in Section 9 based on the inequalities in Section 5. For the sum involving $S_3$ and $S_4$ can be treated similarly, but we also require a Siegel--Walfisz condition for the sequence $\{a_\pi(\mathfrak{c})\Lambda_F(\mathfrak{c})\}$ in $S_4$.  This condition, given by Corollary \ref{cor-SW-thm}, is proved in Section 4; it relies on Theorem \ref{thm:siegel}.

Now it remains to prove Theorem \ref{thm-bv-integers} and Lemma \ref{max-lem-Vaughan}, which are proved in Sections 7 and 8, respectively.  Note that Lemma \ref{max-lem-Vaughan} is not of the form as in \cite{FI-2010}. For the proof of Lemma 8.2, we employ the trick of Fourier transform as Vaughan did in \cite{Vaughan-1980}.  Sections 5 and 6 supply several important estimates for our proofs of Theorem \ref{thm-bv-integers} and Lemma \ref{max-lem-Vaughan}.

%

\section{Zero-free regions}
\label{sec:ZFR}

Let $\pi\in\mathfrak{F}_n$.  We let $\mathbbm{1}\in\mathfrak{F}_1$ denote the trivial representation, whose $L$-function is the Dedekind zeta function $\zeta_F(s)$.  Recall that $\mathfrak{F}_n^{\flat}\subseteq \mathfrak{F}_n$ is the subset consisting of $\pi\in\mathfrak{F}_n$ such that $\pi=\tilde{\pi}$ and $\pi\neq\pi\otimes\chi$ for all nontrivial quadratic primitive Hecke characters $\chi$.  In this section, we prove a zero-free region for $L(s,\pi\otimes\chi)$ which is comparable to that of Dirichlet characters, including the first unconditional $\N\kq$-aspect bound on a possible Landau--Siegel zero.  We then use this zero-free region along with standard contour integration techniques to prove an analogue of the Siegel--Walfisz theorem for the Dirichlet coefficients $\lambda_{\pi}(\kp)$.  We now present the main result of this section.

\begin{theorem}
\label{thm:zero-free_region}
	Let $Q\geq 3$ and $\pi\in\mathfrak{F}_{n}^{\flat}$.  There exists a constant $c_{\pi}>0$, depending effectively on $\pi$, such that for all primitive Hecke characters $\chi\pmod{\kq}$ with $\N\kq\leq Q$ with at most one exception, the $L$-function $L(s,\pi\otimes\chi)$ is nonzero in the region
	\[
	\re(s)\geq 1-\frac{c_{\pi}}{\log(Q(3+|\im(s)|))}.
	\]
	If the exceptional character $\chi_1$ exists, then
	\begin{itemize}
		\item $\chi_1$ is quadratic.
		\item $L(s,\chi_1)$ has exactly one zero $\beta_1$ in this region, and $\beta_1$ is both real and simple.
		\item For all $\epsilon>0$, there exists an ineffective constant $c_{\pi}(\epsilon)>0$ such that $\beta_1\leq 1-c_{\pi}(\epsilon)Q^{-\epsilon}$.
	\end{itemize}
\end{theorem}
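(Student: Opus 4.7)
The plan is to adapt the classical Hoffstein--Ramakrishnan positivity method to this automorphic setting.  For a primitive Hecke character $\chi\pmod{\kq}$, the first step is to form the isobaric sum
\[
\Pi_\chi := \mathbbm{1}\boxplus\chi\boxplus\pi\boxplus(\pi\otimes\chi),
\]
whose standard $L$-function is $\zeta_F(s)L(s,\chi)L(s,\pi)L(s,\pi\otimes\chi)$.  The Rankin--Selberg $L$-function $L(s,\Pi_\chi\times\tilde{\Pi}_\chi)$ has nonnegative Dirichlet coefficients (this is the positivity of coefficients for a Rankin--Selberg square) and factors into sixteen cross-terms.  The hypothesis $\pi\in\mathfrak{F}_n^{\flat}$, namely $\pi=\tilde\pi$ together with $\pi\neq\pi\otimes\nu$ for every nontrivial primitive quadratic $\nu$, controls precisely which of these factors pick up a pole at $s=1$, and it also ensures that the factor $L(s,\pi\otimes\chi)$ (or its conjugate) appears in the factorization with multiplicity at least one, so any zero of $L(s,\pi\otimes\chi)$ near $s=1$ contributes positively to $-L'/L(s,\Pi_\chi\times\tilde{\Pi}_\chi)$.

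With this auxiliary function in hand, the next step is the standard de la Vall\'ee Poussin contour argument: test the positivity of $-L'/L(\sigma+it,\Pi_\chi\times\tilde{\Pi}_\chi)$ against the convexity bound on $\re(s)=1$ supplied by the analytic conductor estimate \eqref{conductorpair}, which is polynomial in $Q$ and $(3+|t|)$.  Choosing $\sigma = 1+\delta/\log(Q(3+|t|))$ for a suitable small $\delta>0$ and balancing the pole contribution against that of a hypothetical zero of $L(s,\pi\otimes\chi)$ near $s=1$ yields the claimed zero-free region of width $c_\pi/\log(Q(3+|\im(s)|))$, valid for every $\chi$ with at most one exception.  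A parallel computation with two hypothetical zeros simultaneously --- two close real zeros of a single $L(s,\pi\otimes\chi)$, or close real zeros of $L(s,\pi\otimes\chi)$ and of $L(s,\pi\otimes\chi')$ --- then forces any exceptional zero $\beta_1$ to be real and simple and coming from a single exceptional $\chi_1$.  To see that $\chi_1$ is quadratic, observe that if $\chi_1\neq\bar{\chi}_1$ then any real zero of $L(s,\pi\otimes\chi_1)$ is also a real zero of $L(s,\pi\otimes\bar{\chi}_1)$, effectively doubling the negative contribution in the positivity computation and contradicting the pole order absorbable by $L(s,\Pi_{\chi_1}\times\tilde{\Pi}_{\chi_1})$.

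The most delicate step, and the only ineffective one, is the Siegel-type inequality $\beta_1\leq 1-c_\pi(\epsilon)Q^{-\epsilon}$.  All of the preceding arguments are effective in the pair $(\pi,\chi)$, so the passage to a bound polynomial in the level $\N\kq$ must come from external input.  This input is exactly Theorem \ref{thm:siegel}, which applies here because $\chi_1$ is quadratic and $\pi\neq\pi\otimes\chi_1$ by the definition of $\mathfrak{F}_n^{\flat}$.  Converting the resulting lower bound $|L(1,\pi\otimes\chi_1)|\gg_\epsilon\N\kq^{-\epsilon}$ into the desired upper bound on $\beta_1$ is a routine mean-value argument: integrate $L'/L(s,\pi\otimes\chi_1)$ from $\beta_1$ to $1$, using the effective zero-free region just proved to control $L'/L$ on that short real segment.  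The main obstacle in the whole argument is therefore not the positivity machinery, which is standard, but pinning down the pole structure of $L(s,\Pi_\chi\times\tilde{\Pi}_\chi)$ with the right multiplicities in the various cases $\chi=\bar\chi$ versus $\chi\neq\bar\chi$, so that every zero one wishes to rule out contributes more to $-L'/L$ than the handful of simple poles at $s=1$ can offset.
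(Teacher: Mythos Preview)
Your proposal is essentially correct and reaches the same conclusion as the paper, but the packaging differs.  The paper does \emph{not} rederive the de la Vall\'ee Poussin region or the Page-type uniqueness from scratch: it simply invokes Lemma~\ref{lem:ZFR} (citing \cite[Theorem~A.1]{HB-2019} and \cite[Theorem~A.1]{Lapid}) for the individual zero-free region and the simplicity/reality of any exceptional zero, and Lemma~\ref{lem:Page} (citing \cite[Theorem~A]{HR-1995}) for the uniqueness of the exceptional character among $\N\kq\leq Q$.  The only new ingredient is the Siegel-type bound, which the paper obtains exactly as you do: Theorem~\ref{thm:siegel} gives $|L(1,\pi\otimes\chi_1)|\gg_\epsilon \N\kq^{-\epsilon}$, and Corollary~\ref{cor:siegel_zero} converts this to $\beta_1\leq 1-c_\pi(\epsilon)\N\kq^{-\epsilon}$ via the mean value theorem.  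What you sketch with the isobaric sum $\Pi_\chi$ and Rankin--Selberg positivity is precisely the engine behind the cited results of Hoffstein--Ramakrishnan and Humphries--Brumley, so your approach is a self-contained expansion of what the paper black-boxes.

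One small imprecision: you propose to ``integrate $L'/L(s,\pi\otimes\chi_1)$ from $\beta_1$ to $1$,'' but $L'/L$ has a pole at $\beta_1$, so this integral diverges.  The paper (and the standard argument) instead applies the mean value theorem directly to $L(s,\pi\otimes\chi_1)$ on $[\beta_1,1]$: since $L(\beta_1,\pi\otimes\chi_1)=0$, there is $\sigma\in[\beta_1,1]$ with $|L(1,\pi\otimes\chi_1)|=|L'(\sigma,\pi\otimes\chi_1)|(1-\beta_1)$, and one then bounds $|L'(\sigma,\pi\otimes\chi_1)|\ll_\epsilon \N\kq^{\epsilon}$ via \cite[Corollary~6]{Lixian-2010}.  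This is a cosmetic fix; the underlying idea is the same.
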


\subsection{Preliminaries for the zero-free region}

We begin with a standard zero-free region.

\begin{lemma}
	\label{lem:ZFR}
	Let $\pi\in\mathfrak{F}_n$, and let $\chi\pmod{\kq}$ be a primitive Hecke character.  There exists an effectively computable constant $\Cl[abcon]{ZFR_const}=\Cr{ZFR_const}(\pi)>0$ such that $L(s,\pi\otimes\chi)\neq 0$ in the region
	\[
	\re(s)\geq 1-\frac{\Cr{ZFR_const}}{\log(\N\kq(3+|\im(s)|))}
	\]
with the possible exception of one real zero $\beta_1<1$ when $\pi\otimes\chi$ is self-dual.  When $\pi=\tilde{\pi}$, the exceptional zero can only exist when $\chi$ is primitive, nontrivial, and quadratic.
\end{lemma}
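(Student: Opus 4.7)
The plan is to carry out the standard de la Vall\'ee-Poussin zero-free region argument in its automorphic incarnation, modeled on Hoffstein--Ramakrishnan and Moreno. The two ingredients are a Hadamard bound on $-\re(L'/L)(s,\pi\otimes\chi)$ in the near-critical strip and a non-negativity inequality provided by an auxiliary Rankin--Selberg $L$-function. Since $\Lambda(s,\pi\otimes\chi)$ is entire of order one (Section 2.2), the standard Hadamard expansion gives, for $1\leq\re(s)\leq 2$,
\[
-\re\frac{L'}{L}(s,\pi\otimes\chi)\leq \tfrac{1}{2}\log C(\pi\otimes\chi,\im s)+O_\pi(1)-\sum_\rho\re\frac{1}{s-\rho},
\]
with each summand on the right nonnegative when $\re(s)>\re(\rho)$.

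For the non-negativity input, set $\Pi:=\pi\otimes\chi$, fix a real $t_0$, and consider the isobaric representation $\Sigma:=\mathbbm{1}\boxplus(\Pi\otimes|\det|^{it_0})$ of $\GL_{n+1}(\mathbb{A}_F)$. The Rankin--Selberg $L$-function factors as
\[
L(s,\Sigma\times\tilde\Sigma)=\zeta_F(s)\,L(s,\pi\times\tilde\pi)\,L(s+it_0,\Pi)\,L(s-it_0,\tilde\Pi),
\]
and it has nonnegative Dirichlet coefficients (Section 2.3). Hence for real $\sigma>1$,
\[
-\frac{\zeta_F'}{\zeta_F}(\sigma)-\frac{L'}{L}(\sigma,\pi\times\tilde\pi)-2\re\frac{L'}{L}(\sigma+it_0,\Pi)\geq 0.
\]
The first two terms contribute $2/(\sigma-1)+O_\pi(1)$ near $s=1$ from the simple poles there. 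Substituting the Hadamard bound into the third term at $s=\sigma+it_0$, controlling archimedean factors via \eqref{conductorpair}, and optimizing $\sigma=1+\delta$ with $\delta\asymp 1/\log(\N\kq(3+|t_0|))$, one forces every nontrivial zero $\rho=\beta+i\gamma$ of $L(s,\Pi)$ to satisfy $\beta\leq 1-c_\pi/\log(\N\kq(3+|\gamma|))$, except possibly when the argument breaks down due to an additional polar contribution on the left-hand side.

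Such additional contributions occur only when $\Pi$ is self-dual, for then $L(s,\Pi\times\Pi)=L(s,\Pi\times\tilde\Pi)$ has a simple pole at $s=1$ that leaks into the limit $t_0\to 0$. In this self-dual case, a Landau-style argument---comparing the inequality under two hypothetical real zeros $\beta_1>\beta_2$ very close to $1$ and absorbing the common polar term---yields a contradiction unless at most one such zero exists, and that zero must then be real and simple. Self-duality $\tilde\Pi\cong\Pi$ rewrites as $\tilde\pi\otimes\bar\chi\cong\pi\otimes\chi$; the hypothesis $\pi=\tilde\pi$ thus simplifies this to $\chi^2=\mathbbm{1}$, so $\chi$ is quadratic. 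Nontriviality is obtained by a separate Hoffstein--Ramakrishnan-type argument using the auxiliary $\mathbbm{1}\boxplus\pi$, which excludes a Landau--Siegel zero for $L(s,\pi)$ itself when $\pi$ is self-dual cuspidal with $n\geq 2$.

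\textbf{Main obstacle.} The principal subtlety is the self-dual case: one must isolate the polar contribution of $L(s,\Pi\times\Pi)$ and verify that the Landau argument still forces uniqueness of the exceptional zero. Uniform control of $c_\pi$ in the $\chi$-aspect also requires careful use of the conductor inequality \eqref{conductorpair} together with the Ramanujan-type bounds \eqref{boundcoeff2}. These manipulations are standard but delicate; the deeper ineffective $\N\kq$-aspect bound on the exceptional zero (Theorem \ref{thm:siegel}) requires genuinely separate arguments and is not addressed here.
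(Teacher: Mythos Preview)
Your auxiliary isobaric sum $\Sigma=\mathbbm{1}\boxplus(\Pi\otimes|\det|^{it_0})$ is too small, and the resulting inequality carries no information. With this choice, $L(s,\Sigma\times\tilde\Sigma)$ has a pole of order $r=2$ at $s=1$ (one from $\zeta_F$, one from $L(s,\Pi\times\tilde\Pi)$), while a zero $\rho=\beta+it_0$ of $L(s,\Pi)$ produces a real zero at $s=\beta$ of multiplicity $m=2$ (once from $L(s+it_0,\Pi)$, once from $L(s-it_0,\tilde\Pi)$). Carrying your argument through, the nonnegativity and Hadamard inputs combine to give only
\[
\frac{1}{\sigma-\beta}\;\leq\;\frac{1}{\sigma-1}+O_\pi\big(\log(\N\kq(3+|t_0|))\big),
\]
which is trivially true for every $\beta<1<\sigma$ and so places no constraint on $\beta$. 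The de la Vall\'ee-Poussin mechanism requires the zero multiplicity to strictly exceed the pole order; the standard fix is to take
\[
\Sigma=\mathbbm{1}\boxplus(\Pi\otimes|\det|^{it_0})\boxplus(\tilde\Pi\otimes|\det|^{-it_0}),
\]
which yields $r=3$ and $m=4$ when $t_0\neq 0$ or $\Pi$ is not self-dual, recovering the classical $3$--$4$ balance. In the self-dual case with $t_0=0$ the extra factor $L(s,\Pi\times\Pi)=L(s,\Pi\times\tilde\Pi)$ adds two more poles, giving $r=5>m=4$; this is precisely what allows a single real exceptional zero to survive, and one then runs the Landau argument on this larger product. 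A secondary issue: your displayed factorization has $L(s,\pi\times\tilde\pi)$ where it should read $L(s,\Pi\times\tilde\Pi)$; these agree at places where $\chi$ is unramified, but you should check that the discrepancy at $\kp\mid\kq$ is absorbed into the $O(\log\N\kq)$ budget rather than claimed as $O_\pi(1)$.

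For comparison, the paper does not carry out any of this directly: it simply invokes \cite[Theorem~A.1]{HB-2019} (applied with the pair $(\pi\otimes\chi,\mathbbm{1})$) for the generic region and the at-most-one exceptional zero, together with \cite[Theorem~A.1]{Lapid} for nonvanishing at $s=1$, and absorbs the trivial-$\chi$ case into the constant $c_1(\pi)$. Your outline is the right \emph{shape} for a self-contained proof, but with the enlarged $\Sigma$ above.
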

\begin{proof}
When $|\im(s)|\neq 0$ or $\pi\otimes\chi$ is not self-dual, then the result follows from \cite[Theroem A.1]{HB-2019} with $\pi$ (respectively $\pi'$) therein replaced by $\pi\otimes\chi$ (respectively $\mathbbm{1}$).  When $\im(s)=0$ and $\pi\otimes\chi$ is self-dual, then by \cite[Theorem A.1]{HB-2019}, there exists at most one zero $\beta_1<1$ in the stated region, while the nonvanishing of $L(1,\pi\otimes\chi)$ follows from \cite[Theorem A.1]{Lapid}.  If $\pi=\tilde{\pi}$, then $\pi\otimes\chi$ is self-dual if and only if $\chi$ is real and primitive.  When $\pi=\tilde{\pi}$ and $\chi$ is trivial, then by \cite[Theorem A.1]{Lapid}, there exists effectively computable constant $\Cl[abcon]{brumleyzfrtriv}=\Cr{brumleyzfrtriv}(\pi)>0$ such that if $1-\Cr{brumleyzfrtriv}\leq s<1$, then $L(s,\pi)\neq 0$.  This exhausts all cases once $\Cr{ZFR_const}$ is made suitably large (in an effective manner depending at most on $\pi$).
\end{proof}

Next, we quantify the idea that exceptional zeros are rare.

\begin{lemma}
\label{lem:Page}
	Let $\pi\in\mathfrak{F}_n$.  Among the primitive quadratic Hecke characters $\chi\pmod{\kq}$ with $\N\kq\leq Q$, at most one, say $\chi_1$, has the property that $L(s,\pi\otimes\chi)$ has a real zero $\beta_1$ in the interval
	\[
	1-\frac{\Cr{ZFR_const}}{\log Q}\leq s<1.
	\]
\end{lemma}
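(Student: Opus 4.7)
The plan is a Page-style contradiction argument built from an auxiliary isobaric Rankin--Selberg $L$-function with non-negative Dirichlet coefficients.  Suppose for contradiction that two distinct primitive quadratic Hecke characters $\chi_1\pmod{\kq_1}$ and $\chi_2\pmod{\kq_2}$ with $\N\kq_j\leq Q$ each produce a real zero $\beta_j\in[1-\Cr{ZFR_const}/\log Q,\,1)$ of $L(s,\pi\otimes\chi_j)$.  I would show that this forces $\Cr{ZFR_const}$ above an explicit threshold depending only on $\pi$; shrinking the $\Cr{ZFR_const}$ of Lemma \ref{lem:ZFR} (if necessary) then makes both statements hold with a common constant.

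The auxiliary object is $\cL(s):=L(s,\Pi\times\widetilde\Pi)$ with $\Pi:=\mathbbm{1}\boxplus(\pi\otimes\chi_1)\boxplus(\pi\otimes\chi_2)$.  Since each $\chi_j$ is real, $\widetilde\Pi=\mathbbm{1}\boxplus(\tilde\pi\otimes\chi_1)\boxplus(\tilde\pi\otimes\chi_2)$, and expanding the $3\times 3$ Rankin--Selberg product via the twist identity $(A\otimes\alpha)\times(B\otimes\beta)=(A\times B)\otimes(\alpha\beta)$ yields
\[
\cL(s)=\zeta_F(s)\prod_{j=1}^{2}L(s,\pi\otimes\chi_j)L(s,\tilde\pi\otimes\chi_j)\cdot L(s,\pi\times\tilde\pi)^{2}\cdot L\bigl(s,(\pi\times\tilde\pi)\otimes\chi_1\chi_2\bigr)^{2}.
\]
Three ingredients drive the proof: \emph{(i)} because $\cL$ is of the form $L(s,A\times\widetilde A)$, $\log\cL(s)$ has non-negative Dirichlet coefficients, so $-\cL'(\sigma)/\cL(\sigma)\geq 0$ for real $\sigma>1$; \emph{(ii)} the identity $\overline{L(s,\pi\otimes\chi_j)}=L(\bar s,\tilde\pi\otimes\chi_j)$ together with $\beta_j\in\R$ forces $L(\beta_j,\tilde\pi\otimes\chi_j)=0$, so $\cL(s)$ vanishes to order at least $2$ at each of $\beta_1,\beta_2$; \emph{(iii)} a pairing count among the three components of $\Pi$ shows that the pole order of $\cL$ at $s=1$ equals $m=3$ generically, and $m=5$ precisely when $\pi\cong\pi\otimes\chi_1\chi_2$ (the only way for $L(s,(\pi\times\tilde\pi)\otimes\chi_1\chi_2)$ to pick up a pole at $s=1$).

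The contradiction is then obtained by combining the positivity in \emph{(i)} with the Hadamard factorisation of the completed $L$-function $\Lambda(s,\Pi\times\widetilde\Pi)$.  Estimating the archimedean logarithmic derivative by Stirling and bounding the analytic conductor polynomially in $Q$ via \eqref{conductorpair}, the inequality $-\cL'(\sigma)/\cL(\sigma)\geq 0$ evaluated at $\sigma=1+\delta/\log Q$ produces
\[
\sum_{\rho}\re\frac{1}{\sigma-\rho}\leq\frac{m}{\sigma-1}+C_\pi\log Q,
\]
where $C_\pi$ depends only on $\pi$.  Restricting the left-hand sum to the four zeros contributed by $\beta_1,\beta_2$ (with multiplicity $2$ each) reduces matters to the numerical inequality $4/(\delta+\Cr{ZFR_const})\leq m/\delta+C_\pi$; optimising in $\delta$ rules this out in the generic case $m=3$ whenever $\Cr{ZFR_const}$ lies below a $\pi$-dependent threshold.

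The main obstacle is the degenerate case $m=5$, where the naive bookkeeping fails because $5>4$.  Here I would exploit the very isomorphism $\pi\cong\pi\otimes\chi_1\chi_2$ that triggered the extra poles: tensoring by $\chi_1$ and using $\chi_1^2=\mathbbm{1}$ yields $\pi\otimes\chi_1\cong\pi\otimes\chi_2$, and consequently $L(s,\pi\otimes\chi_1)=L(s,\pi\otimes\chi_2)$ and, by duality, $L(s,\tilde\pi\otimes\chi_1)=L(s,\tilde\pi\otimes\chi_2)$.  Each of the four cuspidal factors in $\cL$ therefore vanishes at \emph{both} $\beta_1$ and $\beta_2$, upgrading the zero multiplicity at each exceptional point from $2$ to $4$.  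The sharpened inequality $8/(\delta+\Cr{ZFR_const})\leq 5/\delta+C_\pi$ is broken by the same $\delta$-optimisation, closing the degenerate case and completing the proof.
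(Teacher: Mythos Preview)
Your approach essentially unwinds the paper's one-line citation of Hoffstein--Ramakrishnan \cite[Theorem~A]{HR-1995}: a Page-type argument via the nonnegativity of $-\tfrac{d}{ds}\log L(s,\Pi\times\widetilde\Pi)$ on $(1,\infty)$ for the isobaric $\Pi=\mathbbm{1}\boxplus(\pi\otimes\chi_1)\boxplus(\pi\otimes\chi_2)$.  The generic case $m=3$ is carried out correctly, and the $\delta$-optimisation yields the desired $\pi$-dependent threshold for $\Cr{ZFR_const}$.

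The gap is in your degenerate branch $m=5$.  When $\pi\cong\pi\otimes\chi_1\chi_2$ you correctly deduce $\pi\otimes\chi_1\cong\pi\otimes\chi_2$, so the two twisted $L$-functions coincide.  But then the hypotheses ``$L(s,\pi\otimes\chi_j)$ has a real zero $\beta_j$ in the interval'' refer to zeros of a \emph{single} $L$-function, and nothing forces $\beta_1\neq\beta_2$; the natural scenario is $\beta_1=\beta_2=\beta$.  At that one point your four cuspidal factors contribute total multiplicity $4$, not $8$, and the inequality becomes $4/(\delta+\Cr{ZFR_const})\leq 5/\delta+C_\pi$, which holds trivially since already $4/(\delta+\Cr{ZFR_const})<5/\delta$.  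No choice of $\delta$ produces a contradiction, so your ``sharpened inequality $8/(\delta+\Cr{ZFR_const})\leq 5/\delta+C_\pi$'' is unjustified.

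This is not merely a bookkeeping slip.  If $\pi$ admits a nontrivial quadratic self-twist, two genuinely distinct primitive quadratic characters $\chi_1,\chi_2$ can give isomorphic $\pi\otimes\chi_j$, and a single real zero of this common $L$-function near $1$ makes both $\chi_j$ exceptional in the sense of the lemma; no Rankin--Selberg positivity argument built from $\Pi\times\widetilde\Pi$ can separate them.  Note that the paper only invokes the lemma (through Theorem~\ref{thm:zero-free_region}) for $\pi\in\mathfrak{F}_n^\flat$, where quadratic self-twists are excluded and the degenerate case is vacuous.  Either restrict your statement to $\pi\in\mathfrak{F}_n^\flat$ and drop the $m=5$ branch entirely, or acknowledge that the self-twist situation requires an input beyond the auxiliary $L$-function you constructed.
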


\begin{proof}
This follows from \cite[Theorem A]{HR-1995}.  We may take $\Cr{ZFR_const}$ to be the same as in Lemma \ref{lem:ZFR} once $\Cr{ZFR_const}$ is made suitably small (in a manner that depends at most on $\pi$).
\end{proof}

\subsection{Preceding literature}

Siegel proved that if $\chi\pmod{q}$ is a primitive nontrivial quadratic Dirichlet character, then for all $\epsilon>0$, there exists an ineffective constant $c(\epsilon)>0$ such that $L(1,\chi)>c(\epsilon)q^{-\epsilon}$.  All known proofs except for one by Bombieri \cite[Th{\'e}or{\`e}me 15]{Bombieri_crible} use the fact that if $\chi\pmod{q}$ and $\chi'\pmod{q'}$ are distinct primitive nontrivial quadratic Dirichlet characters, and $\chi''$ is he primitive Dirichlet character that induces $\chi\chi'$, then there exists a Dirichlet series $F(s)$, depending explicitly on $\chi$ and $\chi'$, such that $F(s)$ has
\begin{enumerate}[(i)]
\item a pole of odd order $r\geq 1$ at $s=1$,
\item nonnegative Dirichlet coefficients, and
\item an analytic continuation to suitable region past $\re(s)=1$ (e.g., $\mathbb{C}-\{1\}$),
\item and a residue at $s=1$ that has $L(1,\chi)$ as a factor with integral multiplicity at least one.
\end{enumerate}
To study $L(1,\chi)$, the most natural choice of $F(s)$ is $\zeta(s)L(s,\chi)L(s,\chi')L(s,\chi'')$.  A pole of odd order $r$ at $s=1$ is important; under the above hypotheses, the residue $R_F$ of $F(s)$ at $s=1$ satisfies $R_F>0$, and as $s\to 1$ along the real line, we have
\begin{equation}
\label{eqn:Laurent_asymp}
F(s)\sim \frac{R_F}{s-1}.
\end{equation}
If $0<\epsilon<1$ and there exists $\chi'\pmod{q'}$ such that $L(s_0,\chi')=0$ for some $s_0\in(1-\epsilon,1)$, then $F(s_0)\leq 0$.  On the other hand, if no such $\chi'$ exists, then by \eqref{eqn:Laurent_asymp}, we have that $F(s_0)\leq 0$ for some $s_0\in(1-\epsilon/2,1)$.  Therefore, for all $0<\epsilon<1$, there exists $s_0\in(1-\epsilon,1)$ and $\chi'\pmod{q'}$, both depending {\it only on $\epsilon$}, such that $F(s_0)\leq 0$.  Davenport's book \cite[Ch. 20]{Davenport} is a standard source; it gives Estermann's proof, which requires this argument as a key step.

Let $\pi\in\mathfrak{F}_n$, and let $\chi\pmod{\kq}$ and $\chi'\pmod{\kq'}$ be distinct nontrivial primitive quadratic Hecke characters.  Let $\psi$ be the primitive character that induces $\chi'\chi$ (whose conductor necessarily divides $\kq\kq'$).  The possible existence of the exceptional real zero of $L(s,\pi\otimes\chi)$ in Lemma \ref{lem:ZFR} was eliminated by Hoffstein and Ramakrishnan \cite[Theorem B]{HR-1995} under the assumption of automorphy for certain Rankin--Selberg convolutions depending on $\pi$.  When their automorphy hypothesis is not known to be satisfied, it is unclear how to construct a Dirichlet series $L(s)$ depending on $\chi$ and $\chi'$ with nonnegative Dirichlet coefficients, an analytic continuation, a pole of odd order at $s=1$, and a residue at $s=1$ that has $L(1,\pi\otimes\chi)$ as a factor with integral multiplicity at least one.

If we allow for a pole of {\bf even} order at $s=1$, then we can construct an $L(s)$ satisfying properties (ii)-(iv) above.  Let $\pi\in\mathfrak{F}_n$, and suppose that $\pi\neq\pi\otimes\nu$ for all primitive nontrivial quadratic Hecke characters over $F$.   Define $\Pi=\mathbbm{1}\boxplus\pi$ and $(\Pi\times\tilde{\Pi})_{\chi}=\chi\boxplus\pi\otimes\chi\boxplus\tilde{\pi}\otimes\chi\boxplus\pi\times(\tilde{\pi}\otimes\chi)$. Consider the representation
\begin{equation}
\label{eqn:pi_star_def}
\Pi^{\star} = (\Pi\times\tilde{\Pi})\boxplus (\Pi\times\tilde{\Pi})_{\chi}\boxplus (\Pi\times\tilde{\Pi})_{\chi'}\boxplus(\Pi\times\tilde{\Pi})_{\psi}
\end{equation}
along with its $L$-function
\begin{equation}
\label{eqn:aux_L-function}
\begin{aligned}
L(s,\Pi^{\star})&=\zeta_F(s)L(s,\chi)L(s,\chi')L(s,\psi)L(s,\pi) L(s,\tilde{\pi}) L(s,\pi\otimes\chi)L(s,\tilde{\pi}\otimes\chi)\\
&\cdot L(s,\pi\otimes\chi')L(s,\tilde{\pi}\otimes\chi') L(s,\pi\otimes\psi)L(s,\tilde{\pi}\otimes\psi)L(s,\pi\times\tilde{\pi})L(s,\pi\times(\tilde{\pi}\otimes\chi))\\
&\cdot L(s,\pi\times(\tilde{\pi}\otimes\chi'))L(s,\pi\times(\tilde{\pi}\otimes\psi)).
\end{aligned}
\end{equation}
Our twist hypothesis for $\pi$ ensures that $L(s,\Pi^{\star})$ is holomorphic on $\mathbb{C}-\{1\}$ with a pole of order two at $s=1$.  This auxiliary $L$-function was suggested by Molteni \cite[p. 141]{Molteni-2002} in a special case, with \eqref{eqn:aux_L-function} providing a natural generalization.  Instead of providing full details for how to prove a Siegel-type lower bound for $|L(1,\pi\otimes\chi)|$ using $L(s,\Pi^{\star})$, Molteni references a ``standard approach to Siegel-type theorems'' in a paper by Golubeva and Fomenko \cite{GF}.  However, in \cite[pp. 87-88]{GF}, Golubeva and Fomenko only say that Estermann's proof of Siegel's theorem (the version in Davenport \cite[Ch. 20]{Davenport}) applies to \eqref{eqn:pi_star_def} (with $\pi\in\mathfrak{F}_2$) ``after fairly tedious calculations.''

As stated above, \eqref{eqn:aux_L-function} satisfies properties (ii)-(iv), but not (i), since $L(s,\Pi^{\star})$ has a pole of order two at $s=1$.  In this situation, a Siegel-type lower bound does {\bf not} follow from a direct generalization of the arguments in \cite[Ch. 20]{Davenport}, or any other argument that proves Siegel's theorem using the above auxiliary $L$-function $\zeta(s)L(s,\chi)L(s,\chi')L(s,\chi'')$.  Since $L(s,\Pi^{\star})$ has a pole of order 2 at $s=1$ and nonnegative Dirichlet coefficients, it follows that there exists a constant $R_{\Pi^{\star}}>0$ such that as $s\to 1$ along the reals, we have
\[
L(s,\Pi^{\star})\sim \frac{R_{\Pi^{\star}}}{(s-1)^2}.
\]
Consequently, there exists $s'<1$ such that $L(s,\Pi^{\star})>0$ for all $s\in[s',1)$.  Therefore, it is no longer true that for all $0<\epsilon<1$, there exists $s_0\in(1-\epsilon,1)$ and $\chi'\pmod{\kq'}$ such that $L(s_0,\Pi^{\star})\leq 0$.  An identical error can also be found in \cite{Ichihara}.

In summary, a separate approach is needed in order to produce a lower bound for $|L(1,\pi\otimes\chi)|$ when $\epsilon$ is so small that $L(s_0,\Pi^{\star})>0$ for all $s_0\in(1-\epsilon,1)$.  Such an approach is provided by the following lemma.  This leads to a correction and substantial generalization of the works in \cite{Molteni-2002,GF,Ichihara}.

\begin{lemma}
\label{lem:Li_lower}
	Let $\chi\pmod{\kq}$ be a primitive quadratic Hecke character, and let $\pi\in\mathfrak{F}_n$.  If $L(s,\pi\otimes\chi)\neq 0$ in the region
	\[
	\re(s)\geq 1-\frac{\Cr{ZFR_const}}{\log(\N\kq(3+|\im(s)|))},
	\]
	then there exist constants $\Cl[abcon]{XLi1}=\Cr{XLi1}(\pi)>0$ and $\Cl[abcon]{XLi2}=\Cr{XLi2}(\pi)>0$ such that
	\[
	|L(1,\pi\otimes\chi)|\geq \Cr{XLi1}\exp(-\Cr{XLi2}\sqrt{\log \N\kq}).
	\]
\end{lemma}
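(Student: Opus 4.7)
The plan is a classical complex-analytic argument exploiting the analyticity of $\log L(s,\pi\otimes\chi)$ in the hypothesized zero-free region. Since $L(s,\pi\otimes\chi)$ is non-vanishing for $\re(s)\geq 1-\Cr{ZFR_const}/\log(\N\kq(3+|\im(s)|))$, the function $\log L(s,\pi\otimes\chi)$ is single-valued and holomorphic there, so a bound at a well-chosen reference point can be transferred to $s=1$.

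First I would collect the two key inputs. At the reference point $s_0=2$, absolute convergence of the Euler product together with the Satake-parameter bound \eqref{boundcoeff} yields $|\log L(s_0,\pi\otimes\chi)|\ll_\pi 1$, via the estimate
\[
\sum_{\kp}\sum_{k\geq 1}\frac{|a_\pi(\kp^k)|}{k\,\N\kp^{ks_0}}\leq n\log\zeta_F(s_0-\theta_n)\ll_\pi 1.
\]
On the other hand, applying the Phragm\'en--Lindel\"of convexity principle to $L(s,\pi\otimes\chi)$---using the functional equation (noting $\widetilde{\pi\otimes\chi}=\tilde\pi\otimes\bar\chi$), Stirling's formula for the archimedean factors, and the conductor bound \eqref{conductorpair}---would give $\log|L(s,\pi\otimes\chi)|\leq A\log\N\kq + O(1)$ uniformly in a disk extending just into the zero-free region.

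I would then apply Hadamard's three-circle theorem to $\log L(s,\pi\otimes\chi)$ (using Borel--Carath\'eodory to convert the bound on $\re(\log L)$ into a bound on $|\log L|$) with circles centered at $s_0=2$ of radii $r_1<r_2=1<r_3=1+\Cr{ZFR_const}/\log\N\kq$. The inequality
\[
\log M(r_2)\leq\frac{\log(r_3/r_2)}{\log(r_3/r_1)}\log M(r_1)+\frac{\log(r_2/r_1)}{\log(r_3/r_1)}\log M(r_3),
\]
combined with $\log M(r_1)\ll_\pi 1$ and $\log M(r_3)\ll\log\N\kq$, would be optimized over $r_1$ to produce a bound of shape $|\log L(1,\pi\otimes\chi)|\leq \Cr{XLi2}\sqrt{\log\N\kq}$; exponentiating then yields the claimed lower bound with $\Cr{XLi1}=1$.

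The main obstacle I anticipate is establishing the convexity bound on $|L|$ in a disk reaching into the zero-free region with the correct dependence purely on $\N\kq$ (stripping out the archimedean contribution to the analytic conductor). A secondary technical point is executing the Borel--Carath\'eodory step cleanly, since that step loses a factor of order $(R-r)^{-1}$ which must be balanced against the Hadamard interpolation; the $\sqrt{\log\N\kq}$ rate is exactly the outcome of balancing a polynomial-in-$\log\N\kq$ bound against an $O(1)$ bound across a zero-free region of width $\Cr{ZFR_const}/\log\N\kq$, and getting the geometry of the three radii right is where the delicate part of the argument lies.
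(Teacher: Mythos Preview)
The paper does not give an argument here at all: its proof of this lemma is a one-line citation to Corollary~7 of Li \cite{Lixian-2010} (proved over $\Q$, with the number-field extension declared routine). Your proposal is therefore a genuinely different route---a self-contained classical complex-analytic argument in place of an appeal to the literature.

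Your strategy (Borel--Carath\'eodory to pass from $\log|L|\ll\log\N\kq$ to a bound on $|\log L|$, then Hadamard three circles to interpolate against the $O_\pi(1)$ bound deep in the half-plane of absolute convergence) is the standard one for results of this shape and is certainly viable. Two remarks on the execution. First, the archimedean obstacle you flag is not real: since $\chi$ is quadratic, hence of finite order, its archimedean components are bounded, so the analytic conductor $C(\pi\otimes\chi,t)$ on a disk of bounded height is $\ll_\pi\N\kq^n$ by \eqref{conductorpair}, and the crude convexity bound $\log|L|\ll_\pi\log\N\kq$ follows immediately. Second, with your specific radii (centre $2$, $r_2=1$, $r_3=1+\Cr{ZFR_const}/\log\N\kq$) the Hadamard exponent $\alpha=\log(r_2/r_1)/\log(r_3/r_1)$ stays close to $1$ unless $r_1$ is also taken close to $1$; pushing $r_1\to 1$ forces you to control $M_1$ via $|\log L(\sigma)|\ll_\pi\log\frac{1}{\sigma-1}$ (Cauchy--Schwarz against Rankin--Selberg, since GRC is unavailable), and a single three-circle step then yields only $|\log L(1)|\ll\sqrt{\log\N\kq}\cdot(\log\log\N\kq)^{O(1)}$, just short of the target. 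The fix is either to iterate the three-circle step, or---cleaner---to bound $|L'/L|\ll_\pi\log\N\kq$ in the zero-free region and integrate along a segment of length $\asymp 1/\log\N\kq$ from $s_0=1+1/\log\N\kq$ (where $|\log L(s_0)|\ll_\pi\log\log\N\kq$) to $s=1$; this gives $|L(1,\pi\otimes\chi)|\gg_\pi(\log\N\kq)^{-C}$, which is far stronger than the stated $\exp(-\Cr{XLi2}\sqrt{\log\N\kq})$.
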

\begin{proof}
	This follows from work of Li \cite[Corollary 7]{Lixian-2010}.  While the proofs in \cite{Lixian-2010} are performed over $F=\Q$, an extension over number fields follows {\it mutatis mutandis}.
\end{proof}

\subsection{An extension of Siegel's theorem}

Let $\Pi=\pi\boxplus\mathbbm{1}$, and define the numbers $a_{\Pi\times\tilde{\Pi}}(\kp^k)$ by the Dirichlet series identity
\[
\sum_{\kp}\sum_{k=1}^{\infty}\frac{a_{\Pi\times\tilde{\Pi}}(\kp^k)}{k\mathrm{N}\kp^{ks}}=\log L(s,\Pi\times\tilde{\pi}),\qquad\re(s)>1.
\]

\begin{lemma}
\label{lem:nonneg_coeffs}
	Let $\pi\in\mathfrak{F}_n$, let $\chi\pmod{\kq}$ and $\chi'\pmod{\kq'}$ be primitive quadratic Hecke characters, and let $\psi$ be the primitive Hecke character inducing $\chi'\chi$. Let $\Pi=\mathbbm{1}\boxplus\pi$, and recall the definition of $\Pi^{\star}$ in \eqref{eqn:pi_star_def}.  There exists an entire function $\mathcal{H}(s)=\mathcal{H}_{\pi}(s,\chi,\chi')$ such that
	\[
	\sum_{\kp\nmid\kq\kq'\kq_{\pi}}\sum_{k=1}^{\infty}\frac{a_{\Pi\times\tilde{\Pi}}(\kp^k)}{k\N\kp^{ks}}(1+\chi(\kp^k))(1+\chi'(\kp^k))=\log(L(s,\Pi^{\star})\mathcal{H}(s)).
	\]
	The Dirichlet coefficients $\lambda^{\star}(\kn)$ of $L(s,\Pi^{\star})\mathcal{H}(s)$ are nonnegative, and $\lambda^{\star}(\cO_F)=1$.  We have the bounds $|\mathcal{H}(1)|,|\mathcal{H}'(1)|\ll_{\epsilon}(\N\kq'\kq)^{\epsilon}$ for all $\epsilon>0$.  Finally, if $t\in\R$, then $|\mathcal{H}(\frac{1}{2}+it)|\ll (\N\kq'\kq)^{3n^2/2}$.
\end{lemma}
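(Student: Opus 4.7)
The plan is to define $\mathcal{H}(s)$ to strip from $L(s,\Pi^{\star})$ precisely the Euler factors at primes dividing $\kq\kq'\kq_{\pi}$, namely
\[
\mathcal{H}(s) \;:=\; \prod_{\kp\mid\kq\kq'\kq_{\pi}} L_{\kp}(s,\Pi^{\star})^{-1}.
\]
Each local inverse factor is a polynomial in $\N\kp^{-s}$ with constant term $1$, so $\mathcal{H}$ is a finite Dirichlet polynomial and hence entire, and $L(s,\Pi^{\star})\mathcal{H}(s)$ equals the Euler product $\prod_{\kp\nmid\kq\kq'\kq_{\pi}}L_{\kp}(s,\Pi^{\star})$ over good primes.

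To establish the stated identity I would compute $\log L_{\kp}(s,\Pi^{\star})$ at a good prime $\kp$. Applying the twist rule $\alpha_{j,\pi'\otimes\nu}(\kp)=\nu(\kp)\alpha_{j,\pi'}(\kp)$ to each of the four constituents of $\Pi^{\star}$ and summing yields
\[
\log L_{\kp}(s,\Pi^{\star}) \;=\; \sum_{k\geq 1}\frac{a_{\Pi\times\tilde\Pi}(\kp^k)}{k\N\kp^{ks}}\bigl(1+\chi(\kp^k)+\chi'(\kp^k)+\psi(\kp^k)\bigr).
\]
At good primes $\psi(\kp^k)=\chi(\kp^k)\chi'(\kp^k)$ because $\psi$ is induced by $\chi\chi'$, so the bracket factors as $(1+\chi(\kp^k))(1+\chi'(\kp^k))$, and summing over such $\kp$ gives exactly the claimed identity. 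Nonnegativity of the Dirichlet coefficients of $L(s,\Pi^{\star})\mathcal{H}(s)$ then falls out of two observations: $\chi,\chi'$ quadratic forces $(1+\chi(\kp^k))(1+\chi'(\kp^k))\in\{0,4\}$, and
\[
a_{\Pi\times\tilde\Pi}(\kp^k) \;=\; \sum_{i,j}\bigl(\alpha_{i,\Pi}(\kp)\overline{\alpha_{j,\Pi}(\kp)}\bigr)^{k} \;=\; \Bigl|\sum_{i}\alpha_{i,\Pi}(\kp)^{k}\Bigr|^{2}\;\geq\;0,
\]
so $\log(L(s,\Pi^{\star})\mathcal{H}(s))$ has nonnegative Dirichlet coefficients, which passes to the exponential; the constant coefficient $\lambda^{\star}(\cO_F)=1$ comes directly from the Euler product.

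For the three quantitative bounds on $\mathcal{H}$, I would apply the Rankin--Selberg Satake-parameter bound \eqref{boundcoeff2} together with $2\theta_n<1$ to estimate each polynomial factor $L_{\kp}(s,\Pi^{\star})^{-1}$ directly: at $s=1$ each factor is $O(1)$, so the divisor-type bound $\omega(\kq\kq'\kq_{\pi})\ll\log(\N\kq\kq')/\log\log(\N\kq\kq')$ delivers $|\mathcal{H}(1)|\ll_\epsilon(\N\kq\kq')^{\epsilon}$; the identical bound for $\mathcal{H}'(1)$ follows by differentiating through the logarithmic derivative, where each bad prime contributes an extra factor $\log\N\kp$. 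On the critical line $s=\tfrac12+it$, the pointwise estimate $|L_{\kp}(\tfrac12+it,\Pi^{\star})^{-1}|\ll\N\kp^{d/2}$ (with $d$ the degree of $\Pi^{\star}$) multiplies over the bad primes to a polynomial bound in $\N\kq\kq'$ of the required shape. The main delicate point is that the factorization $\psi(\kp^k)=\chi(\kp^k)\chi'(\kp^k)$ can fail at ramified primes; defining $\mathcal{H}$ to excise every bad Euler factor is precisely what keeps the nonnegativity argument and the quantitative bounds cleanly controlled.
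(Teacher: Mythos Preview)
Your approach is correct and follows essentially the same strategy as the paper: define $\mathcal{H}$ as a finite product of inverse local Euler factors at the bad primes, verify the logarithmic identity at good primes via the twist rule and the factorization $(1+\chi)(1+\chi')$, deduce nonnegativity from $a_{\Pi\times\tilde\Pi}(\kp^k)\geq 0$, and bound $\mathcal{H}$ using the Satake bounds \eqref{boundcoeff}, \eqref{boundcoeff2} together with the divisor estimate on $\omega(\kq\kq'\kq_\pi)$.

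There is one minor difference worth noting. You strip \emph{all} local factors of $\Pi^\star$ at bad primes, whereas the paper's explicit formula for $\mathcal{H}$ removes only the local factors attached to the twisted constituents $\pi\otimes\nu$, $\tilde\pi\otimes\nu$, and $\pi\times(\tilde\pi\otimes\nu)$ for $\nu\in\{\chi,\chi',\psi\}$ (the untwisted pieces $\zeta_F$, $\pi$, $\tilde\pi$, $\pi\times\tilde\pi$ and the Hecke $L$-functions are left in place, their bad local factors being either trivial or $O_\pi(1)$). Your choice is conceptually cleaner and makes the identity in the lemma hold on the nose; the price is that your critical-line exponent comes out slightly larger than the paper's $3n^2/2$ (roughly $2n^2$ from the four Rankin--Selberg blocks), but any fixed polynomial exponent suffices for the contour shift in \eqref{eqn:lower_bound_res}, so this is harmless. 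One small slip: you write $(1+\chi(\kp^k))(1+\chi'(\kp^k))\in\{0,4\}$; strictly speaking $\{0,2,4\}$ if one value vanishes, but at good primes both are $\pm 1$ so the set is indeed $\{0,4\}$, and in any case nonnegativity is all that is used.
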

\begin{proof}
%
We determine $\mathcal{H}(s)$ explicitly using the local calculations in \cite[Lemma 2.1]{LRS-1999}:
\begin{align*}
	\mathcal{H}(s)=\prod_{\kp\nmid \kq\kq'\kq_{\pi}}&\Big\{\Big[\prod_{j=1}^n\Big(1-\frac{\alpha_{j,\pi\otimes\chi}(\kp)}{\N\kp^s}\Big)\Big(1-\frac{\alpha_{j,\tilde{\pi}\otimes\chi}(\kp)}{\N\kp^s}\Big)\Big]\Big[\prod_{j=1}^n \prod_{j'=1}^n\Big(1-\frac{\alpha_{j,j',\pi\times(\tilde{\pi}\otimes\chi)}(\kp)}{\N\kp^s}\Big)\Big]\\
	&\cdot 	\Big[\prod_{j=1}^n\Big(1-\frac{\alpha_{j,\pi\otimes\chi'}(\kp)}{\N\kp^s}\Big)\Big(1-\frac{\alpha_{j,\tilde{\pi}\otimes\chi'}(\kp)}{\N\kp^s}\Big)\Big]\Big[\prod_{j=1}^n \prod_{j'=1}^n\Big(1-\frac{\alpha_{j,j',\pi\times(\tilde{\pi}\otimes\chi')}(\kp)}{\N\kp^s}\Big)\Big]\\
	&\cdot 	\Big[\prod_{j=1}^n\Big(1-\frac{\alpha_{j,\pi\otimes\psi}(\kp)}{\N\kp^s}\Big)\Big(1-\frac{\alpha_{j,\tilde{\pi}\otimes\psi}(\kp)}{\N\kp^s}\Big)\Big]\Big[\prod_{j=1}^n \prod_{j'=1}^n\Big(1-\frac{\alpha_{j,j',\pi\times(\tilde{\pi}\otimes\psi)}(\kp)}{\N\kp^s}\Big)\Big]\Big\}.
\end{align*}
The claimed bounds of $|\mathcal{H}(1)|$, $|\mathcal{H}'(1)|$, and $|\mathcal{H}(\frac{1}{2}+it)|$ follow from \eqref{boundcoeff} and \eqref{boundcoeff2} along with the bound $\#\{\kp\colon \kp|\kn\}\ll (\log\log \N\kn)^{-1}\log\N\kn$.

The nonnegativity of $a_{\Pi\times\tilde{\Pi}}(\kp^k)$ follows from the proof of \cite[Lemma a]{HR-1995}.  The nonnegativity of $(1+\chi(\kp^k))(1+\chi'(\kp^k))$ follows from the fact that $\chi$ and $\chi'$ are quadratic.  Thus, the nonnegativity of the Dirichlet coefficients $\lambda^{\star}(\kn)$ follows by exponentiation.
\end{proof}

\begin{lemma}
	\label{lem:residue_calc}
Let $\pi\in\mathfrak{F}_n$, and let $\chi$, $\chi'$ and $\psi$ be as in Lemma \ref{lem:nonneg_coeffs}.  Suppose that $\pi\neq\pi\otimes\nu$ for all $\nu\in\{\chi,\chi',\psi\}$.  Recall the definition of $\Pi^{\star}$ from \eqref{eqn:pi_star_def}.  If $\beta\in(0,1)$, $x\geq 3$, and $\epsilon>0$, then
\[
\mathop{\mathrm{Res}}_{s=1-\beta}L(s+\beta,\Pi^{\star})\mathcal{H}(s+\beta)x^s\Gamma(s) \ll_{\chi',\beta,\epsilon}|L(1,\pi\otimes\chi)|\N\kq^{\epsilon}x^{1-\beta+\epsilon}.
\]
\end{lemma}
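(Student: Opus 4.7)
The plan is to write the residue as $h'(1-\beta)$ for an explicit function $h$ holomorphic at $s=1-\beta$, expand by Leibniz, and then exploit the ``companion'' factor $L(s,\tilde{\pi}\otimes\chi)$ included in the definition of $\Pi^{\star}$ to ensure each summand carries a factor of $|L(1,\pi\otimes\chi)|$. First I would group \eqref{eqn:aux_L-function} as $L(s,\Pi^{\star})=A(s)B_{\chi}(s)B_{\chi'}(s)B_{\psi}(s)C(s)$, where $A(s):=\zeta_F(s)L(s,\pi\times\tilde{\pi})$ carries the double pole at $s=1$, $C(s):=L(s,\pi)L(s,\tilde{\pi})$ depends only on $\pi$, and for $\nu\in\{\chi,\chi',\psi\}$
\[
B_{\nu}(s):=L(s,\nu)\,L(s,\pi\otimes\nu)\,L(s,\tilde{\pi}\otimes\nu)\,L(s,\pi\times(\tilde{\pi}\otimes\nu)).
\]
Under the hypothesis $\pi\neq\pi\otimes\nu$ for each $\nu\in\{\chi,\chi',\psi\}$, every $L$-factor in $B_{\chi}$, $B_{\chi'}$, $B_{\psi}$, and $C$ is entire, so $\phi(s):=(s-1)^{2}L(s,\Pi^{\star})$ is holomorphic at $s=1$ and the residue in question equals $h'(1-\beta)$ for $h(s):=\phi(s+\beta)\mathcal{H}(s+\beta)x^{s}\Gamma(s)$. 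Leibniz then reduces the task to bounding $\phi(1)$, $\phi'(1)$, $\mathcal{H}(1)$, $\mathcal{H}'(1)$, and the $\Gamma$-values.

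The crucial structural observation is that, because $\chi$ is quadratic, $\tilde{\pi}\otimes\chi$ is the contragredient of $\pi\otimes\chi$, so $|L(1,\tilde{\pi}\otimes\chi)|=|L(1,\pi\otimes\chi)|$. Hence $B_{\chi}(1)$ already contains two factors of magnitude $|L(1,\pi\otimes\chi)|$, while in any of the four Leibniz summands of $B_{\chi}'(1)$ at least one of $L(1,\pi\otimes\chi)$, $L(1,\tilde{\pi}\otimes\chi)$ is left undifferentiated and therefore contributes $|L(1,\pi\otimes\chi)|$. Pushing this through the outer product rule $\phi=\tilde{A}B_{\chi}B_{\chi'}B_{\psi}C$, where $\tilde{A}(s):=(s-1)^{2}A(s)$ is holomorphic at $s=1$, every summand of $\phi(1)$ and of $\phi'(1)$ acquires at least one factor of $|L(1,\pi\otimes\chi)|$. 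I would then bound the remaining ingredients by convexity on $\re s=1$: $|L(1,\pi_{i})|\ll_{\epsilon}C(\pi_{i},0)^{\epsilon}$ for each relevant constituent automorphic $L$-function, and the same bound for $|L'(1,\pi_{i})|$ via a Cauchy estimate on a disc of radius $1/\log C(\pi_{i},0)$; the $\chi'$-factors are $O_{\pi,\chi'}(1)$, the $\psi$-factors have conductor $\leq\N\kq\N\kq'$ and so are $\ll_{\chi',\epsilon}\N\kq^{\epsilon}$, and $\tilde{A}(1)$, $\tilde{A}'(1)$, $C(1)$, $C'(1)$ depend only on $\pi$ and $F$.

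Combining with $|\mathcal{H}(1)|,|\mathcal{H}'(1)|\ll_{\chi',\epsilon}\N\kq^{\epsilon}$ from Lemma~\ref{lem:nonneg_coeffs}, $|\Gamma(1-\beta)|,|\Gamma'(1-\beta)|\ll_{\beta}1$, $\log x\leq x^{\epsilon}$, and the trivial absorption $|L(1,\pi\otimes\chi)|\ll_{\pi,\epsilon}\N\kq^{\epsilon}$ (used to convert any surplus $|L(1,\pi\otimes\chi)|^{2}$ into $|L(1,\pi\otimes\chi)|\cdot\N\kq^{\epsilon}$), one obtains the asserted bound. The main obstacle, and the whole point of the previous paragraph, is the structural one: without the companion factor $L(s,\tilde{\pi}\otimes\chi)$ included alongside $L(s,\pi\otimes\chi)$ in $(\Pi\times\tilde{\Pi})_{\chi}$, the Leibniz term obtained by differentiating $L(s,\pi\otimes\chi)$ would produce $L'(1,\pi\otimes\chi)$ unaccompanied by any factor of $|L(1,\pi\otimes\chi)|$, and since $L'(1,\pi\otimes\chi)$ can be of order $\N\kq^{\epsilon}$ even when $|L(1,\pi\otimes\chi)|$ is potentially Siegel-small, the bound would fail. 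This is precisely the flaw diagnosed in Section~\ref{sec:ZFR} for \cite{Molteni-2002,GF,Ichihara}, and the companion pairing in $\Pi^{\star}$ is what rescues the argument.
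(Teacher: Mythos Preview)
Your argument is correct and is essentially the paper's own proof, just organized slightly differently: the paper singles out the four factors $\zeta_F$, $L(s,\pi\times\tilde{\pi})$, $L(s,\pi\otimes\chi)$, $L(s,\tilde{\pi}\otimes\chi)$ and writes the residue out explicitly, while you package the same computation via $\phi(s)=(s-1)^{2}L(s,\Pi^{\star})$ and Leibniz. The decisive observation---that $|L(1,\tilde{\pi}\otimes\chi)|=|L(1,\pi\otimes\chi)|$ because $\chi$ is quadratic, so after one differentiation at least one of the pair survives---is identical in both, as are the $\N\kq^{\epsilon}$ bounds on the remaining factors and their derivatives (the paper cites \cite[Theorem 2]{Lixian-2010} and Cauchy's integral formula where you invoke convexity and a Cauchy estimate).

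One correction to your closing commentary: the deficiency that Section~\ref{sec:ZFR} identifies in \cite{Molteni-2002,GF,Ichihara} is \emph{not} the absence of the companion factor $L(s,\tilde{\pi}\otimes\chi)$. Those papers already use the same $\Pi^{\star}$ (so the companion factor is present). The flaw the paper points out is a different one: because $L(s,\Pi^{\star})$ has a pole of \emph{even} order at $s=1$, one has $L(s,\Pi^{\star})\to+\infty$ as $s\to1^{-}$ along the reals, and hence the Estermann/Davenport dichotomy (``either some $\chi'$ gives a zero in $(1-\epsilon,1)$, or else the auxiliary function is $\leq0$ somewhere in that interval'') breaks down. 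The repair is Lemma~\ref{lem:Li_lower}, not the companion pairing. Your proof of Lemma~\ref{lem:residue_calc} itself is unaffected by this misattribution.
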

\begin{proof}
Define $H(s)= L(s,\Pi^{\star})\mathcal{H}(s) L(s,\pi\otimes\chi)^{-1}L(s,\tilde{\pi}\otimes\chi)^{-1}\zeta_F(s)^{-1}L(s,\pi\times\tilde{\pi})^{-1}$.  The hypothesis that $\pi\neq\pi\otimes\nu$ for all $\nu\in\{\chi,\chi',\psi\}$ ensures that $H(s)$ is entire.  In a neighbourhood of $s=1$, we have the Laurent expansions
\[
\zeta_F(s)=\frac{\kappa_{F}}{s-1}+\kappa_F'+O(s-1),\qquad L(s,\pi\times\tilde{\pi}) = \frac{\kappa_{\pi}}{s-1}+\kappa_{\pi}'+O(s-1).
\]
In view of these definitions, the residue $\mathop{\mathrm{Res}}_{s=1-\beta}L(s+\beta,\Pi^{\star})\mathcal{H}(s+\beta)x^s\Gamma(s)$ equals
\begin{align*}
	x^{1-\beta}\Gamma(1-\beta)L(1,\pi\otimes\chi)\Big(&H(1)L(1,\tilde{\pi}\otimes\chi)\kappa_F \kappa_{\pi}\Big(\log x+\frac{\Gamma'(1-\beta)}{\Gamma(1-\beta)}\Big)\\
	&+H(1)L(1,\tilde{\pi} \otimes\chi)(\kappa_{\pi}\kappa_F'+\kappa_F\kappa_{\pi}')\\
	&+\kappa_F \kappa_{\pi}(L(1,\tilde{\pi} \otimes\chi)H'(1)+L'(1,\pi \otimes\chi)H(1))\\
	&+H(1)\kappa_F\kappa_{\pi}\frac{L(1,\tilde{\pi}\otimes\chi)}{L(1,\pi\otimes\chi)}L'(1,\pi\otimes\chi)\Big).
\end{align*}
Because $\chi$ is quadratic, we have $L(1,\tilde{\pi}\otimes\chi)=\overline{L(1,\pi\otimes\chi)}$, so the ratio $\frac{L(1,\tilde{\pi}\otimes\chi)}{L(1,\pi\otimes\chi)}$ has modulus 1.  Therefore, the lemma will follow from the following estimates
\[
|L(1,\tilde{\pi}\otimes\chi)|,~|L'(1,\tilde{\pi}\otimes\chi)|,~|H(1)|,~|H'(1)|\ll_{\chi',\epsilon}\N\kq^{\epsilon}.
\]
The first three estimates follow directly from \cite[Theorem 2]{Lixian-2010}. For the last inequality, it suffices to know that if $\nu\in\{\chi,\chi',\psi\}$, then $|L'(1,\pi\times(\tilde{\pi}\otimes\nu))|\ll_{\chi',\epsilon}\N\kq^{\epsilon}$.  By Cauchy's integral formula for derivatives, we have
\[
L'(1,\pi\times(\tilde{\pi}\otimes\nu))=\frac{1}{2\pi i}\int_{\Omega}\frac{L(s,\pi\times(\tilde{\pi}\otimes\nu))}{(s-1)^2}ds\ll (\log\N\kq)\max_{z\in \Omega}|L(s,\pi\times(\tilde{\pi}\otimes\nu))|,
\]
where $\Omega$ is the circle of radius $O_{\chi'}(\frac{1}{\log \N\mathfrak{q}})$ centered at $s=1$. By \cite[Theorem 2]{Lixian-2010} and the Phragm\'en-Lindel\"of principle, we have $|H'(1)|\ll_{\chi',\epsilon}\N\kq^{\epsilon}$ to finish the proof.
\end{proof}

We now perform an auxiliary computation using Lemmata \ref{lem:nonneg_coeffs} and \ref{lem:residue_calc}.   If $x\geq 3$ and $\beta\in(0,1)$, then we compute
\[
	\frac{1}{2}\leq e^{-1/x}\leq \sum_{\kn}\frac{\lambda^{\star}(\kn)}{\N\kn^{\beta}}e^{-\N\kn/x}=\frac{1}{2\pi i}\int_{3-i\infty}^{3+i\infty}L(s+\beta,\Pi^{\star})\mathcal{H}(s+\beta)x^s\Gamma(s)ds.
\]
Once we push the contour to the line $\re(s) = \frac{1}{2}-\beta$, the contour integral equals
\[
\mathop{\mathrm{Res}}_{s=1-\beta}L(s+\beta,\Pi^{\star})\mathcal{H}(s+\beta)x^s\Gamma(s) + L(\beta,\Pi^{\star})\mathcal{H}(\beta)+\frac{1}{2\pi i}\int_{\frac{1}{2}-\beta-i\infty}^{\frac{1}{2}-\beta+i\infty}L(s+\beta,\Pi^{\star})\mathcal{H}(s+\beta))x^s\Gamma(s)ds.
\]
By Lemma \ref{lem:nonneg_coeffs}, we have for any $x\geq 3$ and $\beta\in(0,1)$ the bound 
\begin{equation}
\label{eqn:lower_bound_res}
\frac{1}{2}\leq \mathop{\mathrm{Res}}_{s=1-\beta}L(s+\beta,\Pi^{\star})\mathcal{H}(s+\beta)x^s\Gamma(s) + L(\beta,\Pi^{\star})\mathcal{H}(\beta) +O_{\chi',\beta,\epsilon}(\N\kq^{2(n+1)^2+\epsilon}x^{\frac{1}{2}-\beta}).
\end{equation}

\begin{proof}[Proof of Theorem \ref{thm:siegel}]
It suffices to let $\mathrm{N}\kq$ be large (with respect to $\pi$) and $(\log\N\kq)^{-1}<\epsilon<\frac{1}{2}$.  If there exists at most one primitive quadratic nontrivial Hecke character $\nu$ such that $L(s,\pi\otimes\nu)= 0$ for some $s\in(1-\frac{\epsilon}{4},1)$, then the desired result follows from Lemmata \ref{lem:ZFR} and \ref{lem:Li_lower} once we make $\Cr{XLi1}$ and $\Cr{XLi2}$ sufficiently small (depending at most on $\pi$ and the sole exceptional character, if it exists).  For the rest of the proof, we may assume that there exist two distinct primitive quadratic Hecke characters $\nu_1$ and $\nu_2$ such that both $L(s,\pi\otimes\nu_1)$ and $L(s,\pi\otimes\nu_2)$ vanish somewhere in the interval $(1-\frac{\epsilon}{4},1)$.

Subject to this hypothesis, we can choose $\chi'\in\{\nu_1,\nu_2\}-\{\chi\}$.  If we choose $\beta\in(1-\frac{\epsilon}{4},1)$ to be a point at which $L(s,\pi\otimes\chi')$ vanishes, then we may conclude that for all $0<\epsilon<\frac{1}{2}$, there exist $\chi'\pmod{\kq'}$ and $\beta\in(1-\frac{\epsilon}{4},1)$ (depending at most on $\epsilon$ and $\pi$) such that $L(\beta,\Pi^{\star})=0$.  With these choices of $\chi'\pmod{\kq'}$ and $\beta$, the bound \eqref{eqn:lower_bound_res} reduces to
\[
\frac{1}{2}\leq \mathop{\mathrm{Res}}_{s=1-\beta}L(s+\beta,\Pi^{\star})\mathcal{H}(s)x^s\Gamma(s) + O_{\epsilon}(\N\kq^{2(n+1)^2+\epsilon}x^{\frac{1}{2}-\beta}).
\]
Since $\beta\in(1-\frac{\epsilon}{4},1)$, it follows from Lemma \ref{lem:residue_calc} that
\[
1\ll_{\epsilon}|L(1,\pi\otimes\chi)|\N\kq^{\epsilon}x^{\frac{\epsilon}{2}} + \N\kq^{2(n+1)^2+\epsilon}x^{\frac{\epsilon}{2}-\frac{1}{2}}.
\]
We choose $x=\N\kq^{4(n+1)^2}|L(1,\pi\otimes\chi)|^{-2}$.  Note that there exist effectively computable constants $\Cl[abcon]{lb1}=\Cr{lb1}(\pi)>0$ and $\Cl[abcon]{lb2}=\Cr{lb2}(\pi)>0$ such that $|L(1,\pi\otimes\chi)|\leq \Cr{lb1}\exp(\Cr{lb2}\sqrt{\log\mathrm{N}\kq})$ by \cite[Theorem 3]{Lixian-2010}.  Therefore, since we have assumed that $\N\kq$ is large, we have that $x\geq 3$.  We achieve the desired result by solving for $|L(1,\pi\otimes\chi)|$, and rescaling $\epsilon$ in terms of $n$ alone.
\end{proof}

\begin{corollary}
\label{cor:siegel_zero}
Let $\pi\in\mathfrak{F}_n$, and suppose that $\pi\neq\pi\otimes\nu$ for all primitive quadratic Hecke characters $\nu$.  Let $\chi\pmod{\kq}$ be a primitive quadratic Hecke character.  For all $\epsilon>0$, there exists an ineffective constant $c_{\pi}(\epsilon)>0$ such that $L(s,\pi\otimes\chi)\neq 0$ for $s\geq1-c_{\pi}(\epsilon)\N\kq^{-\epsilon}$.
\end{corollary}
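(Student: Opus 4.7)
The plan is to deduce Corollary \ref{cor:siegel_zero} from Theorem \ref{thm:siegel} by the classical mean value argument that converts a Siegel-type lower bound at $s=1$ into a Siegel-type zero-free region near $s=1$. Suppose for contradiction that $\beta_0 \in (0,1)$ is real, close to $1$, and satisfies $L(\beta_0,\pi\otimes\chi)=0$. The complex-valued mean value theorem applied to the (real-analytic) function $s\mapsto L(s,\pi\otimes\chi)$ on the segment $[\beta_0,1]$ yields
\[
|L(1,\pi\otimes\chi)| = |L(1,\pi\otimes\chi)-L(\beta_0,\pi\otimes\chi)| \leq (1-\beta_0)\sup_{s\in[\beta_0,1]}|L'(s,\pi\otimes\chi)|.
\]

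Next I would bound the right-hand supremum via Cauchy's integral formula along a circle of radius $r\asymp(\log \N\kq)^{-1}$ centered at $s=1$, following the same device used in the proof of Lemma \ref{lem:residue_calc}: for every $s$ in the disk of radius $r/2$ around $1$,
\[
|L'(s,\pi\otimes\chi)| \ll r^{-1}\max_{|z-1|=r}|L(z,\pi\otimes\chi)| \ll_{\epsilon} \N\kq^{\epsilon},
\]
where the second inequality comes from the subconvex (or convex) bound in the critical strip, for instance from \cite[Theorem 2]{Lixian-2010} combined with the Phragm\'en--Lindel\"of principle; the spurious logarithmic factor is harmless after rescaling $\epsilon$.

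Combining the two displays with the ineffective Siegel-type lower bound $|L(1,\pi\otimes\chi)|\geq c_{\pi}'(\epsilon)\N\kq^{-\epsilon}$ furnished by Theorem \ref{thm:siegel} gives
\[
c_{\pi}'(\epsilon)\N\kq^{-\epsilon} \leq (1-\beta_0)\, O_{\epsilon}(\N\kq^{\epsilon}),
\]
so that $1-\beta_0 \gg_{\pi,\epsilon}\N\kq^{-2\epsilon}$, which becomes the claimed conclusion after replacing $\epsilon$ by $\epsilon/2$. If the putative real zero $\beta_0$ instead lies outside the small neighbourhood of $1$ in which the Cauchy estimate is valid (that is, if $1-\beta_0\gg (\log\N\kq)^{-1}$), then the conclusion is already trivially stronger, so we may safely restrict to $\beta_0$ close to $1$. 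Since complex zeros and real zeros bounded away from $1$ are already covered by Lemma \ref{lem:ZFR}, this completes the deduction. The only substantive ingredient is Theorem \ref{thm:siegel}; the remaining steps are routine upper bounds for $|L'(s,\pi\otimes\chi)|$ and an application of the mean value theorem, so no genuine obstacle arises once Theorem \ref{thm:siegel} is in hand.
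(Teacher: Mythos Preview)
Your proof is correct and follows essentially the same approach as the paper: apply the mean value theorem on $[\beta_0,1]$ to convert the Siegel-type lower bound $|L(1,\pi\otimes\chi)|\geq c_{\pi}'(\epsilon)\N\kq^{-\epsilon}$ from Theorem \ref{thm:siegel} into a zero-free region, using an upper bound $|L'(\sigma,\pi\otimes\chi)|\ll_{\epsilon}\N\kq^{\epsilon}$ near $s=1$ and then rescaling $\epsilon$. The only cosmetic difference is that the paper cites \cite[Corollary 6]{Lixian-2010} directly for the derivative bound, whereas you derive it via Cauchy's integral formula combined with \cite[Theorem 2]{Lixian-2010} and Phragm\'en--Lindel\"of; these amount to the same thing.
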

\begin{proof}
	Suppose that $L(s,\pi\otimes\chi)$ has a real exceptional zero $\beta_1$ in the region given by Lemma \ref{lem:ZFR}.  By the mean value theorem and Lemma \ref{thm:siegel}, there exists
	\[
	\sigma\in\Big[1-\frac{\Cr{ZFR_const}}{\log(3\N\kq)},1\Big]
	\]
	such that $|L'(\sigma,\pi\otimes\chi)|(1-\beta_1)=|L(1,\pi\otimes\chi)|\geq c_{\pi}'(\frac{\epsilon}{2})\N\kq^{-\epsilon/2}$.  Therefore, we have
	\[
	\beta_1\leq 1-\frac{c_{\pi}'(\frac{\epsilon}{2})}{\N\kq^{\epsilon/2}|L'(\sigma,\pi\otimes\chi)|}.
	\]
	The upper bound $|L'(\sigma,\pi\otimes\chi)|\ll_{\epsilon}\N\kq^{\epsilon/2}$ follows from \cite[Corollary 6]{Lixian-2010} for all $\sigma$ in our range, and the result follows.
\end{proof}

\begin{proof}[Proof of Theorem \ref{thm:zero-free_region}]
This follows from Lemmata \ref{lem:ZFR} and \ref{lem:Page} and Corollary \ref{cor:siegel_zero}.	
\end{proof}

\subsection{An estimate of Siegel--Walfisz type}

We apply our zero-free region in Theorem \ref{thm:zero-free_region} to prove the following result.

\begin{corollary}\label{cor-SW-thm}
Let $\pi\in\mathfrak{F}_n^{\flat}$ and $(\mathfrak{m},\mathfrak{a})=\cO_F$.  For all $A>0$, there exists an ineffective constant $\Cl[abcon]{SW}=\Cr{SW}(\pi,F,A)>0$ such that for $\N\mathfrak{m}\leq (\log x)^A$, we have
	\begin{equation}\label{twistL-Siegel}
	\sum_{\substack{\mathrm{N}\mathfrak{n}\leq x \\ \mathfrak{n}\equiv\mathfrak{a} \text{ in } \mathrm{Cl}^{+}(\mathfrak{m})}}\Lambda_F(\mathfrak{n})a_\pi(\mathfrak{n})\ll_{A} x\exp(-\Cr{SW}\sqrt{\log x}).
	\end{equation}
\end{corollary}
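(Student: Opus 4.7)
The plan is to combine orthogonality of characters on $\mathrm{Cl}^+(\mathfrak{m})$ with contour integration against $-L'(s,\pi\otimes\chi)/L(s,\pi\otimes\chi)$, using the zero-free region from Theorem \ref{thm:zero-free_region} and the Siegel-type lower bound from Corollary \ref{cor:siegel_zero} to absorb any exceptional contribution. First, I would write
\[
\sum_{\substack{\N\mathfrak{n}\leq x \\ \mathfrak{n}\equiv\mathfrak{a}\text{ in }\mathrm{Cl}^+(\mathfrak{m})}} \Lambda_F(\mathfrak{n})a_\pi(\mathfrak{n})
= \frac{1}{h(\mathfrak{m})}\sum_{\chi\bmod\mathfrak{m}}\overline{\chi(\mathfrak{a})}\sum_{\N\mathfrak{n}\leq x}\Lambda_F(\mathfrak{n})a_\pi(\mathfrak{n})\chi(\mathfrak{n}),
\]
and replace each $\chi$ by the primitive character $\chi^*$ it is induced from; the discrepancy is supported on $\mathfrak{n}$ divisible only by primes $\mathfrak{p}\mid\mathfrak{m}$, and the bound \eqref{boundcoeff} together with $\N\mathfrak{m}\leq(\log x)^A$ shows this contributes only $O(x^{\theta_n+\varepsilon})$, which is acceptable.

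Next, for each resulting $\chi^*$ I would apply a truncated Perron formula to
\[
-\frac{L'}{L}(s,\pi\otimes\chi^*)= \sum_{\mathfrak{n}}\frac{\Lambda_F(\mathfrak{n})a_{\pi\otimes\chi^*}(\mathfrak{n})}{\N\mathfrak{n}^s},
\]
and shift the contour to $\operatorname{Re}(s)=1-c/\log(\N\mathfrak{m}(|t|+3))$, truncated at height $T$ chosen as a suitable power of $\log x$. Because $\pi\otimes\chi^*\in\mathfrak{F}_n$ is cuspidal and nontrivial (as $\pi\in\mathfrak{F}_n^\flat$), $L(s,\pi\otimes\chi^*)$ is entire, so no residue appears at $s=1$; the only possible pole of the integrand inside the contour comes from a zero of $L(s,\pi\otimes\chi^*)$ in the zero-free region of Theorem \ref{thm:zero-free_region}, and this can occur for at most one character $\chi_1$, at a real simple zero $\beta_1$. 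Using standard bounds on $L'/L$ in the zero-free region (derived via the functional equation, the Hadamard product, and the convexity bound), the contribution of the shifted contour and horizontal segments to each $\chi^*$ is $O(x\exp(-c_1\sqrt{\log x}))$, since the zero-free region width $c/\log(\N\mathfrak{m}\, T)$ is comparable to $c/\log x$ under the hypothesis $\N\mathfrak{m}\leq(\log x)^A$.

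Finally, for the one possible exceptional character $\chi_1$, the residue theorem contributes a term of size $O(x^{\beta_1}/\beta_1)$. Here I invoke Corollary \ref{cor:siegel_zero} with $\epsilon=1/(2A)$: it produces an ineffective constant $c_\pi(\epsilon)>0$ with
\[
\beta_1\leq 1-c_\pi(\epsilon)\N\mathfrak{m}^{-\epsilon}\leq 1-c_\pi(\epsilon)(\log x)^{-1/2},
\]
so that $x^{\beta_1}\ll x\exp\bigl(-c_\pi(\epsilon)\sqrt{\log x}\bigr)$, which is absorbed into the bound claimed in \eqref{twistL-Siegel}. Summing over the $h(\mathfrak{m})\ll (\log x)^{A[F:\Q]}$ characters $\chi$ and combining with the factor $1/h(\mathfrak{m})$ out front loses at most a power of $\log x$, which is dominated by $\exp(-c\sqrt{\log x})$ after shrinking the constant.

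The main obstacle is precisely the possible Landau--Siegel zero of $L(s,\pi\otimes\chi_1)$: without the unconditional lower bound for $|L(1,\pi\otimes\chi)|$ from Theorem \ref{thm:siegel}, one could not control $\beta_1$ in the $\N\mathfrak{m}$-aspect. Thanks to that theorem and the resulting Corollary \ref{cor:siegel_zero}, this step goes through exactly as in the classical Siegel--Walfisz theorem, with the constant $\Cr{SW}$ inheriting the ineffectivity from Siegel's bound.
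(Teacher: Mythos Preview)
Your proposal is correct and follows essentially the same approach as the paper: orthogonality to reduce to character sums, passage to primitive characters with error $O(x^{\theta_n+\varepsilon})$ via \eqref{boundcoeff}, an explicit-formula/contour argument using the zero-free region of Theorem \ref{thm:zero-free_region}, and absorption of the exceptional term $x^{\beta_1}/\beta_1$ via Corollary \ref{cor:siegel_zero} with $\epsilon=1/(2A)$. The paper invokes \cite[Theorem 5.13]{IK-2004} directly rather than sketching the contour shift, and handles the resulting short-interval error via $2|a_\pi(\mathfrak{n})|\leq 1+a_{\pi\times\tilde\pi}(\mathfrak{n})$, but the substance is identical.
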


\begin{proof}
Using the orthogonality of characters, we find that
\[
\sum_{\substack{\mathrm{N}\mathfrak{n}\leq x \\ \mathfrak{n}\equiv\mathfrak{a} \text{ in } \mathrm{Cl}^{+}(\mathfrak{m})}}\Lambda_F(\mathfrak{n})a_\pi(\mathfrak{n})\ll_A (\log x)^A \max_{\N\mathfrak{m}\leq (\log x)^A} \max_{{\psi \in \widehat{\mathrm{Cl}^{+}(\mathfrak{m})}} }\Big|\sum_{\mathrm{N}\mathfrak{n}\leq x}\Lambda_F(\mathfrak{n})a_\pi(\mathfrak{n})\psi(\mathfrak{n})\Big|.
\]
If $\chi\pmod{\kq}$ is the primitive Hecke character that induces $\psi\pmod{\mathfrak{m}}$, then \eqref{boundcoeff} and our constraint that $\N\mathfrak{m}\leq (\log x)^A$ implies that for all $\varepsilon>0$, we have
\[
\Big|\sum_{\mathrm{N}\mathfrak{n}\leq x}\Lambda_F(\mathfrak{n})a_\pi(\mathfrak{n})\psi(\mathfrak{n})-\sum_{\mathrm{N}\mathfrak{n}\leq x}\Lambda_F(\mathfrak{n})a_{\pi\otimes\chi}(\mathfrak{n})\Big|\leq \sum_{\kp|\mathfrak{m}\kq_{\pi}}\sum_{\substack{k\geq 1 \\ \N\kp^k\leq x}}|a_{\pi}(\kp^k)|\log N\kp\ll_{\varepsilon} x^{\theta_n+\epsilon}.
\]
%
%
%
Without loss of generality, assume that $\chi$ is the exceptional character $\chi_1$ in Theorem \ref{thm:zero-free_region}, and let $\beta_1$ be the corresponding exceptional zero.  We proceed as in \cite[Theorem 5.13]{IK-2004} and conclude that
\[
\sum_{\mathrm{N}\mathfrak{n}\leq x}\Lambda_F(\mathfrak{n})a_{\pi\otimes\chi_1}(\mathfrak{n})= -\frac{x^{\beta_{1}}}{\beta_{1}}+O\Big( \mathrm{N}\mathfrak{m}^{\frac{n}{2}} x\exp(-\tfrac{c_{\pi}}{2}\sqrt{\log x})+\sum_{x<\N\kn\leq x+\frac{x}{\exp(\frac{1}{3}\sqrt{\log x})}}\Lambda_F(\kn)|a_{\pi}(\kn)|\Big).
\]
Since $2|a_{\pi}(\kn)|\leq 1+a_{\pi\times\tilde{\pi}}(\kn)$ \cite[Theorem A.1]{ST-2019} and $L(1+it,\pi\times\tilde{\pi})\neq 0$ for all $t\in\R$ \cite[Appendix]{Lapid}, we find that $\sum_{\N\kn\leq x}|a_{\pi}(\kn)|\Lambda_F(\kn)\ll x$.  Therefore, by the Cauchy--Schwarz inequality and the bound $\N\mathfrak{m}\leq(\log x)^A$, it follows that
\[
\sum_{\mathrm{N}\mathfrak{n}\leq x}\Lambda_F(\mathfrak{n})a_{\pi\otimes\chi_1}(\mathfrak{n})= -\frac{x^{\beta_{1}}}{\beta_{1}}+O( x\exp(-\min\{\tfrac{c_{\pi}}{4},\tfrac{1}{6}\}\sqrt{\log x})).
\]

Theorem \ref{thm:zero-free_region} gives the bound $\beta_{1}\leq 1-c_\pi(\varepsilon)\mathrm{N}\mathfrak{m}^{-\varepsilon}$ for any fixed $\epsilon>0$.  Since $\mathrm{N}\mathfrak{m}\leq (\log x)^A$ for some fixed $A>0$, we set $\varepsilon=1/(2A)$ so that
 \[
 x^{\beta_{1}}\ll x\exp(-c_\pi(\tfrac{1}{2A})(\log x)\mathrm{N}\mathfrak{m}^{-\varepsilon})\ll x\exp(-c_\pi(\tfrac{1}{2A})\sqrt{\log x}).
 \]
We put $c=\frac{1}{10}\min\{\frac{c_{\pi}}{3}, c_\pi(\frac{1}{2A}),\frac{1}{6}\}$, and the desired result follows.
\end{proof}

\section{Auxiliary estimates}
\label{sec-inequality}

We begin with a combinatorial lemma due to Soundrarajan \cite{Sound-2010}. It is useful to prove some inequalities.
\begin{lemma}\label{sound-auto}
	Let $b(1), b(2),\ldots$ be a sequence of complex numbers. Define
	the sequence $c(0)=1, c(1), c(2),\ldots$ by means of the formal identity
	\begin{equation*}
	\exp\big(\sum_{k=1}^{\infty}\frac{b(k)}{k}x^{k}\big)=\sum_{m=0}^{\infty}c(m)x^{m}.
	\end{equation*}
	Define the sequence $C(0)=1, C(1), C(2),\ldots$ by means of the formal identity
	\begin{equation*}
	\exp\big(\sum_{k=1}^{\infty}\frac{|b(k)|^2}{k}x^{k}\big)=\sum_{m=0}^{\infty}C(m)x^{m}.
	\end{equation*}
	Then $|c(m)|^2\leq C(m)$ for all $m$.
\end{lemma}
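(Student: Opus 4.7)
The plan is to expand both generating functions combinatorially and then apply Cauchy--Schwarz to the resulting partition sums. Extracting the coefficient of $x^m$ from $\exp\bigl(\sum_{k\geq 1}\frac{b(k)}{k}x^k\bigr)$ via the power series for $\exp$ and collecting like powers of $x$ yields the standard formula
\[
c(m) \;=\; \sum_{\substack{m_1,m_2,\ldots\geq 0 \\ \sum_{k\geq 1} k\,m_k=m}}\;\prod_{k\geq 1}\frac{b(k)^{m_k}}{k^{m_k}\,m_k!},
\]
and analogously
\[
C(m) \;=\; \sum_{\substack{m_1,m_2,\ldots\geq 0 \\ \sum_{k\geq 1} k\,m_k=m}}\;\prod_{k\geq 1}\frac{|b(k)|^{2m_k}}{k^{m_k}\,m_k!}.
\]

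Next I would split each summand in $c(m)$ as a product of two factors tailored for Cauchy--Schwarz: write
\[
\prod_{k\geq 1}\frac{b(k)^{m_k}}{k^{m_k}\,m_k!} \;=\; \Bigl(\prod_{k\geq 1}\frac{1}{\sqrt{k^{m_k}\,m_k!}}\Bigr)\cdot\Bigl(\prod_{k\geq 1}\frac{b(k)^{m_k}}{\sqrt{k^{m_k}\,m_k!}}\Bigr),
\]
and apply Cauchy--Schwarz to the sum over partitions $(m_1,m_2,\ldots)$ of $m$. This yields
\[
|c(m)|^{2} \;\leq\; \Bigl(\sum_{\sum k m_k = m}\prod_{k\geq 1}\frac{1}{k^{m_k}\,m_k!}\Bigr)\cdot\Bigl(\sum_{\sum k m_k = m}\prod_{k\geq 1}\frac{|b(k)|^{2m_k}}{k^{m_k}\,m_k!}\Bigr).
\]
The second factor is exactly $C(m)$ by the expansion above, so it remains to show that the first factor equals $1$.

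For the first factor, I would recognize it as the coefficient of $x^m$ in
\[
\exp\Bigl(\sum_{k\geq 1}\frac{x^k}{k}\Bigr) \;=\; \exp(-\log(1-x)) \;=\; \frac{1}{1-x} \;=\; \sum_{m\geq 0} x^m,
\]
so that coefficient is $1$ for every $m\geq 0$. Combining, $|c(m)|^2\leq C(m)$, which is what we wanted. No step looks genuinely obstructive here; the only mild subtlety is ensuring the Cauchy--Schwarz split is carried out with the correct symmetric weights so that the ``free'' factor simplifies via the identity $\exp(-\log(1-x)) = (1-x)^{-1}$, and this is exactly why the weight $\frac{1}{k^{m_k} m_k!}$ appears naturally on both sides of the split.
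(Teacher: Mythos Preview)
Your proof is correct. The paper does not supply its own proof of this lemma; it simply attributes the result to Soundararajan \cite{Sound-2010} and states it without argument. Your expansion via the exponential formula, the Cauchy--Schwarz split with weights $\prod_k (k^{m_k} m_k!)^{-1/2}$, and the identification of the free factor as the $x^m$-coefficient of $\exp(-\log(1-x))=(1-x)^{-1}$ is exactly the standard proof (and indeed the one Soundararajan gives), so there is nothing further to compare.
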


Now we here introduce several arithmetic inequalities, which will be used below. The first one is about the coefficients of logarithmic derivatives of $L$-functions (see the appendix by Brumley in \cite{ST-2019})
\begin{equation}\label{coeffpair}
|a_{\pi}(\mathfrak{n})|^2\leq a_{\pi\times\tilde{\pi}}(\mathfrak{n})
\end{equation}
for any $\mathfrak{n}\subset \cO_F,$ where $a_{\pi\times\tilde{\pi}}(\mathfrak{n})$ is given by
\[
-\frac{L^{\prime}}{L}(s,\pi\times\tilde{\pi})=\sum_{\mathfrak{n}\subset \cO_F}\frac{\Lambda_F(\mathfrak{n})a_{\pi\times\tilde{\pi}}(\mathfrak{n})}{{\mathrm{N}\mathfrak{n}}^{s}}.
\]
Using Shahidi's non-vanishing result of $L(s,\pi\times\tilde{\pi})$ at $\operatorname{Re} s=1$ (see \cite{Shahidi-1981}), one has
\begin{equation}\label{PNT-RS}
\sum_{\mathrm{N}\mathfrak{n}\leq x}\Lambda_F(\mathfrak{n})a_{\pi \times \widetilde \pi}(\mathfrak{n})\sim x.
\end{equation}
Similar to \eqref{coeffpair}, we can prove that the corresponding inequality holds when we replace $a_\pi(\mathfrak{n})$ and $a_{\pi\times\tilde{\pi}}(\mathfrak{n})$  by $\lambda_\pi(\mathfrak{n})$ and $\lambda_{\pi\times\tilde{\pi}}(\mathfrak{n})$, respectively.

\begin{lemma}\label{ineq-2rs}
	With the above notation, we have $|\lambda_{\pi}(\mathfrak{n})|^2\leq \lambda_{\pi\times\tilde{\pi}}(\mathfrak{n})$ for all integral ideals $\mathfrak{n}$.
\end{lemma}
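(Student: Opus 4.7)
The plan is to reduce to prime powers by multiplicativity and then apply Soundararajan's combinatorial lemma (Lemma \ref{sound-auto}), bootstrapping from the already-known inequality \eqref{coeffpair} for the von Mangoldt-type coefficients $a_{\pi}$ and $a_{\pi\times\tilde{\pi}}$.

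Since both $\lambda_{\pi}$ and $\lambda_{\pi\times\tilde{\pi}}$ are multiplicative, and since $\lambda_{\pi\times\tilde{\pi}}(\mathfrak{n})\geq 0$ for all $\mathfrak{n}$ (as noted in the discussion preceding \eqref{asymRS}), it suffices to prove the bound at prime powers $\mathfrak{n}=\mathfrak{p}^{m}$. Fix a prime ideal $\mathfrak{p}$. Taking logarithmic derivatives of the Euler factors in \eqref{pi-euler} and \eqref{rs-ep} gives the formal identities
\begin{equation*}
\sum_{m=0}^{\infty}\lambda_{\pi}(\mathfrak{p}^{m})x^{m}=\exp\!\Big(\sum_{k=1}^{\infty}\frac{a_{\pi}(\mathfrak{p}^{k})}{k}x^{k}\Big),\qquad\sum_{m=0}^{\infty}\lambda_{\pi\times\tilde{\pi}}(\mathfrak{p}^{m})x^{m}=\exp\!\Big(\sum_{k=1}^{\infty}\frac{a_{\pi\times\tilde{\pi}}(\mathfrak{p}^{k})}{k}x^{k}\Big),
\end{equation*}
where I have set $x=\mathrm{N}\mathfrak{p}^{-s}$ in each local factor.

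With $b(k):=a_{\pi}(\mathfrak{p}^{k})$, the coefficients produced by Lemma \ref{sound-auto} are $c(m)=\lambda_{\pi}(\mathfrak{p}^{m})$, and the majorant sequence $C(m)$ is defined by
\begin{equation*}
\sum_{m=0}^{\infty}C(m)x^{m}=\exp\!\Big(\sum_{k=1}^{\infty}\frac{|a_{\pi}(\mathfrak{p}^{k})|^{2}}{k}x^{k}\Big).
\end{equation*}
Lemma \ref{sound-auto} then yields $|\lambda_{\pi}(\mathfrak{p}^{m})|^{2}\leq C(m)$. To conclude, I compare $C(m)$ with $\lambda_{\pi\times\tilde{\pi}}(\mathfrak{p}^{m})$: the inequality \eqref{coeffpair} gives $|a_{\pi}(\mathfrak{p}^{k})|^{2}\leq a_{\pi\times\tilde{\pi}}(\mathfrak{p}^{k})$ for every $k\geq 1$, and both sides are nonnegative real numbers. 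Expanding the exponential as a power series in the $x^{k}$'s shows that each coefficient in $\sum_{m}C(m)x^{m}$ is a polynomial in the numbers $|a_{\pi}(\mathfrak{p}^{k})|^{2}$ with nonnegative rational coefficients; replacing each $|a_{\pi}(\mathfrak{p}^{k})|^{2}$ by the larger quantity $a_{\pi\times\tilde{\pi}}(\mathfrak{p}^{k})$ therefore only increases each coefficient, giving $C(m)\leq \lambda_{\pi\times\tilde{\pi}}(\mathfrak{p}^{m})$. Chaining the two inequalities proves the prime-power case, and multiplicativity finishes the proof for general $\mathfrak{n}$.

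There is no real obstacle here; the only subtlety worth verifying carefully is the monotonicity step relating $C(m)$ to $\lambda_{\pi\times\tilde{\pi}}(\mathfrak{p}^{m})$, which hinges on the nonnegativity of the Taylor coefficients of $\exp$ together with \eqref{coeffpair}. In particular, this argument works uniformly at ramified and unramified primes alike, since at unramified primes the inequality \eqref{coeffpair} becomes an equality, while at ramified primes the extra local contributions to $a_{\pi\times\tilde{\pi}}(\mathfrak{p}^{k})$ only help.
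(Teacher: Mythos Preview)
Your proof is correct. The paper itself does not give an argument here but simply cites \cite[Lemma~3.1]{JLW-2020}, so there is nothing to compare directly. That said, your approach is exactly the one the paper employs a few lines later to prove the companion inequality \eqref{mobius-Fpi} for $\mu_\pi$ in Lemma~\ref{ineq-lammucov}(2): reduce to prime powers, recognize the local Euler factor as an exponential generating function in $a_{\pi}(\mathfrak{p}^k)$, and invoke Lemma~\ref{sound-auto}. Your extra monotonicity step (using \eqref{coeffpair} to pass from $C(m)$ to $\lambda_{\pi\times\tilde{\pi}}(\mathfrak{p}^m)$) is the correct way to handle ramified primes uniformly, whereas the paper's treatment of \eqref{mobius-Fpi} restricts to $\mathfrak{p}\nmid\mathfrak{q}_\pi$ where equality $a_{\pi\times\tilde{\pi}}(\mathfrak{p}^k)=|a_\pi(\mathfrak{p}^k)|^2$ holds. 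So your argument is both in the spirit of the paper and slightly cleaner in its treatment of bad primes.
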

\begin{proof}
	See \cite[Lemma 3.1]{JLW-2020}.
\end{proof}

\begin{lemma}\label{ineq-lammucov}
	With the notation as above, let $b_{F,\pi}(\mathfrak{n})=\sum\limits_{\mathfrak{a}\mathfrak{b}=\mathfrak{n}}|\lambda_{\pi}(\mathfrak{a})\mu_\pi(\mathfrak{b})|$. If $(\mathfrak{n},\mathfrak{q}_\pi)=\cO_F$, then
	\begin{align}
	b_{F,\pi}(\mathfrak{n})^2 &\leq \underbrace{(\lambda_{\pi\times\tilde{\pi}}*\cdots*\lambda_{\pi\times\tilde{\pi}})}_{\text{$4(n+1)$ terms}}(\mathfrak{n}), \label{bfpi}\\
	|\mu_\pi(\mathfrak{n})|^2&\leq \lambda_{\pi\times\tilde{\pi}}(\mathfrak{n}),\label{mobius-Fpi}\\
	d_F(\mathfrak{n})|\mu_\pi(\mathfrak{n})|^2&\leq \underbrace{(\lambda_{\pi\times\tilde{\pi}}*\cdots*\lambda_{\pi\times\tilde{\pi}})}_{\text{$n+1$ terms}}(\mathfrak{n}) \label{mobius+divisor-Fpi},
	\end{align}
	where $\underbrace{(\lambda_{\pi\times\tilde{\pi}}*\cdots*\lambda_{\pi\times\tilde{\pi}})}_{\text{$k$ terms}}(\mathfrak{n})=\sum\limits_{\mathfrak{n}_1\cdots \mathfrak{n}_{k}=\mathfrak{n}}\lambda_{\pi\times\tilde{\pi}}(\mathfrak{n}_1)\cdots\lambda_{\pi\times\tilde{\pi}}(\mathfrak{n}_k)$, and $d_F(\mathfrak{n})$ is the divisor function on $F$.
\end{lemma}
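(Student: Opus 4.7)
The plan is to verify all three inequalities locally at each prime power $\mathfrak{p}^m$ with $\mathfrak{p} \nmid \kq_\pi$, since every arithmetic function in sight ($\lambda_\pi$, $\mu_\pi$, $|\mu_\pi|^2$, $d_F$, $\lambda_{\pi\times\tilde\pi}$, and their Dirichlet convolutions) is multiplicative. I would parametrize the local Satake data at $\mathfrak{p}$ by $\alpha = (\alpha_{1,\pi}(\mathfrak{p}),\dots,\alpha_{n,\pi}(\mathfrak{p}))$, so that $\lambda_\pi(\mathfrak{p}^m) = h_m(\alpha)$, $\mu_\pi(\mathfrak{p}^m) = (-1)^m e_m(\alpha)$ (vanishing for $m > n$), and by the Cauchy identity, $\lambda_{\pi\times\tilde\pi}(\mathfrak{p}^m) = \sum_{\lambda \vdash m,\,\ell(\lambda)\leq n}|s_\lambda(\alpha)|^2$.

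For \eqref{mobius-Fpi}, I would apply Lemma \ref{sound-auto} with $b(k) = -a_\pi(\mathfrak{p}^k)$. The identity $\exp(-\sum_k \frac{a_\pi(\mathfrak{p}^k)}{k}x^k) = \prod_j(1-\alpha_{j,\pi}(\mathfrak{p})x)$ shows $c(m) = \mu_\pi(\mathfrak{p}^m)$, while $|b(k)|^2 = |a_\pi(\mathfrak{p}^k)|^2 = a_{\pi\times\tilde\pi}(\mathfrak{p}^k)$ gives $C(m) = \lambda_{\pi\times\tilde\pi}(\mathfrak{p}^m)$. For \eqref{mobius+divisor-Fpi}, I would use that $\mu_\pi(\mathfrak{p}^m)=0$ for $m>n$, so $d_F(\mathfrak{p}^m)|\mu_\pi(\mathfrak{p}^m)|^2 \leq (n+1)|\mu_\pi(\mathfrak{p}^m)|^2 \leq (n+1)\lambda_{\pi\times\tilde\pi}(\mathfrak{p}^m)$ by \eqref{mobius-Fpi}. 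The final factor $n+1$ is absorbed using the convolution inflation bound: since $\lambda_{\pi\times\tilde\pi}(\mathfrak{p}^0)=1$, any one of the $n+1$ slots in an $(n+1)$-fold convolution can carry the full weight $\mathfrak{p}^m$, yielding $\underbrace{\lambda_{\pi\times\tilde\pi}*\cdots*\lambda_{\pi\times\tilde\pi}}_{n+1}(\mathfrak{p}^m) \geq (n+1)\lambda_{\pi\times\tilde\pi}(\mathfrak{p}^m)$ for $m \geq 1$.

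For \eqref{bfpi}, the main new ingredient is the dual Pieri identity
\[
h_m(\alpha)\,e_k(\alpha) = s_{(m,1^k)}(\alpha) + s_{(m+1,1^{k-1})}(\alpha) \qquad (m,k\geq 1),
\]
with the convention that the second term is absent when $k=0$, and $s_\mu(\alpha) = 0$ when $\ell(\mu)>n$. Taking absolute values and summing over $m+k=M$, every hook $\lambda = (M-j, 1^j)$ with $0 \leq j \leq n-1$ appears at most twice, so
\[
b_{F,\pi}(\mathfrak{p}^M) \leq 2\sum_{\substack{\lambda \text{ hook}\vdash M \\ \ell(\lambda)\leq n}}|s_\lambda(\alpha)|.
\]
Since there are at most $n$ such hooks, Cauchy--Schwarz and the Cauchy identity above give
\[
b_{F,\pi}(\mathfrak{p}^M)^2 \leq 4n\sum_{\lambda \text{ hook}\vdash M}|s_\lambda(\alpha)|^2 \leq 4n\,\lambda_{\pi\times\tilde\pi}(\mathfrak{p}^M),
\]
and the convolution inflation argument from the previous paragraph, applied with $4(n+1) \geq 4n$ copies, yields the stated bound.

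The main obstacle will be bookkeeping the dual Pieri expansion—in particular, isolating precisely which hook shapes occur (and with what multiplicity) in the bound for $b_{F,\pi}(\mathfrak{p}^M)$, and confirming that the double-counting produces only the harmless factor of $2$. Once this combinatorial step is carried out, the rest of the argument reduces to elementary symmetric function identities, Cauchy--Schwarz, and the monotonicity of iterated convolutions of $\lambda_{\pi\times\tilde\pi}$.
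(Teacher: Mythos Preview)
Your proposal is correct and follows essentially the same route as the paper: reduce to prime powers by multiplicativity, use Soundararajan's combinatorial lemma (Lemma \ref{sound-auto}) for \eqref{mobius-Fpi}, the vanishing of $\mu_\pi(\mathfrak{p}^m)$ for $m>n$ together with convolution inflation for \eqref{mobius+divisor-Fpi}, and the dual Pieri rule plus Cauchy--Schwarz plus the Cauchy identity for \eqref{bfpi}. Your bookkeeping for \eqref{bfpi} (``each hook appears at most twice, and there are at most $n$ hooks'') is slightly tighter than the paper's, giving the local constant $4n$ rather than the paper's $4(n+1)$, but both are absorbed identically by the $4(n+1)$-fold convolution.
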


\begin{proof}
	From the facts that $\lambda_{\pi}(\mathfrak{n})$, $\mu_\pi(\mathfrak{n})$ and $d_F(\mathfrak{n})$ are all multiplicative, we know that $b_{F,\pi}(\mathfrak{n})^2, |\mu_\pi(\mathfrak{n})|^2, d_F(\mathfrak{n})|\mu_\pi(\mathfrak{n})|^2$ are also multiplicative. It suffices to show the corresponding inequalities hold at  $\mathfrak{n}=\mathfrak{p}^k$ for any $k\geq 0$ and $\mathfrak{p} \nmid \mathfrak{q}_\pi.$
	
	(1) For the first inequality, we actually show a slightly stronger inequality at prime ideal powers as follows:	
	\begin{equation}\label{ineq-pk}
	b_{F,\pi}(\mathfrak{p}^k)^2\leq 4(n+1) \lambda_{\pi\times\tilde{\pi}}(\mathfrak{p}^k) \quad \text{for}\quad  \mathfrak{p} \nmid \mathfrak{q}_\pi.
	\end{equation}
	For a set $\{\alpha_{1},\ldots,\alpha_{n}\}$, we define the polynomial $e_{l}(\alpha_1,\ldots,\alpha_n)$ by
	$$
	e_{l}(\alpha_1,\ldots,\alpha_n)=\sum_{1 \leq j_{1}<\cdots<j_{l} \leq n} \alpha_{j_{1}} \alpha_{j_{2}} \cdots \alpha_{j_{l}}.
	$$
	The polynomial $e_{l}$ is called the $l$-th elementary symmetric polynomial.  If $l=0$, then $e_{l}(x_{1}, \ldots, x_{n})\equiv 1$. By convention, $e_{l}(\alpha_1,\ldots,\alpha_n)=0$ for $l>n$. By \eqref{mu-def}, we know that
	\begin{equation}\label{u-pi-e}
	\mu_\pi(\mathfrak{p}^l)=(-1)^l e_{l}(A_{\pi}(\mathfrak{p})).
	\end{equation}

	A partition $\lambda=(\lambda(i))_{i=1}^{\infty}$ is a sequence of nonincreasing nonnegative integers $\lambda(1)\geq\lambda(2)\geq\cdots$ with only finitely many nonzero entries.  For a partition $\lambda$, let $\ell(\lambda)$ be the number of nonzero $\lambda(i)$, and let $|\lambda|=\sum_{i=1}^{\infty} \lambda(i)$.  For a set $\{\alpha_{1},\ldots,\alpha_{n}\}$ and a partition $\lambda$ with $\ell(\lambda)\leq n$, let $s_{\lambda}(\alpha_1,\ldots,\alpha_n)$ be the Schur polynomial $\det[(\alpha_{i}^{\lambda(j)+n-j})_{ij}] / \det[(\alpha_{i}^{n-j})_{ij}]$ associated to $\lambda$.  If $|\lambda|=0$, then $s_{\lambda}(\alpha_1,\ldots,\alpha_n)\equiv 1$.  By convention, if $\ell(\lambda)>n$, then $s_{\lambda}(\alpha_1,\ldots,\alpha_n)\equiv 0$. Cauchy's identity \cite[Chapter 38]{Bump-2013}, tells us that
	\[
	L(s,\pi_{\mathfrak{p}})=\prod_{j=1}^n\Big(1-\frac{\alpha_{j,\pi}(\mathfrak{p})}{\mathrm{N}\mathfrak{p}^{s}}\Big)^{-1} = \sum_{k=0}^\infty \frac{s_{(k,0,0,\ldots)}(A_{\pi}(\mathfrak{p}))}{\mathrm{N}\mathfrak{p}^{ks}}
	\]
	and, for $\mathfrak{p} \nmid \mathfrak{q}_\pi$,
	\[
	L(s, \pi_{\mathfrak{p}} \times\tilde{\pi}_{\mathfrak{p}})=\prod_{j=1}^{n} \prod_{j'=1}^{n}\Big(1-\frac{\alpha_{j,\pi}(\mathfrak{p}) \bar{\alpha_{j',\pi}(\mathfrak{p})}}{\mathrm{N}\mathfrak{p}^{s}}\Big)^{-1} = \sum_{\lambda}\frac{s_{\lambda}(A_{\pi}(\mathfrak{p}))\bar{s_{\lambda}(A_{\pi}(\mathfrak{p}))}}{\mathrm{N}\mathfrak{p}^{s|\lambda|}},
	\]
	where the sum ranges over all partitions. Then we have
	\begin{equation}\label{lambda-schur}
	\lambda_{\pi}(\mathfrak{p}^k) = s_{(k,0,0,\ldots)}(A_{\pi}(\mathfrak{p})),\qquad \lambda_{\pi\times \tilde{\pi}}(\mathfrak{p}^k) = \sum_{ |\lambda|=k}|s_{\lambda}(A_{\pi}(\mathfrak{p}))|^2.
	\end{equation}
	
	Thus, by \eqref{u-pi-e} and \eqref{lambda-schur}, the dual Pieri rule \cite[Theorem 40.4]{Bump-2013} yields that
	\begin{equation*}
	\begin{aligned}
	b_{F,\pi}(\mathfrak{p}^k)&=\sum_{l=0}^{\min\{k,n\}}\big|e_{l}(A_{\pi}(\mathfrak{p}))s_{(k-l,0,0,\ldots)}(A_{\pi}(\mathfrak{p}))\big|\\
	\leq &\big|s_{(k,0,0,\ldots)}(A_{\pi}(\mathfrak{p}))\big|+\big|s_{({\tiny\underbrace{1,1,\ldots,1}_{\text{$k$
					terms}}},0,0,\ldots)}(A_{\pi}(\mathfrak{p}))\big|\\
	&+\sum_{l=1}^{\min\{k,n\}-1}(\big|s_{({\tiny\underbrace{k-l+1,1,\ldots,1}_{\text{$l$
					terms}}},0,0,\ldots)}(A_{\pi}(\mathfrak{p}))\big|+\big|s_{({\tiny\underbrace{k-l,1,\ldots,1}_{\text{$l+1$
					terms}}},0,0,\ldots)}(A_{\pi}(\mathfrak{p}))\big|).
	\end{aligned}
	\end{equation*}
	By the Cauchy--Schwarz inequality and \eqref{lambda-schur}, we then have
	\begin{align*}
	&|b_{F,\pi}(\mathfrak{p}^k)|^2\\
	&\leq (n+1)\Big(\big|s_{(k,0,0,\ldots)}(A_{\pi}(\mathfrak{p}))\big|^2+\big|s_{({\tiny\underbrace{1,1,\ldots,1}_{\text{$k$
					terms}}},0,0,\ldots)}(A_{\pi}(\mathfrak{p}))\big|^2\Big)\\
	&+2(n+1)\sum_{l=1}^{\min\{k,n\}-1}\Big(\big|s_{({\tiny\underbrace{k-l+1,1,\ldots,1}_{\text{$l$
					terms}}},0,0,\ldots)}(A_{\pi}(\mathfrak{p}))\big|^2+\big|s_{({\tiny\underbrace{k-l,1,\ldots,1}_{\text{$l+1$
					terms}}},0,0,\ldots)}(A_{\pi}(\mathfrak{p}))\big|^2\Big) \\
	&\leq 4(n+1)\lambda_{\pi\times \tilde{\pi}}(\mathfrak{p}^k).
	\end{align*}
	This completes the proof of \eqref{ineq-pk}, which further implies the inequality \eqref{bfpi}. 
	
	(2) For the second one, We see from \eqref{pi-euler} and \eqref{rs-ep} that 
	\begin{equation*}\label{dirichlet-log}
	\log L(s,\pi_{\mathfrak{p}})=\sum_{j=1}^n\sum_{k=1}^{\infty}\frac{\alpha_{j,\pi}(\mathfrak{p})^{k}}{k \mathrm{N}\mathfrak{p}^{ks}}=\sum_{k=1}^{\infty}\frac{a_{\pi}(\mathfrak{p}^{k})}{k \mathrm{N}\mathfrak{p}^{k s}}
	\end{equation*}
	and 
	\begin{equation*}
	\log L(s, \pi_{\mathfrak{p}} \times\tilde{\pi}_{\mathfrak{p}})=\sum_{1\leq j,j'\leq n }\sum_{k=1}^{\infty}\frac{(\alpha_{j,\pi}(\mathfrak{p})\overline{\alpha_{j',\pi}(\mathfrak{p})})^{k}}{k \mathrm{N}\mathfrak{p}^{k s}}=\sum_{k=1}^{\infty}\frac{|a_{\pi}(\mathfrak{p}^{k})|^2}{k \mathrm{N}\mathfrak{p}^{k s}}
	\end{equation*}
	for $\mathfrak{p} \nmid \mathfrak{q}_\pi$.
	Comparing these with \eqref{inv-lfun} and \eqref{pipi-eu-2}, we have 
	\begin{equation*}
	\exp\Big(-\sum_{k=1}^{\infty}\frac{a_{\pi}(\mathfrak{p}^{k})}{k \mathrm{N}\mathfrak{p}^{k s}}\Big)=\sum_{k=0}^{\infty}\frac{\mu_{\pi}(\mathfrak{p}^k)}{\mathrm{N}\mathfrak{p}^{ks}}
	\end{equation*}
	and
	\begin{equation*}
	\exp\Big(\sum_{k=1}^{\infty}\frac{|a_{\pi}(\mathfrak{p}^{k})|^2}{k \mathrm{N}\mathfrak{p}^{k s}}\Big)=\sum_{k=0}^{\infty}\frac{\lambda_{\pi \times \tilde{\pi}}(\mathfrak{p}^k)}{\mathrm{N}\mathfrak{p}^{ks}}.
	\end{equation*}
	By Lemma \ref{sound-auto}, we then get
	\begin{equation}\label{mu2-rs-prime}
	|\mu_\pi(\mathfrak{p}^{k})|^2 \leq \lambda_{\pi\times\tilde{\pi}}(\mathfrak{p}^{k})
	\end{equation}
	for any $k\geq 0$ and $\mathfrak{p} \nmid \mathfrak{q}_\pi$. By multiplicativity, the inequality \eqref{mobius-Fpi} follows.
	
	(3) By \eqref{mu-def} and \eqref{mu2-rs-prime}, we have 
	\[
	d_F(\mathfrak{p}^{k})|\mu_\pi(\mathfrak{p}^{k})|^2\leq (n+1)\lambda_{\pi\times\tilde{\pi}}(\mathfrak{p}^{k}) \quad \text{for any}\; k\geq 0 \;\text{and } \mathfrak{p} \nmid \mathfrak{q}_\pi.
	\]
	So the inequality \eqref{mobius+divisor-Fpi} follows.
\end{proof}

\begin{lemma}\label{ba2-sum}
	Let $b_{F,\pi}(\mathfrak{n})$ be defined as in Lemma \ref{ineq-lammucov}. Then we have
	\[
	\sum_{\mathrm{N}\mathfrak{n}\leq x}b_{F,\pi}(\mathfrak{n})^2\ll  x(\log x)^{4n+3},\qquad \sum_{\mathrm{N}\mathfrak{n}\leq x}	|\mu_\pi(\mathfrak{n})|^2\ll x, \qquad \sum_{\mathrm{N}\mathfrak{n}\leq x} d_F(\mathfrak{n})|\mu_\pi(\mathfrak{n})|^2\ll x(\log x)^{n}.
	\]
\end{lemma}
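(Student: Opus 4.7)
The plan is to combine the pointwise bounds from Lemma \ref{ineq-lammucov} with an inductive estimate for partial sums of iterated Dirichlet convolutions of $\lambda_{\pi\times\tilde{\pi}}$. Concretely, I would establish that for each fixed positive integer $k$,
\begin{equation}\label{eq:k-fold-goal}
\sum_{\mathrm{N}\mathfrak{n}\leq x}\underbrace{(\lambda_{\pi\times\tilde{\pi}}*\cdots*\lambda_{\pi\times\tilde{\pi}})}_{\text{$k$ terms}}(\mathfrak{n}) \ll_{k,\pi} x(\log 3x)^{k-1}.
\end{equation}
Once \eqref{eq:k-fold-goal} is in hand, the three stated bounds follow by direct insertion of the pointwise inequalities \eqref{bfpi}, \eqref{mobius-Fpi}, and \eqref{mobius+divisor-Fpi} with $k=4(n+1)$, $k=1$, and $k=n+1$, respectively. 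The exponents match: $4(n+1)-1=4n+3$, $1-1=0$, and $(n+1)-1=n$.

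Before proving \eqref{eq:k-fold-goal}, I would address the small technicality that the inequalities in Lemma \ref{ineq-lammucov} are only stated under the coprimality hypothesis $(\mathfrak{n},\mathfrak{q}_\pi)=\cO_F$. Each of the arithmetic functions $b_{F,\pi}^2$, $|\mu_\pi|^2$, and $d_F|\mu_\pi|^2$ is multiplicative. At ramified primes $\mathfrak{p}\mid\mathfrak{q}_\pi$, the uniform Satake bound \eqref{boundcoeff} gives $|\lambda_\pi(\mathfrak{p}^j)|,|\mu_\pi(\mathfrak{p}^j)|\ll_n \mathrm{N}\mathfrak{p}^{j\theta_n}$ with $\theta_n<\tfrac12$, so the local Euler factors at the finitely many $\mathfrak{p}\mid\mathfrak{q}_\pi$ converge absolutely in a half-plane strictly larger than $\re(s)>1$. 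Writing $\mathfrak{n}=\mathfrak{n}_1\mathfrak{n}_2$ with $\mathfrak{n}_2\mid\mathfrak{q}_\pi^{\infty}$ and $(\mathfrak{n}_1,\mathfrak{q}_\pi)=\cO_F$, the ramified part factors out of the partial sum and contributes an $O_\pi(1)$ multiplicative constant; I may therefore treat the coprime case without loss of generality.

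For \eqref{eq:k-fold-goal} itself, I would induct on $k$. The base case $k=1$ is exactly the Rankin--Selberg asymptotic \eqref{asymRS}; partial summation then yields
\[
\sum_{\mathrm{N}\mathfrak{a}\leq y}\frac{\lambda_{\pi\times\tilde{\pi}}(\mathfrak{a})}{\mathrm{N}\mathfrak{a}}\ll \log 3y.
\]
For the inductive step, since $\lambda_{\pi\times\tilde{\pi}}\geq 0$, I swap the order of summation:
\[
\sum_{\mathrm{N}\mathfrak{n}\leq x}(\lambda_{\pi\times\tilde{\pi}}^{*k})(\mathfrak{n})=\sum_{\mathrm{N}\mathfrak{a}\leq x}\lambda_{\pi\times\tilde{\pi}}(\mathfrak{a})\sum_{\mathrm{N}\mathfrak{b}\leq x/\mathrm{N}\mathfrak{a}}(\lambda_{\pi\times\tilde{\pi}}^{*(k-1)})(\mathfrak{b})\ll x(\log 3x)^{k-2}\sum_{\mathrm{N}\mathfrak{a}\leq x}\frac{\lambda_{\pi\times\tilde{\pi}}(\mathfrak{a})}{\mathrm{N}\mathfrak{a}}\ll x(\log 3x)^{k-1},
\]
completing the induction.

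There is no serious obstacle here; all three bounds are elementary consequences of the nonnegativity of $\lambda_{\pi\times\tilde{\pi}}$ together with the Tauberian input \eqref{asymRS}. The only mildly delicate point is the bookkeeping at ramified primes, which is handled uniformly via multiplicativity and the Luo--Rudnick--Sarnak/M\"uller--Speh bound \eqref{boundcoeff}.
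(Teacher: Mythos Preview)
Your proposal is correct and follows essentially the same route as the paper: reduce via Lemma \ref{ineq-lammucov} to bounding partial sums of the $k$-fold convolution of $\lambda_{\pi\times\tilde\pi}$, and handle the ramified primes by the multiplicative factorization $\mathfrak{n}=\mathfrak{n}_1\mathfrak{n}_2$ together with \eqref{boundcoeff}. The only cosmetic difference is that the paper obtains \eqref{eq:k-fold-goal} by applying a Tauberian theorem directly to $L(s,\pi\times\tilde\pi)^k$ (whose pole at $s=1$ has order $k$), whereas you prove it by induction on $k$ starting from the Rankin--Selberg input \eqref{asymRS}; both arguments are standard and equivalent in strength.
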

\begin{proof}
	In order to prove three upper bound estimates in a unified way, we introduce an  arithmetic function $f(\mathfrak{n})$ satisfying the following two conditions:
	\begin{enumerate}
		\item $0\leq f(\mathfrak{n})\ll \underbrace{(\lambda_{\pi\times\tilde{\pi}}*\cdots*\lambda_{\pi\times\tilde{\pi}})}_{\text{$k$ terms}}(\mathfrak{n})$ for all $(\mathfrak{n},\mathfrak{q}_\pi)=\cO_F$ and some $k\geq 1$;
		\item $f(\mathfrak{n})\ll \mathrm{N}\mathfrak{n}^{\delta}$ for any integral ideal $\mathfrak{n}$ and some $\delta< 1$.
	\end{enumerate}
	It is clear that the generating series of $\lambda_{\pi\times\tilde{\pi}}(\mathfrak{n})*\cdots*\lambda_{\pi\times\tilde{\pi}}(\mathfrak{n})$ is exactly $L(s,\pi\times\tilde{\pi})^{k}$. Using the analytic properties of the Rankin--Selberg $L$-function $L(s,\pi\times\tilde{\pi})$ and the Tauberian theorem, we find that there exists a constant $\Cl[abcon]{leading}=\Cr{leading}(\pi,k)>0$ such that
	\[
	\sum_{\mathrm{N}\mathfrak{n}\leq x} \underbrace{(\lambda_{\pi\times\tilde{\pi}}*\cdots*\lambda_{\pi\times\tilde{\pi}})}_{\text{$k$ terms}}(\mathfrak{n})\sim \Cr{leading}x(\log x)^{k-1}.
	\]
	Now, it follows that
	\[
	\sum_{\substack{\mathrm{N}\mathfrak{n} \leq x \\ (\mathfrak{n}, \mathfrak{q}_\pi)=\cO_{F}}} f(\mathfrak{n}) \ll x (\log x)^{k-1}.
	\]
	Moreover, for any $\mathfrak{n} \subset \cO_{F},$ one has
	$\mathfrak{n}=\mathfrak{n}_{1} \mathfrak{n}_{2}$ with $\mathfrak{n}_{1}|\mathfrak{q}_{\pi}^{\infty}$ 
	and $(\mathfrak{n}_{2}, \mathfrak{q}_{\pi})=\cO_{F}${, where $\mathfrak{n}_{1}|\mathfrak{q}_{\pi}^{\infty}$ means that $\mathfrak{p}|\mathfrak{n}_1$ implies $\mathfrak{p}|\mathfrak{q}_\pi$ for any $\mathfrak{p}$.} Thus, we have
	\begin{equation}\label{f-upperbound}
	\begin{aligned}
	\sum_{\mathrm{N}\mathfrak{n}\leq x}	f(\mathfrak{n})&=	\sum_{\substack{\mathrm{N}\mathfrak{n}_{1}\leq x \\ \mathfrak{n}_{1} \mid \mathfrak{q}_{\pi}^{\infty}}}	f(\mathfrak{n}_1) \sum_{\substack{\mathrm{N}\mathfrak{n}_{2}\leq x/\mathrm{N}\mathfrak{n}_{1} \\ (\mathfrak{n}_{2}, \mathfrak{q}_{\pi})=\cO_{F}}}	f(\mathfrak{n}_2)\\
	&\ll x (\log x)^{k-1} \prod_{\mathfrak{p}|\mathfrak{q}_{\pi}}\Big(1+O\Big(\frac{1}{\mathrm{N}\mathfrak{p}^{1-\delta}}\Big)\Big)\ll x (\log x)^{k-1}.
	\end{aligned}
	\end{equation}
	
	By \eqref{boundcoeff}, it is easy to see that $b_{F,\pi}(\mathfrak{n})^2, |\mu_\pi(\mathfrak{n})|^2$ and $d_F(\mathfrak{n})|\mu_\pi(\mathfrak{n})|^2$ are all $\ll \mathrm{N}\mathfrak{n}^{2\theta_{n}+\varepsilon}$. Note that $2\theta_{n}\leq 1-\frac{2}{n^{2}+1}$. Together this with Lemma \ref{ineq-lammucov}, we can take $f(\mathfrak{n})$ to be $b_{F,\pi}(\mathfrak{n})^2, |\mu_\pi(\mathfrak{n})|^2$ and $ d_F(\mathfrak{n})|\mu_\pi(\mathfrak{n})|^2$ with $k=4(n+1), 1$ and $n+1$, respectively. Thus, the estimate \eqref{f-upperbound} yields this lemma.
\end{proof}

\vskip 5mm

\section{Large-sieve type estimates}
In this section we obtain large sieve estimates for Dirichlet polynomials and $L$-functions. The main tool is the large sieve inequality for number fields introduced by Huxley \cite{Huxley-1970}.
\begin{lemma}\label{lem-largesieve}
	Let $c(\mathfrak{n})$ be any complex coefficients and define the Dirichlet polynomial to be
	$$D(s,\chi)=\sum_{\mathrm{N}\mathfrak{n}\leq x}\frac{c(\mathfrak{n})\chi(\mathfrak{n})}{{\mathrm{N}\mathfrak{n}}^s}.$$
Then we have
$$\sum_{\mathrm{N}\mathfrak{m}\leq Q} \frac{\mathrm{N}\mathfrak{m}}{\phi_F(\mathfrak{m})} \sideset{}{^*}{\sum}_{\chi\in\widehat{\mathrm{Cl}^{+}(\mathfrak{m})}}|D(s,\chi)|^2\ll (Q^2+x)\sum_{\mathrm{N}\mathfrak{n}\leq x}\Big|\frac{c(\mathfrak{n})}{\mathrm{N}\mathfrak{n}^s}\Big|^2, $$
where the second sum is over all primitive narrow ideal class character modulo $\mathfrak{m}$ and the implied constant depends on the number field.
\end{lemma}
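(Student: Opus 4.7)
The plan is to recognize this as an instance of Huxley's large sieve inequality for number fields \cite{Huxley-1970}, and to reduce the stated form to that theorem with essentially no new input. First, absorb the $\N\mathfrak{n}^{-s}$ factor into the coefficient by setting $a(\mathfrak{n}) := c(\mathfrak{n})/\N\mathfrak{n}^{s}$, which is legal because $s$ is fixed throughout and the right-hand side already involves $|c(\mathfrak{n})/\N\mathfrak{n}^{s}|^2$. The inequality then reduces to the standard multiplicative large sieve
\[
\sum_{\N\mathfrak{m}\leq Q}\frac{\N\mathfrak{m}}{\phi_{F}(\mathfrak{m})}\,\sideset{}{^{*}}\sum_{\chi\in\widehat{\mathrm{Cl}^{+}(\mathfrak{m})}}\Bigl|\sum_{\N\mathfrak{n}\leq x}a(\mathfrak{n})\chi(\mathfrak{n})\Bigr|^{2}\ll(Q^{2}+x)\sum_{\N\mathfrak{n}\leq x}|a(\mathfrak{n})|^{2},
\]
which is exactly Huxley's theorem once one interprets the sum over primitive narrow-class characters as primitive Hecke characters of conductor $\mathfrak{m}$.

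The key steps in Huxley's proof, which I would follow verbatim for the purpose of this section, are as follows. Using the Gauss-sum expansion
\[
\chi(\mathfrak{n})=\tau(\chi)^{-1}\sum_{\mathfrak{a}\bmod\mathfrak{m}}\bar\chi(\mathfrak{a})\,e_{\mathfrak{m}}(\mathfrak{a}\mathfrak{n}),
\]
valid for primitive $\chi\pmod{\mathfrak{m}}$ with $|\tau(\chi)|^{2}=\N\mathfrak{m}$, together with the identity $\frac{\N\mathfrak{m}}{\phi_{F}(\mathfrak{m})}\cdot\frac{1}{|\tau(\chi)|^{2}}=\frac{1}{\phi_{F}(\mathfrak{m})}$, one converts the multiplicative character sum into an additive exponential sum indexed by residue classes $\mathfrak{a}\bmod\mathfrak{m}$. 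The orthogonality of characters modulo $\mathfrak{m}$ and the weight $\N\mathfrak{m}/\phi_{F}(\mathfrak{m})$ together produce the analogue of the Farey fraction system: the ``points'' $\mathfrak{a}/\mathfrak{m}$ with $\N\mathfrak{m}\leq Q$ and $(\mathfrak{a},\mathfrak{m})=\cO_{F}$ are sufficiently spaced in a suitable fundamental domain that an additive large sieve applies.

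The additive large sieve for $\cO_{F}$, also proved by Huxley, takes the form
\[
\sum_{\N\mathfrak{m}\leq Q}\sum_{\substack{\mathfrak{a}\bmod\mathfrak{m}\\(\mathfrak{a},\mathfrak{m})=\cO_{F}}}\Bigl|\sum_{\N\mathfrak{n}\leq x}a(\mathfrak{n})\,e_{\mathfrak{m}}(\mathfrak{a}\mathfrak{n})\Bigr|^{2}\ll(Q^{2}+x)\sum_{\N\mathfrak{n}\leq x}|a(\mathfrak{n})|^{2},
\]
which is obtained via the duality principle (switching the roles of characters and coefficients) combined with a Selberg-type majorant on the archimedean side to handle the geometry of the unit group and the infinite places. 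Combining the Gauss-sum substitution with this additive inequality yields the claim with constant depending only on $F$.

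The only substantive obstacle is bookkeeping: one must verify that the weight $h(\mathfrak{m})/\phi_{F}(\mathfrak{m})$ arising in Theorems~\ref{thm 1.1} and \ref{thm-bv-integers} matches $\N\mathfrak{m}/\phi_{F}(\mathfrak{m})$ after orthogonality over $\widehat{\mathrm{Cl}^{+}(\mathfrak{m})}$ and that the unit contribution is absorbed correctly via \eqref{exactseq}; these are the same bookkeeping steps that motivated Huxley's choice of weight in the first place. Since all of this has been carried out in \cite{Huxley-1970}, the lemma follows by direct citation once the coefficient reduction in the first paragraph is noted.
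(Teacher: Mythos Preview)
Your proposal is correct and matches the paper's approach: the paper does not give an independent proof of this lemma but simply attributes it to Huxley \cite{Huxley-1970}, and your reduction (absorbing $\N\mathfrak{n}^{-s}$ into the coefficient, then invoking Huxley's multiplicative large sieve for number fields) is exactly the right way to see that the stated form follows. Your supplementary sketch of Huxley's argument via Gauss sums and the additive large sieve is accurate in outline, though it goes beyond what the paper itself provides.
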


To obtain large sieve inequality for $L$-functions, we use the approximation functional equation to approximate the $L$-functions in the critical strip by Dirichlet series. We state the approximate functional equation as in \cite[Theorem 5.3]{IK-2004}.
\begin{lemma}\label{lem 3.2}
	Let $X>0$ and $\pi\in \mathfrak{F}_n$. Then for $\Re s\in (0,1)$, we have
	$$
	L(s, \pi)=\sum_{\mathfrak{n}\subset \cO_F} \frac{\lambda_{\pi}(\mathfrak{n})}{{\mathrm{N}\mathfrak{n}}^s} V_{s}\Big(\frac{X\mathrm{N}\mathfrak{n}}{\sqrt{q(\pi)}}\Big)+\varepsilon(s,\pi)  \sum_{\mathfrak{n}\subset \cO_F} \frac{\bar{\lambda_{\pi}(\mathfrak{n})}}{{\mathrm{N}\mathfrak{n}}^{1-s}} V_{1-s}\Big(\frac{\mathrm{N}\mathfrak{n}}{X\sqrt{q(\pi)}}\Big),
	$$
	where
	$$\varepsilon(s,\pi)=\varepsilon(\pi)q(\pi)^{\frac{1}{2}-s}\frac{L_\infty(1-s,\tilde\pi)}{\L_\infty(s,\pi)}.$$
	Moreover, for any $A>0$, $V_{s}(y)$ is a function satisfying the following estimate
	$$V_{s}(y)\ll_{A}\Big(1+\frac{y}{\sqrt{q_{\infty}(\pi,\operatorname{Im} s)}}\Big)^{-A}.	$$
\end{lemma}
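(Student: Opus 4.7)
The plan is to follow the standard Mellin-transform derivation of the approximate functional equation (see e.g.\ the template in \cite{IK-2004}, \S5.2). First, pick an auxiliary test function $G(u)$ that is entire, even, of rapid decay in vertical strips, with $G(0)=1$ and $G(-u)=G(u)$; the canonical choice is $G(u)=e^{u^2}$. Define
\[
V_{s}(y):=\frac{1}{2\pi i}\int_{(3)}y^{-u}\frac{L_{\infty}(s+u,\pi)}{L_{\infty}(s,\pi)}\,G(u)\,\frac{du}{u}.
\]
The gamma-quotient is meromorphic, and by Stirling's formula its growth on vertical lines is polynomial, while $G(u)$ decays super-polynomially; shifting the line of integration to $\re u=A$ for any $A>0$ and bounding trivially gives the tail estimate $V_s(y)\ll_A(1+y/\sqrt{q_\infty(\pi,\im s)})^{-A}$ claimed in the lemma.

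Next I would consider the contour integral
\[
I(s):=\frac{1}{2\pi i}\int_{(3)}L(s+u,\pi)\,\frac{L_{\infty}(s+u,\pi)}{L_{\infty}(s,\pi)}\,\Big(\frac{\sqrt{q(\pi)}}{X}\Big)^{u}G(u)\,\frac{du}{u}.
\]
On $\re u=3$ I expand $L(s+u,\pi)$ as an absolutely convergent Dirichlet series and integrate term by term; this produces exactly the first sum
$\sum_{\mathfrak{n}}\lambda_{\pi}(\mathfrak{n})\mathrm{N}\mathfrak{n}^{-s}V_s(X\mathrm{N}\mathfrak{n}/\sqrt{q(\pi)})$. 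Then I shift the contour to $\re u=-3$. Since $\Lambda(s,\pi)$ is entire and $G(u)/u$ has only a simple pole at $u=0$, the residue at $u=0$ contributes $L(s,\pi)$.

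On the new line $\re u=-3$ I substitute $u\mapsto -u$ and apply the functional equation $\Lambda(s,\pi)=\varepsilon(\pi)\Lambda(1-s,\tilde\pi)$ to the completed $L$-function inside the integrand; this rewrites the integrand in terms of $L(1-s+u,\tilde\pi)$ multiplied by the ratio $\varepsilon(\pi)q(\pi)^{1/2-s}L_\infty(1-s,\tilde\pi)/L_\infty(s,\pi)=\varepsilon(s,\pi)$, together with a factor that assembles into the Mellin kernel defining $V_{1-s}$ at argument $\mathrm{N}\mathfrak{n}/(X\sqrt{q(\pi)})$. Expanding $L(1-s+u,\tilde\pi)=\sum_{\mathfrak{n}}\overline{\lambda_\pi(\mathfrak{n})}\mathrm{N}\mathfrak{n}^{-(1-s+u)}$ (using $\tilde\pi$-coefficients $\lambda_{\tilde\pi}(\mathfrak{n})=\overline{\lambda_\pi(\mathfrak{n})}$) and integrating termwise produces the second sum.

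The routine parts are the Mellin inversion and the term-by-term integration, both justified by the absolute convergence of the Dirichlet series for $\re u=3$ and the rapid decay of $G$. The main delicate step is the contour shift and the bookkeeping of the gamma quotients: one must verify that the finite part of $L_\infty$ in the shifted integrand combines cleanly with $q(\pi)^{1/2-s}$ and with the kernel of $V_{1-s}$, so that the archimedean ratio matches the definition of $\varepsilon(s,\pi)$ exactly. This is purely formal, but it is where a careless sign or misplaced $q(\pi)$-power would break the symmetry; once this is verified, the displayed identity follows, and the tail bound on $V_s$ is obtained as in the first paragraph by contour shifts to the right combined with Stirling bounds on the gamma quotient.
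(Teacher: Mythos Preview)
Your proposal is correct and follows exactly the standard Mellin-transform derivation of the approximate functional equation from \cite[\S5.2]{IK-2004}; the paper itself does not give an independent proof but simply cites \cite[Theorem 5.3]{IK-2004}, so your argument is precisely the content of that reference. (One cosmetic point: your ``canonical choice'' $G(u)=e^{u^{2}}$ does have the required super-polynomial decay on vertical lines since $|e^{(\sigma+it)^{2}}|=e^{\sigma^{2}-t^{2}}$, though in practice polynomials times such Gaussians, or bounded functions as in \cite{IK-2004}, are also common.)
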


We shall use these two lemmata to deduce the second moments of twisted automorphic $L$-functions.

\begin{proposition}\label{prop 4.3}
	For any real number $t$ and $Q>0$, we have
	\[
	\sum_{\mathrm{N}\mathfrak{m}\leq Q} \sideset{}{^*}{\sum}_{\chi\in\widehat{\mathrm{Cl}^{+}(\mathfrak{m})}}|L(\tfrac{1}{2}+it,\pi\otimes\chi)|^2 \ll(Q^2+Q^{\frac{n}{2}}(3+|t|)^{\frac{n[F:\Q]}{2}})(\log Q(3+|t|))^2,
	\]
	where the second sum is over all primitive narrow ideal class character modulo $\mathfrak{m}$. The implied constant depends on $F$ and $\pi$.
\end{proposition}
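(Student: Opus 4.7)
The strategy is the standard combination of the approximate functional equation with the large sieve. I would first apply Lemma \ref{lem 3.2} with $X=1$ to write
$$L(\tfrac{1}{2}+it,\pi\otimes\chi) \;=\; D_1(\chi) + \varepsilon(\tfrac{1}{2}+it,\pi\otimes\chi)\,D_2(\chi),$$
where each $D_j(\chi)$ is a Dirichlet series whose smooth cutoff has the form $V_{1/2\pm it}\big(\mathrm{N}\mathfrak{n}/\sqrt{q(\pi\otimes\chi)}\big)$. Since $|\varepsilon|=1$, the inequality $|L|^2\leq 2(|D_1|^2+|D_2|^2)$ together with the functional-equation symmetry between the two sums reduces the problem to the second moment of $D_1(\chi)$ alone. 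Uniformly for primitive $\chi\bmod\mathfrak{m}$ with $\mathrm{N}\mathfrak{m}\leq Q$, the conductor inequality \eqref{conductorpair} gives $q(\pi\otimes\chi)\, q_\infty(\pi\otimes\chi,t)\ll_\pi \mathrm{N}\mathfrak{m}^n(3+|t|)^{n[F:\mathbb{Q}]}$, so the rapid decay of $V$ in Lemma \ref{lem 3.2} lets me truncate the sum at $\mathrm{N}\mathfrak{n}\leq N := Q^{n/2}(3+|t|)^{n[F:\mathbb{Q}]/2}(\log Q(3+|t|))^{C}$ for a suitably large $C$, at negligible cost.

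To put $D_1(\chi)$ into the shape required by Lemma \ref{lem-largesieve}, I would invoke \eqref{tw-coeff-deco} and \eqref{tw-lfunction-2}: since $\chi$ is primitive modulo $\mathfrak{m}$, one has $\lambda_{\pi\otimes\chi}(\mathfrak{n}) = \lambda_{\pi}(\mathfrak{n})\chi(\mathfrak{n})$ whenever $(\mathfrak{n},\mathfrak{m})=\cO_F$, while the ramified local factors at primes $\mathfrak{p}\mid\mathfrak{m}$ produce only a finite-degree Euler correction whose contribution is uniformly bounded by \eqref{boundcoeff}. To decouple the $\chi$-dependence of the cutoff, I would then apply Mellin inversion to $V_{1/2+it}$, so that the factor $q(\pi\otimes\chi)^{w/2}$ pulls outside the sum over $\mathfrak{n}$ and leaves behind a Dirichlet polynomial $\sum\lambda_\pi(\mathfrak{n})\chi(\mathfrak{n})/\mathrm{N}\mathfrak{n}^{1/2+it+w}$ whose coefficients depend only on $\pi$. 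A dyadic partition of $[1,N]$ in the variable $\mathrm{N}\mathfrak{n}$ then permits Lemma \ref{lem-largesieve} to be applied on each block; dropping the weight $\mathrm{N}\mathfrak{m}/\phi_F(\mathfrak{m})\geq 1$ for an upper bound, the large sieve gives, for each dyadic range $K\leq N$,
$$\sum_{\mathrm{N}\mathfrak{m}\leq Q}\sideset{}{^*}{\sum}_{\chi\in\widehat{\mathrm{Cl}^{+}(\mathfrak{m})}}\Big|\sum_{\mathrm{N}\mathfrak{n}\sim K}\frac{\lambda_{\pi}(\mathfrak{n})\chi(\mathfrak{n})}{\mathrm{N}\mathfrak{n}^{1/2+it}}\Big|^2 \;\ll\; (Q^2+K)\sum_{\mathrm{N}\mathfrak{n}\sim K}\frac{|\lambda_{\pi}(\mathfrak{n})|^2}{\mathrm{N}\mathfrak{n}}.$$

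By Lemma \ref{ineq-2rs} combined with \eqref{asymRS} and partial summation, the inner sum is $O(1)$ on each dyadic block. Summing over the $O(\log N)$ dyadic scales and absorbing one further logarithm from the Mellin contour integration yields the desired bound $\ll (Q^2+N)(\log QN)^2$, and substituting the value of $N$ reproduces the claim. The principal technical obstacle is precisely the entanglement of the smooth cutoff $V_{1/2+it}$ with the character $\chi$ through the analytic conductor $q(\pi\otimes\chi)$, which obstructs a direct application of the large sieve; I expect this to be handled cleanly by the Mellin-inversion step, exactly as in the classical treatments of second-moment estimates for twisted $L$-functions on the critical line.
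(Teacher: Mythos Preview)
Your plan is correct and shares the paper's overall architecture: approximate functional equation, reduction to a single Dirichlet polynomial, conversion of $\lambda_{\pi\otimes\chi}$ to $\lambda_\pi\chi$ via \eqref{tw-coeff-deco}--\eqref{boundcoeff}, large sieve, and the Rankin--Selberg bound on $\sum|\lambda_\pi(\mathfrak{n})|^2$. The one genuine divergence is in how you remove the $\chi$-dependence of the weight $V_{1/2+it}\big(\mathrm{N}\mathfrak{n}/\sqrt{q(\pi\otimes\chi)}\big)$ before invoking Lemma~\ref{lem-largesieve}. You propose to reopen $V$ as a Mellin integral and pull the factor $q(\pi\otimes\chi)^{w/2}$ outside; the paper instead first partitions the family dyadically according to $q(\pi\otimes\chi)\sim Q_1$, then exploits the free parameter $X$ in Lemma~\ref{lem 3.2} by averaging $\int_1^2|\cdot|^2\,dX/X$ and substituting $X\mapsto X\,(Q_1/q(\pi\otimes\chi))^{1/2}$, which turns the argument of $V$ into $\mathrm{N}\mathfrak{n}/(X\sqrt{Q_1})$, uniform in $\chi$ on each block. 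Both devices are standard and yield the same final bound; the paper's avoids any contour work at the price of the extra dyadic split in $Q_1$ (this is where one of the two logarithms comes from), while your Mellin route is more direct but needs the remark that narrow ray class characters have only finitely many archimedean types, so that the gamma factors hidden in $V$ --- and hence $V$ itself --- depend on $\chi$ only through a finite case split.
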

\begin{proof}
By $q(\pi\otimes\chi)\sim Q_1$, we mean that $Q_1<q(\pi\otimes\chi) \leq 2Q_1$. Since $\chi$ is a character modulo $\mathfrak{m}$ and $\mathrm{N}\mathfrak{m}\leq Q$, it follows from \eqref{conductorpair} that $q(\pi\otimes\chi)\ll Q^n$. As a result,
\begin{align}\label{sec-1}
\sum_{\mathrm{N}\mathfrak{m}\leq Q} \sideset{}{^*}{\sum}_{\substack{\chi\in\widehat{\mathrm{Cl}^{+}(\mathfrak{m})} }}|L(\tfrac{1}{2}+it,\pi\otimes\chi)|^2
\ll (\log Q)\max_{Q_1\ll Q^n}\sum_{\mathrm{N}\mathfrak{m}\leq Q} \sideset{}{^*}{\sum}_{\substack{\chi\in\widehat{\mathrm{Cl}^{+}(\mathfrak{m})}\\ q(\pi\otimes\chi)\sim Q_1}}|L(\tfrac{1}{2}+it,\pi\otimes\chi)|^2 .
\end{align}
Since $\pi\otimes\chi \in \mathfrak{F}_n$ and $(\mathfrak{m},\mathfrak{q}_\pi)=\cO_F$, we obtain from \eqref{tw-lfunction-2} and Lemma \ref{lem 3.2} that
\begin{align*}
|L(\tfrac{1}{2}+it,\pi\otimes\chi)|^2
&\ll \Big|\sum_{\mathfrak{n}\subset \cO_F}\frac{\lambda_{\pi\otimes\chi}(\mathfrak{n})}{\mathrm{N}\mathfrak{n}^{1/2+it}}V_{\frac{1}{2}+it}\Big(\frac{\mathrm{N}\mathfrak{n}}{X\sqrt{q(\pi\otimes\chi)}}\Big) \Big|^2 \\
&+\Big|\sum_{\mathfrak{n}\subset \cO_F}\frac{\bar{\lambda_{\pi\otimes\chi}(\mathfrak{n})}}{\mathrm{N}\mathfrak{n}^{1/2-it}}V_{\frac{1}{2}-it}\Big(\frac{X\mathrm{N}\mathfrak{n}}{\sqrt{q(\pi\otimes\chi)}}\Big) \Big|^2 :=|D_1(X)|^2+|D_2(X)|^2
\end{align*}
for all $X>0$. As a result,
\begin{equation*}
|L(\tfrac{1}{2}+it,\pi\otimes\chi)|^2 \ll
\int_1^2|D_1(X)|^2\frac{dX}{X}+\int_1^2|D_2(X)|^2\frac{dX}{X}.
\end{equation*}
Note that the conductor $q(\pi\otimes\chi)\sim Q_1$. We denote $X_1=(Q_1/q(\pi\otimes\chi))^\frac{1}{2}$. We perform a change of variable $X\mapsto XX_1$ for the first integral, while $X\mapsto XX_1^{-1}$ for the second one. Consequently,
\begin{equation}\label{sec-2}
\begin{aligned}
|L(\tfrac{1}{2}+i t, \pi\otimes\chi)|^{2}
&\ll \int_{X_1}^{2X_1}\Big|\sum_{\mathfrak{n}\subset \cO_F}\frac{\lambda_{\pi\otimes\chi}(\mathfrak{n})}{\mathrm{N}\mathfrak{n}^{1/2+it}}V_{\frac{1}{2}+it}\bigg(\frac{\mathrm{N}\mathfrak{n}}{X\sqrt{Q_1}}\bigg)\Big|^{2}\frac{dX}{X} \\
&+\int_{X_1^{-1}}^{2X_1^{-1}}\Big|\sum_{\mathfrak{n}\subset \cO_F}\frac{\bar{\lambda_{\pi\otimes\chi}(\mathfrak{n})}}{\mathrm{N}\mathfrak{n}^{1/2-it}}V_{\frac{1}{2}-it}\bigg(\frac{X\mathrm{N}\mathfrak{n}}{\sqrt{Q_1}}\bigg)\Big|^{2}\frac{dX}{X}.
\end{aligned}
\end{equation}

For any $A>0$, we can see that $V_s(\frac{\mathrm{N}\mathfrak{n}}{X\sqrt{Q_1}})\ll \frac{(Q(3+|t|)^{[F:\Q]})^\frac{nA}{2}}{\mathrm{N}\mathfrak{n}^{A}}$ when $\mathrm{N}\mathfrak{n}>({Q(3+|t|)^{[F:\Q]}})^{\frac{n}{2}}$. For any sufficiently small $\varepsilon>0$, we can choose $A=A(\varepsilon)$ in Lemma \ref{lem 3.2} to such that
$$\sum_{\mathrm{N}\mathfrak{n}>({Q(3+|t|)^{[F:\Q]}})^{\frac{n}{2}+\varepsilon}}\frac{\lambda_{\pi\otimes\chi}(\mathfrak{n})}{\mathrm{N}\mathfrak{n}^{1/2+it}}V_{\frac{1}{2}+it}\bigg(\frac{\mathrm{N}\mathfrak{n}}{X\sqrt{Q_1}}\bigg)\ll 1.$$
When $\mathrm{N}\mathfrak{n}\leq ({Q(3+|t|)^{[F:\Q]}})^{\frac{n}{2}+\varepsilon}$, we shall make use of the large sieve inequality. For this, we first apply \eqref{boundcoeff} and the relation \eqref{tw-coeff-deco}, and then get
\begin{multline*}
\sum_{\mathrm{N}\mathfrak{n}\leq ({Q(3+|t|)^{[F:\Q]}})^{\frac{n}{2}+\varepsilon}}\frac{\lambda_{\pi\otimes\chi}(\mathfrak{n})}{\mathrm{N}\mathfrak{n}^{1/2+it}}V_{\frac{1}{2}+it}\bigg(\frac{\mathrm{N}\mathfrak{n}}{X\sqrt{Q_1}}\bigg)\\
=	\sum_{\substack{\mathrm{N}\mathfrak{n}\leq ({Q(3+|t|)^{[F:\Q]}})^{\frac{n}{2}+\varepsilon}\\ (\mathfrak{n},\mathfrak{q}_\pi)=\cO_F}}\frac{\lambda_{\pi}(\mathfrak{n})\chi(\mathfrak{n})}{\mathrm{N}\mathfrak{n}^{1/2+it}}V_{\frac{1}{2}+it}\bigg(\frac{\mathrm{N}\mathfrak{n}}{X\sqrt{Q_1}}\bigg)+O(N\mathfrak{q}_\pi^\varepsilon).
\end{multline*}

Accordingly, the contribution from the first term on the right-hand side of \eqref{sec-2} to the double sum on the right-hand side of \eqref{sec-1} is
\begin{multline}
\label{sed-onesum}
\sum_{\mathrm{N}\mathfrak{m}\leq Q} \sideset{}{^*}{\sum}_{\substack{\chi\in\widehat{\mathrm{Cl}^{+}(\mathfrak{m})}\\ q(\pi\otimes\chi)\sim Q_1}}\int_{X_1}^{2X_1}\Big|\sum_{\mathfrak{n}\subset \cO_F}\frac{\lambda_{\pi\otimes\chi}(\mathfrak{n})}{\mathrm{N}\mathfrak{n}^{1/2+it}}V_{\frac{1}{2}+it}\bigg(\frac{\mathrm{N}\mathfrak{n}}{X\sqrt{Q_1}}\bigg)\Big|^{2}\frac{dX}{X} \\
\ll  \sum_{\mathrm{N}\mathfrak{m}\leq Q} \sideset{}{^*}{\sum}_{\substack{\chi\in\widehat{\mathrm{Cl}^{+}(\mathfrak{m})}\\ q(\pi\otimes\chi)\sim Q_1}}\int_{X	_1}^{2X_1}\Big|\sum_{\substack{\mathrm{N}\mathfrak{n}\leq({Q(3+|t|)^{[F:\Q]}})^{\frac{n}{2}+\varepsilon}\\ (\mathfrak{n},\mathfrak{q}_\pi)=\cO_F}}\frac{\lambda_{\pi}(\mathfrak{n})\chi(\mathfrak{n})}{\mathrm{N}\mathfrak{n}^{1/2+it}}V_{\frac{1}{2}+it}\bigg(\frac{\mathrm{N}\mathfrak{n}}{X\sqrt{Q_1}}\bigg)\Big|^{2}\frac{dX}{X}+Q^2 \\
\ll  Q^2+\int_{X_1}^{2X_1}\sum_{\mathrm{N}\mathfrak{m}\leq Q} \sideset{}{^*}{\sum}_{\substack{\chi\in\widehat{\mathrm{Cl}^{+}(\mathfrak{m})}\\ q(\pi\otimes\chi)\sim Q_1}}\Big|\sum_{\substack{\mathrm{N}\mathfrak{n}\leq({Q(3+|t|)^{[F:\Q]}})^{\frac{n}{2}}\\ (\mathfrak{n},\mathfrak{q}_\pi)=\cO_F}}\frac{\lambda_{\pi}(\mathfrak{n})\chi(\mathfrak{n})}{\mathrm{N}\mathfrak{n}^{1/2+it}}V_{\frac{1}{2}+it}\bigg(\frac{\mathrm{N}\mathfrak{n}}{X\sqrt{Q_1}}\bigg)\Big|^{2}\frac{dX}{X} \\
+(\log QT)\max_{\frac{n}{2}\leq \frac{\log M}{\log(Q(3+|t|)^{[F:\Q]})}\leq \frac{n}{2}+\varepsilon}\int_{X_1}^{2X_1}\sum_{\mathrm{N}\mathfrak{m}\leq Q} \sideset{}{^*}{\sum}_{\substack{\chi\in\widehat{\mathrm{Cl}^{+}(\mathfrak{m})}\\ q(\pi\otimes\chi)\sim Q_1}}\Big|\sum_{\substack{\mathrm{N}\mathfrak{n}\sim M\\ (\mathfrak{n},\mathfrak{q}_\pi)=\cO_F}}\frac{\lambda_{\pi}(\mathfrak{n})\chi(\mathfrak{n})}{\mathrm{N}\mathfrak{n}^{1/2+it}}V_{\frac{1}{2}+it}\bigg(\frac{\mathrm{N}\mathfrak{n}}{X\sqrt{Q_1}}\bigg)\Big|^{2}\frac{dX}{X}.
\end{multline}

Moreover,  $V_{\frac{1}{2}+it}\big(\frac{\mathrm{N}\mathfrak{n}}{X\sqrt{Q_1}}\big)$ is bounded by $O(1)$ when $\mathrm{N}\mathfrak{n}\leq({Q(3+|t|)^{[F:\Q]}})^{\frac{n}{2}}$.
Thus, by Lemma \ref{lem-largesieve}, Lemma \ref{ineq-2rs} and \eqref{asymRS}, we have
\begin{equation}\label{sec-3}
\begin{aligned}
&\sum_{\mathrm{N}\mathfrak{m}\leq Q} \sideset{}{^*}{\sum}_{\substack{\chi\in\widehat{\mathrm{Cl}^{+}(\mathfrak{m})}\\ q(\pi\otimes\chi)\sim Q_1}}\Big|\sum_{\substack{\mathrm{N}\mathfrak{n}\leq({Q(3+|t|)^{[F:\Q]}})^{\frac{n}{2}}\\ (\mathfrak{n},\mathfrak{q}_\pi)=\cO_F}}\frac{\lambda_{\pi}(\mathfrak{n})\chi(\mathfrak{n})}{\mathrm{N}\mathfrak{n}^{1/2+it}}V_{\frac{1}{2}+it}\bigg(\frac{\mathrm{N}\mathfrak{n}}{X\sqrt{Q_1}}\bigg)\Big|^{2}\\
&\ll  (Q^2+Q^{\frac{n}{2}}(3+|t|)^{\frac{n[F:\Q]}{2}})\log Q(3+|t|).
\end{aligned}
\end{equation}
For $({Q(3+|t|)^{[F:\Q]}})^{\frac{n}{2}}\leq \mathrm{N}\mathfrak{n} \leq ({Q(3+|t|)^{[F:\Q]}})^{\frac{n}{2}+\varepsilon}$, we take $A=1$ in Lemma \ref{lem 3.2} to have $V_{\frac{1}{2}+it}(\frac{\mathrm{N}\mathfrak{n}}{X\sqrt{Q_1}})\ll \frac{({Q(3+|t|)^{[F:\Q]}})^{\frac{n}{2}}}{\mathrm{N}\mathfrak{n}}$.
Similar to the estimate \eqref{sec-3}, we get
\begin{equation}\label{sec-4}
\begin{aligned}
&\sum_{\mathrm{N}\mathfrak{m}\leq Q} \sideset{}{^*}{\sum}_{\substack{\chi\in\widehat{\mathrm{Cl}^{+}(\mathfrak{m})}\\ q(\pi\otimes\chi)\sim Q_1}}\Big|\sum_{\substack{\mathrm{N}\mathfrak{n}\sim M\\ (\mathfrak{n},\mathfrak{q}_\pi)=\cO_F}}\frac{\lambda_{\pi}(\mathfrak{n})\chi(\mathfrak{n})}{\mathrm{N}\mathfrak{n}^{1/2+it}}V_{\frac{1}{2}+it}\bigg(\frac{\mathrm{N}\mathfrak{n}}{X\sqrt{Q_1}}\bigg)\Big|^{2} \\
&\ll  (Q^2+M)Q^nT^{n[F:\Q]}\sum_{\mathrm{N}\mathfrak{n}\sim M}\frac{|\lambda_\pi(\mathfrak{n})|^2}{{\mathrm{N}\mathfrak{n}^3}}\\
&\ll (Q^2+M)\frac{({Q(3+|t|)^{[F:\Q]}})^{n}}{M^2}.
\end{aligned}
\end{equation}
Inserting \eqref{sec-3} and \eqref{sec-4} into \eqref{sed-onesum}, we have
\[
\sum_{\mathrm{N}\mathfrak{m}\leq Q} \sideset{}{^*}{\sum}_{\substack{\chi\in\widehat{\mathrm{Cl}^{+}(\mathfrak{m})}\\ q(\pi\otimes\chi)\sim Q_1}}\int_{X_1}^{2X_1}\Big|\sum_{\mathfrak{n}\subset \cO_F}\frac{\lambda_{\pi\otimes\chi}(\mathfrak{n})}{\mathrm{N}\mathfrak{n}^{1/2+it}}V_{\frac{1}{2}+it}\bigg(\frac{\mathrm{N}\mathfrak{n}}{X\sqrt{Q_1}}\bigg)\Big|^{2}\frac{dX}{X}	
\ll (Q^2+Q^{\frac{n}{2}}(3+|t|)^{\frac{n[F:\Q]}{2}})\log Q(3+|t|).
\]
We could treat the dual sum similarly and derive the contribution from the first term on the right-hand side of \eqref{sec-2} is also bounded by $O((Q^2+Q^{\frac{n}{2}}(3+|t|)^{\frac{n[F:\Q]}{2}})\log Q(3+|t|))$ Then this proposition follows.
\end{proof}

\vskip 5mm

\section{Type I sums:  Proof of Theorem \ref{thm-bv-integers}}

For technical convenience, one usually works with the weighted sum 
\begin{equation}\label{Ax-def}
	\psi_{\rho}(y,\pi,\mathfrak{m},\mathfrak{a}) :=\sum_{\substack{\mathrm{N}\mathfrak{n}\leq y \\ \mathfrak{n}\equiv \mathfrak{a} \text{ in } \mathrm{Cl}^{+}(\mathfrak{m})}}\lambda_\pi(\mathfrak{n}) \Big(1-\frac{\mathrm{N}\mathfrak{n}}{y}\Big)^{\rho},
\end{equation}
where $(\mathfrak{m},\mathfrak{a})=\cO_F$, $\rho\geq 0$.
 We want to prove a Bombieri--Vinogradov theorem with the smooth weight for $\lambda_\pi(\mathfrak{n})$. To be precise, we have
\begin{lemma}\label{lem-ap-smooth}
	Let $\eta=\max\{2,\frac{n}{2}\}$, $\rho=\lfloor\frac{n[F:\Q]}{4}\rfloor+1$, where $\lfloor\cdot\rfloor$ denotes the usual floor function.  If $A$ is any positive number, then we have
	$$\sum_{\mathrm{N}\mathfrak{m}\leq Q}\frac{h(\mathfrak{m})}{\phi_F(\mathfrak{m})}\max_{(\mathfrak{a},\mathfrak{m})=\cO_F}\max_{y\leq x}\Big|\psi_{\rho}(y,\pi,\mathfrak{m},\mathfrak{a}) \Big|\ll \frac{x}{(\log x)^A}, $$
	where $Q=x^{\frac{1}{\eta}}(\log x)^{-B}$ with $B=A+2n+3$.
\end{lemma}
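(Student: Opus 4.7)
The plan is to use character orthogonality on $\mathrm{Cl}^+(\mathfrak{m})$, convert each character sum to a Mellin integral of twisted $L$-functions, and then apply the second-moment estimate of Proposition \ref{prop 4.3}.

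For $(\mathfrak{a},\mathfrak{m})=\cO_F$, orthogonality of characters of $\mathrm{Cl}^+(\mathfrak{m})$ gives
$$\psi_\rho(y,\pi,\mathfrak{m},\mathfrak{a})=\frac{1}{h(\mathfrak{m})}\sum_{\chi\bmod\mathfrak{m}}\bar\chi(\mathfrak{a})\,A(y,\chi),\qquad A(y,\chi):=\sum_{\mathrm{N}\mathfrak{n}\leq y}\lambda_\pi(\mathfrak{n})\chi(\mathfrak{n})\bigl(1-\tfrac{\mathrm{N}\mathfrak{n}}{y}\bigr)^\rho,$$
with $\chi$ extended by zero to ideals not coprime to $\mathfrak{m}$. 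Pulling the maxima over $\mathfrak{a}$ and $y$ inside the character sum and absorbing $h(\mathfrak{m})/\phi_F(\mathfrak{m})$ reduces matters to bounding $\sum_{\mathrm{N}\mathfrak{m}\leq Q}\phi_F(\mathfrak{m})^{-1}\sum_{\chi\bmod\mathfrak{m}}\max_{y\leq x}|A(y,\chi)|$. For the trivial character $\chi_0$, since $\pi$ is cuspidal, $L(s,\pi)$ is entire with a standard zero-free region; a contour shift gives $|A(y,\chi_0)|\ll y\exp(-c\sqrt{\log y})$ uniformly in $\mathfrak{m}$, and the total contribution is $\ll x(\log x)^{-A}$ after summation.

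For each non-trivial $\chi$, let $\chi^*$ be the primitive Hecke character of conductor $\mathfrak{f}\mid\mathfrak{m}$ inducing $\chi$; a short inclusion--exclusion over primes dividing $\mathfrak{m}/\mathfrak{f}$ reduces $A(y,\chi)$ to the clean sum $B(y,\chi^*):=\sum_\mathfrak{n}\lambda_\pi(\mathfrak{n})\chi^*(\mathfrak{n})(1-\mathrm{N}\mathfrak{n}/y)_+^\rho$. Using the Mellin transform $\int_0^1(1-u)^\rho u^{s-1}du=\Gamma(s)\Gamma(\rho+1)/\Gamma(s+\rho+1)$ together with \eqref{tw-lfunction-2},
$$B(y,\chi^*)=\frac{\Gamma(\rho+1)}{2\pi i}\int_{(c)}L(s,\pi\otimes\chi^*)\mathcal{E}_{\chi^*}(s)\frac{\Gamma(s)}{\Gamma(s+\rho+1)}y^s\,ds,\qquad c>1,$$
where $\mathcal{E}_{\chi^*}(s):=\prod_{\mathfrak{p}\mid\mathfrak{f}}\prod_{j=1}^n(1-\alpha_{j,\pi\otimes\chi^*}(\mathfrak{p})\mathrm{N}\mathfrak{p}^{-s})$ is bounded by $\mathrm{N}\mathfrak{f}^\epsilon$ on $\Re s=\tfrac12$ by \eqref{boundcoeff}. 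Since $L(s,\pi\otimes\chi^*)$ is entire for non-trivial $\chi^*$, we shift the contour to $\Re s=\tfrac12$, and Stirling bounds the Gamma quotient by $(1+|t|)^{-\rho-1}$, yielding
$$|B(y,\chi^*)|\ll y^{1/2}\mathrm{N}\mathfrak{f}^\epsilon\int_\R\bigl|L(\tfrac12+it,\pi\otimes\chi^*)\bigr|(1+|t|)^{-\rho-1}\,dt.$$

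Reindexing by primitive conductor using $\sum_{\mathfrak{f}\mid\mathfrak{m},\,\mathrm{N}\mathfrak{m}\leq Q}\phi_F(\mathfrak{m})^{-1}\ll\phi_F(\mathfrak{f})^{-1}\log Q$ and applying Cauchy--Schwarz over the primitive characters together with Proposition \ref{prop 4.3} gives
$$\sum_{\mathrm{N}\mathfrak{f}\leq Q}\phi_F(\mathfrak{f})^{-1}\sideset{}{^*}\sum_{\chi^*\bmod\mathfrak{f}}|L(\tfrac12+it,\pi\otimes\chi^*)|\ll\bigl(Q+Q^{n/4}(3+|t|)^{n[F:\Q]/4}\bigr)(\log Q(3+|t|))^{O(1)}.$$
Integrating against $(1+|t|)^{-\rho-1}$, the choice $\rho=\lfloor n[F:\Q]/4\rfloor+1$ forces the resulting exponent $n[F:\Q]/4-\rho-1<-1$, making the $t$-integral convergent. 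Collecting, the non-trivial contribution is $\ll x^{1/2}(Q+Q^{n/4})(\log x)^{c_0}$ for an explicit $c_0$; setting $Q=x^{1/\eta}(\log x)^{-B}$ with $\eta=\max\{2,n/2\}$, this is $\ll x(\log x)^{-A}$ provided $B\geq A+2n+3$.

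The main obstacle is calibrating $\rho$: it must be large enough for the Gamma-quotient decay to overcome the conductor-aspect growth $(3+|t|)^{n[F:\Q]/4}$ inherited from Proposition \ref{prop 4.3}, yet the resulting smoothed bound must still be strong enough to feed (via partial summation) into a bound on the sharp-cutoff sum in Theorem \ref{thm-bv-integers}. Tracking the extraneous $(\log Q)^{O(1)}$ losses from the reindexing and the inclusion--exclusion with enough precision to obtain the polynomial value $B=A+2n+3$ is the remaining bookkeeping challenge.
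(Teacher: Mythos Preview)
Your proposal follows essentially the same route as the paper: character orthogonality, the Mellin representation of the Ces\`aro weight $(1-\N\mathfrak{n}/y)^\rho$ via $\Gamma(s)/\Gamma(s+\rho+1)$, a shift to $\Re(s)=\tfrac12$, and then Cauchy--Schwarz combined with Proposition~\ref{prop 4.3} to control the average over primitive characters, with $\rho=\lfloor n[F:\Q]/4\rfloor+1$ chosen precisely so that the $t$-integral converges.

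Two deviations deserve comment. First, your separate treatment of the principal character via a zero-free region is unnecessary: since $\pi$ is cuspidal, $L(s,\pi)$ is entire, and the paper simply treats all characters uniformly in the contour-shift argument. Second, your ``short inclusion--exclusion over primes dividing $\mathfrak{m}/\mathfrak{f}$'' to pass from $A(y,\chi)$ to $B(y,\chi^*)$ does not work as cleanly as you suggest, because $\lambda_\pi$ is multiplicative but not completely multiplicative, so the condition $\mathfrak{d}\mid\mathfrak{n}$ does not factor out of the sum. The paper instead keeps the full coprimality to $\mathfrak{m}$ in the Dirichlet series and writes it directly (via \eqref{tw-lfunction-2}) as $L(s,\pi\otimes\chi_1)$ times a finite Euler product over $\mathfrak{p}\mid\mathfrak{m}$, bounded on $\Re(s)=\tfrac12$ by $d_F(\mathfrak{m})^{2n}$. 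This $d_F(\mathfrak{m})^{2n}$ weight is what produces, after the reindexing $\mathfrak{m}=\mathfrak{r}\mathfrak{m}'$ and a Cauchy--Schwarz pairing with $\sum d_F(\mathfrak{m}')^{4n}/\phi_F(\mathfrak{m}')$, the precise exponent $B=A+2n+3$ that you identify as the remaining bookkeeping challenge.
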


\begin{proof}
Detecting the congruence condition in \eqref{Ax-def} by the multiplicative characters
$\chi \in \widehat{\mathrm{Cl}^{+}(\mathfrak{m})},$ we obtain the identity
\begin{equation}\label{smoothed-1}
\sum_{\substack{\mathrm{N}\mathfrak{n}\leq y \\ \mathfrak{n}\equiv \mathfrak{a} \text{ in } \mathrm{Cl}^{+}(\mathfrak{m})}}\lambda_\pi(\mathfrak{n}) \Big(1-\frac{\mathrm{N}\mathfrak{n}}{y}\Big)^{\rho}=\frac{1}{h(\mathfrak{m})}\sum_{\chi \in \widehat{\mathrm{Cl}^{+}(\mathfrak{m})} }\overline{\chi}(\mathfrak{a})\sum_{\mathrm{N}\mathfrak{n}\leq y}\lambda_\pi(\mathfrak{n}) \chi(\mathfrak{n})\Big(1-\frac{\mathrm{N}\mathfrak{n}}{y}\Big)^{\rho}.
\end{equation}
We  shall treat the innermost sum on the right-hand side of \eqref{smoothed-1} by the technique of standard contour integration, which could give a direct link between the summation associated to an arithmetic function and the corresponding Dirichlet series.  If $\rho$ is any positive integer and $c>0,$ then we have the Mellin inversion formula
\begin{equation}\label{perron}
\frac{1}{2\pi i} \int_{(c)}\frac{x^s}{s(s+1)\cdots (s+\rho)}ds=
\left\{
\begin{array}{ll}
\frac{1}{\rho!}(1-\frac{1}{x})^\rho  & \hbox{if $x \geq 1$},\\

0 & \hbox{if $0 \leq x \leq 1$.}
\end{array}
\right.
\end{equation}
Then it follows from \eqref{perron} that
\[
\sum_{\mathrm{N}\mathfrak{n}\leq y}\lambda_\pi(\mathfrak{n}) \chi(\mathfrak{n})\Big(1-\frac{\mathrm{N}\mathfrak{n}}{y}\Big)^{\rho}=\frac{1}{2\pi i}\int_{(1+\varepsilon)}\frac{\Gamma(s)}{\Gamma(\rho+1+s)}	\Big(\sum_{\mathfrak{n}\subset \cO_F}\frac{\lambda_{\pi}(\mathfrak{n})\chi(\mathfrak{n})}{{\mathrm{N}\mathfrak{n}}^s}\Big) y^{s}ds.
\]
If $\chi$ is induced by a primitive character $\chi_1 (\text{mod}\; \mathfrak{m}^\prime),$ then $\mathfrak{m}^\prime|\mathfrak{m}$. We further get from \eqref{tw-lfunction-2} that
\begin{multline}\label{DS-L}
\sum_{\mathfrak{n}\subset \cO_F}\frac{\lambda_{\pi}(\mathfrak{n})\chi(\mathfrak{n})}{{\mathrm{N}\mathfrak{n}}^s}=\Big(\prod_{\mathfrak{p}}\prod_{j=1}^n\Big(1-\frac{\alpha_{j,\pi}(\mathfrak{p})\chi_1(\mathfrak{p})}{\N\mathfrak{p}^{s}}\Big)^{-1}\Big)\prod_{\mathfrak{p}|\mathfrak{m}}\prod_{j=1}^n\Big(1-\frac{\alpha_{j,\pi}(\mathfrak{p})\chi_1(\mathfrak{p})}{\N\mathfrak{p}^{s}}\Big)\\
=L(s, \pi\otimes \chi_1)\Big(\prod_{\mathfrak{p}|\mathfrak{m}^\prime}\prod_{j=1}^{n}\Big(1-\frac{\alpha_{j,\pi\otimes\chi}(\mathfrak{p}) }{\N\mathfrak{p}^{s}}\Big)\Big)\prod_{\mathfrak{p}|\mathfrak{m}}\prod_{j=1}^n\Big(1-\frac{\alpha_{j,\pi}(\mathfrak{p})\chi_1(\mathfrak{p})}{\N\mathfrak{p}^{s}}\Big).
\end{multline}
Due to the estimate \eqref{boundcoeff}, for any $\varepsilon>0,$
$$\Big(\prod_{\mathfrak{p}|\mathfrak{m}^\prime}\prod_{j=1}^{n}\Big(1-\frac{\alpha_{j,\pi\otimes\chi}(\mathfrak{p}) }{\N\mathfrak{p}^{s}}\Big)\Big)\prod_{\mathfrak{p}|\mathfrak{m}}\prod_{j=1}^n\Big(1-\frac{\alpha_{j,\pi}(\mathfrak{p})\chi_1(\mathfrak{p})}{\N\mathfrak{p}^{s}}\Big)\ll d_F(\mathfrak{m})^{2n}$$
at $\Re s=1/2$.
Applying the analytic properties of $L(s, \pi \otimes \chi_1)$ and then moving the line of integration to $\Re(s)=1/2$. Thus, by the residue theorem, it is bounded by
\[
d_F(\mathfrak{m})^{2n} y^{\frac{1}{2}}\int_{(1/2)}|L(s, \pi \otimes \chi_1)|\frac{|ds|}{|s|^{\rho+1}}.
\]
Gathering these estimates, we then have
\begin{equation}\label{mean-rho}
\begin{aligned}
&\sum_{\mathrm{N}\mathfrak{m}\leq Q}\frac{h(\mathfrak{m})}{\phi_F(\mathfrak{m})}\max_{(\mathfrak{a},\mathfrak{m})=\cO_F}\max_{y\leq x}\Big|\sum_{\substack{\mathrm{N}\mathfrak{n}\leq y \\ \mathfrak{n}\equiv \mathfrak{a} \text{ in } \mathrm{Cl}^{+}(\mathfrak{m})}}\lambda_\pi(\mathfrak{n}) \Big(1-\frac{\mathrm{N}\mathfrak{n}}{y}\Big)^{\rho}\Big|\\
&\ll x^{\frac{1}{2}}\int_{(1/2)}\sum_{\mathrm{N}\mathfrak{m}\leq Q}\frac{d_F(\mathfrak{m})^{2n}}{\phi_F(\mathfrak{m})}\sum_{\mathfrak{m^\prime}|\mathfrak{m}} \;\sideset{}{^*}{\sum}_{\chi_1 \in \widehat{\mathrm{Cl}^{+}(\mathfrak{m^\prime})} }	|L(s, \pi \otimes \chi_1)|\frac{|ds|}{|s|^{\rho+1}}\\
&\ll x^{\frac{1}{2}}\int_{(1/2)}\sum_{\mathrm{N}\mathfrak{r}\leq Q}\frac{d_F(\mathfrak{r})^{2n}}{\phi_F(\mathfrak{r})}\sum_{\N\mathfrak{m^\prime}\leq Q/\mathrm{N}\mathfrak{r}} \frac{d_F(\mathfrak{m^\prime})^{2n}}{\phi_F(\mathfrak{m^\prime})}\;\sideset{}{^*}{\sum}_{\chi_1 \in \widehat{\mathrm{Cl}^{+}(\mathfrak{m^\prime})} }	|L(s, \pi \otimes \chi_1)|\frac{|ds|}{|s|^{\rho+1}},
\end{aligned}
\end{equation}	
where the trivial inequalities $d_F(\mathfrak{bc})\leq d_F(\mathfrak{b})d_F(\mathfrak{c})$ and $\phi_F(\mathfrak{bc})\geq \phi_F(\mathfrak{b})\phi_F(\mathfrak{c})$. are used in the last step.
By Proposition \ref{prop 4.3}, the Cauchy--Schwarz inequality and the elementary estimate $\sum_{\N\mathfrak{n}\leq x} d_F(\mathfrak{n})^k\ll x(\log x)^{2^k-1}$ for any positive integer $k$, we have
\[
\begin{aligned}
&\sum_{\N\mathfrak{m^\prime}\leq Q/\mathrm{N}\mathfrak{r}} \frac{d_F(\mathfrak{m^\prime})^{2n}}{\phi_F(\mathfrak{m^\prime})}\sideset{}{^*}{\sum}_{\chi_1 \in \widehat{\mathrm{Cl}^{+}(\mathfrak{m^\prime})} }	|L(s, \pi \otimes \chi_1)|  \\
&\ll(\log Q)^2 \max_{R\leq Q/\mathrm{N}\mathfrak{r}} \frac{1}{R} \sum_{\mathrm{N}\mathfrak{m^\prime}\sim R}d_F(\mathfrak{m^\prime})^{2n}\sideset{}{^*}{\sum}_{\chi \in \widehat{\mathrm{Cl}^{+}(\mathfrak{m})} }	|L(s, \pi \otimes \chi)|\\
&\ll(\log Q)^{2n+2}\max_{R\leq Q/\mathrm{N}\mathfrak{r}} \Big(\sum_{\mathrm{N}\mathfrak{m}\sim R}\sum_{\chi \in \widehat{\mathrm{Cl}^{+}(\mathfrak{m})} }	|L(s, \pi \otimes \chi)|^2\Big)^{\frac{1}{2}} \\
&\ll\Big(\frac{Q}{\mathrm{N}\mathfrak{r}}+\big(\frac{Q}{\mathrm{N}\mathfrak{r}}\big)^{\frac{n}{4}}\Big)|s|^{\frac{n[F:\Q]}{4}}(\log Q|s|)^{2n+3}.
\end{aligned}
\]	
The lemma follows once we insert this estimate into \eqref{mean-rho}.
\end{proof}

\begin{proof}[Proof of Theorem \ref{thm-bv-integers}]
Now we turn to proof of Theorem \ref{thm-bv-integers}. To do this, we follow the method of \cite[Lemma 2]{Motohashi-1980}, which is originally due to \cite{Gallagher-1968}.
Let $z=z(y)=\frac{y}{(\log y)^{A/2}}$ with the parameter $A$ as in Lemma \ref{lem-ap-smooth}. It is easy to check that
\begin{align}\label{int-short-z}
\int_{y-z}^{y}t^{\rho-1}\psi_{\rho-1}(t,\pi,\mathfrak{m},\mathfrak{a})dt
=\frac{y^\rho}{\rho}\psi_{\rho}(y,\pi,\mathfrak{m},\mathfrak{a})-\frac{(y-z)^\rho}{\rho}\psi_{\rho}(y-z,\pi,\mathfrak{m},\mathfrak{a}).
\end{align}
We can rewrite this integral on the left-hand side of \eqref{int-short-z} as
\begin{equation}\label{int-short-z2}
zy^{\rho-1}\psi_{\rho-1}(y,\pi,\mathfrak{m},\mathfrak{a})-\int_{y-z}^y\Big(y^{\rho-1}\psi_{\rho-1}(y,\pi,\mathfrak{m},\mathfrak{a})-t^{\rho-1}\psi_{\rho-1}(t,\pi,\mathfrak{m},\mathfrak{a})\Big)dt.
\end{equation}
It is obvious that the integrand of \eqref{int-short-z2} equals to
\begin{equation}\label{intfun2}
\begin{aligned}
&\sum_{\substack{\mathrm{N}\mathfrak{n}\leq t \\ \mathfrak{n}\equiv\mathfrak{a} \text{ in } \mathrm{Cl}^{+}(\mathfrak{m})}}\lambda_{\pi}(\mathfrak{n})\Big((y-\mathrm{N}\mathfrak{n})^{\rho-1}-(t-\mathrm{N}\mathfrak{n})^{\rho-1}\Big)+\sum_{\substack{t<\mathrm{N}\mathfrak{n}\leq y \\ \mathfrak{n}\equiv\mathfrak{a} \text{ in } \mathrm{Cl}^{+}(\mathfrak{m})}}	\lambda_\pi(\mathfrak{n})(y-\mathrm{N}\mathfrak{n})^{\rho-1} \\
&\ll zy^{\rho-2}\sum_{\substack{\mathrm{N}\mathfrak{n}\leq y \\ \mathfrak{n}\equiv\mathfrak{a} \text{ in } \mathrm{Cl}^{+}(\mathfrak{m})}}|\lambda_\pi(\mathfrak{n})|
\end{aligned}
\end{equation}
for $y-z\leq t\leq y$. Then it follows from \eqref{int-short-z}--\eqref{intfun2} that
\begin{align*}
\psi_{\rho-1}(y,\pi,\mathfrak{m},\mathfrak{a})=&\frac{y}{z\rho}\psi_{\rho}(y,\pi,\mathfrak{m},\mathfrak{a})-\frac{(y-z)^\rho}{zy^{\rho-1}\rho}\psi_{\rho}(y-z,\pi,\mathfrak{m},\mathfrak{a}) 
+O\Big(\frac{z}{y}\sum_{\substack{\mathrm{N}\mathfrak{n}\leq y \\ \mathfrak{n}\equiv\mathfrak{a} \text{ in } \mathrm{Cl}^{+}(\mathfrak{m})}}|\lambda_\pi(\mathfrak{n})|\Big).
\end{align*}
As a result, we can estimate the sum involving $\psi_{\rho-1}(y,\pi,\mathfrak{m},\mathfrak{a})$ as follows
\begin{equation}\label{bv-mm1}
	\begin{aligned}		
		&\sum_{\mathrm{N}\mathfrak{m}\leq Q} \frac{h(\mathfrak{m})}{\phi_F(\mathfrak{m})}\max_{(\mathfrak{a},\mathfrak{m})=\cO_F}\max_{y\leq x}\Big|\psi_{\rho-1}(y,\pi,\mathfrak{m},\mathfrak{a})\Big| \\
		&\ll (\log x)^{\frac{A}{2}}\sum_{\mathrm{N}\mathfrak{m}\leq Q} \frac{h(\mathfrak{m})}{\phi_F(\mathfrak{m})}\max_{(\mathfrak{a},\mathfrak{m})=\cO_F}\max_{y\leq x}\Big|\psi_{\rho}(y,\pi,\mathfrak{m},\mathfrak{a})\Big|\\
	&	+ \sum_{\mathrm{N}\mathfrak{m}\leq Q}\frac{h(\mathfrak{m})}{\phi_F(\mathfrak{m})} \max_{(\mathfrak{a},\mathfrak{m})=\cO_F}\max_{y\leq x}\,(\log y)^{-\frac{A}{2}}\sum_{\substack{\mathrm{N}\mathfrak{n}\leq y \\ \mathfrak{n}\equiv\mathfrak{a} \text{ in } \mathrm{Cl}^{+}(\mathfrak{m})}}|\lambda_\pi(\mathfrak{n})|.
	\end{aligned}
\end{equation}
Note that the contribution of the term involving $\psi_{\rho}(y-z,\pi,\mathfrak{m},\mathfrak{a}) $ can be dominated by the first term of the right-hand side of \eqref{bv-mm1}, since we have taken the maximum over $y\leq x$.

We first treat the last term on the right-hand side of \eqref{bv-mm1}. By Lemma \ref{ineq-2rs} and the Rankin--Selberg theory, we can get $\sum_{\substack{\mathrm{N}\mathfrak{n}\leq x }}|\lambda_{\pi}(\mathfrak{n})|\leq x^\frac{1}{2}\sum_{\substack{\mathrm{N}\mathfrak{n}\leq x }}|\lambda_{\pi}(\mathfrak{n})|^2\ll x$. As a result,
\begin{equation}\label{LS-ABS}
\begin{aligned}
&\sum_{\mathrm{N}\mathfrak{m}\leq Q}\frac{h(\mathfrak{m})}{\phi_F(\mathfrak{m})} \max_{(\mathfrak{a},\mathfrak{m})=\cO_F}\max_{y\leq x}\,(\log y)^{-\frac{A}{2}}\sum_{\substack{\mathrm{N}\mathfrak{n}\leq y \\ \mathfrak{n}\equiv\mathfrak{a} \text{ in } \mathrm{Cl}^{+}(\mathfrak{m})}}|\lambda_\pi(\mathfrak{n})| \\
\ll &\sum_{\mathrm{N}\mathfrak{m}\leq Q}\frac{h(\mathfrak{m})}{\phi_F(\mathfrak{m})} \max_{(\mathfrak{a},\mathfrak{m})=\cO_F}\max_{x^{\frac{1}{3}}\leq y\leq x}\,(\log y)^{-\frac{A}{2}}\sum_{\substack{\mathrm{N}\mathfrak{n}\leq y \\ \mathfrak{n}\equiv\mathfrak{a} \text{ in } \mathrm{Cl}^{+}(\mathfrak{m})}}|\lambda_\pi(\mathfrak{n})|+x^\frac{5}{6} \\
\ll &(\log x)^{-\frac{A}{2}}\sum_{\mathrm{N}\mathfrak{m}\leq Q} \frac{h(\mathfrak{m})}{\phi_F(\mathfrak{m})}\max_{(\mathfrak{a},\mathfrak{m})=\cO_F}\sum_{\substack{\mathrm{N}\mathfrak{n}\leq x \\ \mathfrak{n}\equiv\mathfrak{a} \text{ in } \mathrm{Cl}^{+}(\mathfrak{m})}}|\lambda_{\pi}(\mathfrak{n})|+x^\frac{5}{6}
\end{aligned}
\end{equation}
since $Q\ll X^\frac{1}{2}$. Using the Cauchy--Schwarz inequality and Lemma \ref{lem-largesieve},  we obtain
\begin{equation*}
\begin{aligned}
&\sum_{\mathrm{N}\mathfrak{m}\leq Q} \frac{h(\mathfrak{m})}{\phi_F(\mathfrak{m})}\max_{(\mathfrak{a},\mathfrak{m})=\cO_F}\sum_{\substack{\mathrm{N}\mathfrak{n}\leq x \\ \mathfrak{n}\equiv\mathfrak{a} \text{ in } \mathrm{Cl}^{+}(\mathfrak{m})}}|\lambda_{\pi}(\mathfrak{n})|\\
&\ll (\log Q)^4\max_{CR\leq Q}\frac{1}{CR} \sum_{\mathrm{N}\mathfrak{r}\sim R}\sum_{\mathrm{N}\mathfrak{c}\sim C}\sideset{}{^*}{\sum}_{\chi\in\widehat{\mathrm{Cl}^{+}(\mathfrak{c})}}\Big|\sum_{\substack{\mathrm{N}\mathfrak{n}\leq x \\(\mathfrak{n},\mathfrak{cr})=\cO_F}}|\lambda_{\pi}(\mathfrak{n})|\chi(\mathfrak{n})\Big| \\
&\ll (\log Q)^4\max_{CR\leq Q}\frac{1}{R} \sum_{\mathrm{N}\mathfrak{r}\sim R}\Big(\sum_{\mathrm{N}\mathfrak{c}\sim C}\sideset{}{^*}{\sum}_{\chi\in\widehat{\mathrm{Cl}^{+}(\mathfrak{c})}}\Big|\sum_{\substack{\mathrm{N}\mathfrak{n}\leq x \\(\mathfrak{n},\mathfrak{cr})=\cO_F}}|\lambda_{\pi}(\mathfrak{n})|\chi(\mathfrak{n})\Big|^2 \Big)^{\frac{1}{2}}\\
&\ll (\log Q)^4 (Q+x^{\frac{1}{2}})x^{\frac{1}{2}}\ll x(\log x)^4.
\end{aligned}
\end{equation*}
Substituting this into \eqref{LS-ABS}, we bound the contribution of the last term on the right-hand side of \eqref{bv-mm1} by $\ll x(\log x)^{4-\frac{A}{2}}$.

By Lemma \ref{lem-ap-smooth}, we see that the first term in the right-hand side of \eqref{bv-mm1} is $O(x(\log x)^{-\frac{A}{2}})$. Thus, we conclude from the above that
\begin{align*}
\sum_{\mathrm{N}\mathfrak{m}\leq Q}\frac{h(\mathfrak{m})}{\phi_F(\mathfrak{m})} \max_{(\mathfrak{a},\mathfrak{m})=\cO_F}\max_{y\leq X}\Big|\psi_{\rho-1}(y,\pi,\mathfrak{m},\mathfrak{a})\Big|\ll \frac{x}{(\log x)^{\frac{A}{2}-4}}
\end{align*}
for $Q=X^\frac{1}{\eta}(\log x)^{-B}$ where $\eta=\max\{2,\frac{n}{2}\}, B=A+2n+3$ and the implied constant depends on $\pi$,$F$ and $A$. Repeating this process $\rho$ times yields
\begin{equation*}
\sum_{\mathrm{N}\mathfrak{m}\leq Q}\frac{h(\mathfrak{m})}{\phi_F(\mathfrak{m})} \max_{(\mathfrak{a},\mathfrak{m})=\cO_F}\max_{y\leq X}\Big|\psi_{0}(y,\pi,\mathfrak{m},\mathfrak{a})\Big|\ll \frac{X}{(\log X)^{\frac{A+8}{2^{\rho}}-8} }.
\end{equation*}
Finally, we transform parameter $\frac{A+8}{2^{\rho}}-8$ to $A$, finishing the proof.
\end{proof}


\vskip 5mm

\section{A bilinear form}

\begin{lemma}\label{lem-Vaughan}
	Let $\mathcal{A}=\{a(\mathfrak{l})\}$ and $\mathcal{B}=\{b(\mathfrak{n})\}$ be two sequences of complex numbers with $\mathcal{B}$ satisfying the following Siegel--Walfisz hypothesis: For $(\mathfrak{a},\mathfrak{m})=\cO_F$ and $\N \mathfrak{m}\leq (\log N)^{A}$ for any $A>0$,
	\begin{equation}\label{Siegel-W-condition}
	\sum_{\substack{\mathrm{N}\mathfrak{n}\leq N \\ \mathfrak{n}\equiv\mathfrak{a} \text{ in } \mathrm{Cl}^{+}(\mathfrak{m})}}b(\mathfrak{n})\ll \frac{(\mathcal{B}(N)N)^{\frac{1}{2}}}{(\log N)^{9A}}.
	\end{equation}
	Then we have
	\begin{equation}\label{eq-bilinear}
	\begin{aligned}
	&\sum_{\mathrm{N}\mathfrak{m}\leq Q}\frac{h(\mathfrak{m})}{\phi_F(\mathfrak{m})} \max_{(\mathfrak{a},\mathfrak{m})=\cO_F}\Big| {\underset{\mathfrak{ln}\equiv\mathfrak{a} \text{ in } \mathrm{Cl}^{+}(\mathfrak{m})}{\sum_{\mathrm{N}\mathfrak{l}\leq L}\sum_{\mathrm{N}\mathfrak{n}\leq N}}}a(\mathfrak{l})b(\mathfrak{n}) \Big|\\
	&\ll	\Big(Q+\sqrt{L+N}+\frac{\sqrt{L N}}{(\log N)^{A}}\Big) (\log Q)^2\mathcal{A}(L)^{\frac{1}{2}}\mathcal{B}(N)^{\frac{1}{2}},
	\end{aligned}
	\end{equation}
	where
	$$
	\mathcal{A}(L)=\sum_{\mathrm{N}\mathfrak{l}\leq L}|a(\mathfrak{l})|^{2},\quad \mathcal{B}(N)=\sum_{\mathrm{N}\mathfrak{n}\leq N}|b(\mathfrak{n})|^{2},
	$$
	and the implied constant depends only on $A$.

\end{lemma}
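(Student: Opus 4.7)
The proof will follow the template of bilinear-form estimates in arithmetic progressions \cite[Theorem 17.4]{IK-2004}, adapted to the narrow ray class group setting. First I would apply orthogonality of characters: for $(\mathfrak{a},\mathfrak{m})=\cO_F$,
\[
\sum_{\substack{\N\mathfrak{l}\leq L,\,\N\mathfrak{n}\leq N\\ \mathfrak{l}\mathfrak{n} \equiv \mathfrak{a} \text{ in } \mathrm{Cl}^{+}(\mathfrak{m})}} a(\mathfrak{l}) b(\mathfrak{n}) = \frac{1}{h(\mathfrak{m})} \sum_{\chi \in \widehat{\mathrm{Cl}^{+}(\mathfrak{m})}} \bar{\chi}(\mathfrak{a})\, A(\chi)\, B(\chi),
\]
with $A(\chi) := \sum_{\N\mathfrak{l} \leq L} a(\mathfrak{l})\chi(\mathfrak{l})$ and $B(\chi) := \sum_{\N\mathfrak{n}\leq N} b(\mathfrak{n})\chi(\mathfrak{n})$. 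Taking absolute values and using the bound $h(\mathfrak{m})/\phi_F(\mathfrak{m}) = O(1)$ that falls out of the exact sequence \eqref{exactseq}, the left-hand side of \eqref{eq-bilinear} is majorised by $\sum_{\N\mathfrak{m}\leq Q} \phi_F(\mathfrak{m})^{-1} \sum_{\chi} |A(\chi)B(\chi)|$. I would then split the outer sum at the threshold $M_0 := (\log N)^{A+1}$, treating small and large moduli by very different methods.

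For $\N\mathfrak{m} \leq M_0$ I would bypass characters altogether. Writing the original double sum as $\sum_{(\mathfrak{l},\mathfrak{m})=\cO_F} a(\mathfrak{l}) \sum_{\mathfrak{n} \equiv \mathfrak{a}\mathfrak{l}^{-1}} b(\mathfrak{n})$, the Siegel--Walfisz hypothesis \eqref{Siegel-W-condition} (applied with parameter $A+1$) bounds each inner sum by $\sqrt{\mathcal{B}(N)N}/(\log N)^{9(A+1)}$. Applying Cauchy--Schwarz in $\mathfrak{l}$ and summing over $\N\mathfrak{m} \leq M_0$ with the bounded weight produces a total contribution of $O\!\bigl(M_0\, \sqrt{LN\,\mathcal{A}(L)\mathcal{B}(N)}/(\log N)^{9(A+1)}\bigr)$, comfortably absorbed into the $\sqrt{LN}/(\log N)^A$ term of the target estimate.

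For $M_0 < \N\mathfrak{m} \leq Q$ I would apply Cauchy--Schwarz to separate $|A(\chi)B(\chi)|$ into $\ell^2$-norms, perform a dyadic decomposition $\N\mathfrak{m} \sim M$, and pass to primitive inducing characters by M\"obius inversion over the divisors of $\mathfrak{m}/\mathfrak{d}$ (where $\mathfrak{d}$ is the conductor of $\chi$). After this bookkeeping, Lemma \ref{lem-largesieve} delivers
\[
\sum_{\N\mathfrak{m}\sim M} \frac{\N\mathfrak{m}}{\phi_F(\mathfrak{m})} \sideset{}{^*}\sum_{\chi \bmod \mathfrak{m}} |A(\chi)|^2 \;\ll\; (M^2 + L)\,\mathcal{A}(L),
\]
together with the analogue for $B$. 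Converting the weight $\N\mathfrak{m}/\phi_F(\mathfrak{m})$ back to $1/\phi_F(\mathfrak{m})$ costs a factor $M^{-1}$, so each dyadic block contributes $\ll M^{-1}\sqrt{(M^2+L)(M^2+N)\,\mathcal{A}\mathcal{B}} \ll (M + \sqrt{L} + \sqrt{N} + \sqrt{LN}/M)\sqrt{\mathcal{A}\mathcal{B}}$, and summing over dyadic $M \in (M_0,Q]$ yields the target shape $\bigl(Q+\sqrt{L+N}+\sqrt{LN}/M_0\bigr)(\log Q)^2\sqrt{\mathcal{A}\mathcal{B}}$. The main technical difficulty is executing the M\"obius unfolding cleanly: the coprimality conditions $(\mathfrak{l},\mathfrak{m})=(\mathfrak{n},\mathfrak{m})=\cO_F$ hidden inside $A(\chi)$ and $B(\chi)$ must be traded for applications of Lemma \ref{lem-largesieve} on the shorter sequences $\{a(\mathfrak{e}\mathfrak{l}')\}$ and $\{b(\mathfrak{e}\mathfrak{n}')\}$ indexed by $\mathfrak{e}\mid\mathfrak{m}/\mathfrak{d}$; the $\ell^2$-norms of these shortened sequences are still controlled by $\mathcal{A}(L)$ and $\mathcal{B}(N)$, while the combinatorial divisor factors $d_F(\mathfrak{m}/\mathfrak{d})$ that arise are absorbed into the $(\log Q)^2$ loss already present in the target bound.
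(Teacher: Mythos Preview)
Your small-moduli treatment is fine, but the large-moduli argument has a real gap. After you open the congruence with characters and restrict to $\N\mathfrak{m}>M_0$, every $\chi\pmod{\mathfrak{m}}$ occurs, and most of these are imprimitive. When you pass to the primitive inducing character $\chi^*\pmod{\mathfrak{d}}$, the conductor $\N\mathfrak{d}$ ranges over \emph{all} divisors of $\mathfrak{m}$; in particular $\N\mathfrak{d}=1$ (the principal character) occurs for every $\mathfrak{m}$ in the block. Your displayed large-sieve bound is for primitive $\chi$ of conductor $\sim M$, so it simply omits these small-conductor primitives, and they cannot be absorbed: for the principal character alone one has $|A(\chi_0)||B(\chi_0)|\leq\sqrt{LN\,\mathcal{A}\mathcal{B}}$, while $\sum_{\N\mathfrak{m}\sim M}\phi_F(\mathfrak{m})^{-1}\asymp 1$, so each dyadic block already contributes $\sqrt{LN\,\mathcal{A}\mathcal{B}}$ and the sum over blocks gives $(\log Q)\sqrt{LN\,\mathcal{A}\mathcal{B}}$ instead of the required $\sqrt{LN}/(\log N)^A$. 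Equivalently, your per-block claim $\sum_{\N\mathfrak{m}\sim M}\phi_F(\mathfrak{m})^{-1}\sum_{\chi}|A(\chi)|^2\ll M^{-1}(M^2+L)\mathcal{A}(L)$ is false when $M$ is small compared to $\sqrt{L}$: the principal-character terms alone contribute $\gg L\,\mathcal{A}(L)$.

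The paper avoids this by splitting on the \emph{conductor} $\N\mathfrak{r}$ of the primitive inducing character rather than on the modulus $\N\mathfrak{m}$. One writes $\mathfrak{m}=\mathfrak{c}\mathfrak{r}$ with $\chi$ primitive mod $\mathfrak{r}$, fixes $\mathfrak{c}$, and takes $R=(\log N)^A$: the range $\N\mathfrak{r}>R$ is handled by Cauchy--Schwarz plus Lemma~\ref{lem-largesieve} (your dyadic analysis is correct for this piece), while $\N\mathfrak{r}\leq R$ is handled by the Siegel--Walfisz hypothesis on $B$ together with the trivial bound on $A$. The subtlety in this second range is that the cofactor $\mathfrak{c}$ can have norm as large as $Q$, so the coprimality constraint $(\mathfrak{n},\mathfrak{c})=\cO_F$ inside $B_\mathfrak{c}(\chi)$ must itself be removed by M\"obius; the paper truncates that M\"obius sum at a parameter $K=(\log N)^{6A}$, applying \eqref{Siegel-W-condition} on residue classes mod $\mathfrak{r}\mathfrak{l}$ for the small divisors $\N\mathfrak{l}\leq K$ and Cauchy--Schwarz for the tail. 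This is precisely the mechanism your sketch lacks, and it is why the hypothesis carries the exponent $9A$ rather than $A$.
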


\begin{proof}  Using the orthogonality of multiplicative characters, we see that the left-hand side in \eqref{eq-bilinear} is bounded by
\begin{equation}\label{eq-conv}
\begin{aligned}
\sum_{\mathrm{N}\mathfrak{m}\leq Q} &\frac{1}{\phi_F(\mathfrak{m})}\sum_{\chi \in \widehat{\mathrm{Cl}^{+}(\mathfrak{m})} } \Big|\sum_{\mathrm{N}\mathfrak{l}\leq L} a(\mathfrak{l})\chi(\mathfrak{l})\Big|  \Big|\sum_{\mathrm{N}\mathfrak{n}\leq N} b(\mathfrak{n})\chi(\mathfrak{n})\Big|\\
&\leq \sum_{\mathrm{N}\mathfrak{cr} \leq Q} \frac{1}{\phi_F(\mathfrak{c})\phi_F(\mathfrak{r})} \sideset{}{^*}{\sum}_{\chi\in\widehat{\mathrm{Cl}^{+}(\mathfrak{r})}} \Big|\sum_{\substack{\mathrm{N}\mathfrak{l}\leq L\\ (\mathfrak{l},\mathfrak{c})=\cO_F}} a(\mathfrak{l})\chi(\mathfrak{l})\Big|  \Big|\sum_{\substack{\mathrm{N}\mathfrak{n}\leq N \\ (\mathfrak{n},\mathfrak{c})=\cO_F}} b(\mathfrak{n})\chi(\mathfrak{n})\Big|.
\end{aligned}
\end{equation}
Fix $\mathfrak{c}$ and split the sum over $\mathfrak{r}$ into dyadic segments $\mathrm{N}\mathfrak{r} \sim R_{1}$. Then apply the Cauchy--Schwarz inequality to the dyadic segment to get
$$
\leq \Big(\sum_{\mathrm{N}\mathfrak{r} \sim R_{1}} \frac{1}{\phi_F(\mathfrak{r})} \sideset{}{^*}{\sum}_{\chi\in\widehat{\mathrm{Cl}^{+}(\mathfrak{r})}} \Big|\sum_{\substack{\mathrm{N}\mathfrak{l}\leq L\\ (\mathfrak{l},\mathfrak{c})=\cO_F}} a(\mathfrak{l})\chi(\mathfrak{l})\Big|^{2}\Big)^{\frac{1}{2}}\Big(\sum_{\mathrm{N}\mathfrak{r} \sim R_{1}} \frac{1}{\phi_F(\mathfrak{r})} \sideset{}{^*}{\sum}_{\chi\in\widehat{\mathrm{Cl}^{+}(\mathfrak{r})}} \Big|\sum_{\substack{\mathrm{N}\mathfrak{n}\leq N \\ (\mathfrak{n},\mathfrak{c})=\cO_F}} b(\mathfrak{n})\chi(\mathfrak{n})\Big|^{2}\Big)^{\frac{1}{2}}
$$
By Lemma \ref{lem-largesieve}, the above is
\begin{equation*}
\begin{aligned}
\ll\Big(R_{1}+\frac{L}{R_{1}}\Big)^{\frac{1}{2}}\Big(R_{1}+\frac{N}{R_{1}}\Big)^{\frac{1}{2}}\Big(\sum_{\mathrm{N}\mathfrak{l}\leq L}|a(\mathfrak{l})|^{2}\Big)^{\frac{1}{2}}\Big(\sum_{\mathrm{N}\mathfrak{n}\leq N}|b(\mathfrak{n})|^{2}\Big)^{\frac{1}{2}} \\
\ll\Big(R_{1}+\sqrt{L+N}+\frac{\sqrt{L N}}{R_{1}}\Big)\Big(\sum_{\mathrm{N}\mathfrak{l}\leq L}|a(\mathfrak{l})|^{2}\Big)^{\frac{1}{2}}\Big(\sum_{\mathrm{N}\mathfrak{n}\leq N}|b(\mathfrak{n})|^{2}\Big)^{\frac{1}{2}}.
\end{aligned}
\end{equation*}
Summing this over $R \leq R_{1} \leq Q /\mathrm{N}\mathfrak{c}$, we get
\begin{equation}\label{sum-R1-Large}
\Big(\frac{Q}{\mathrm{N}\mathfrak{c}}+\sqrt{L+N} \log Q+\frac{\sqrt{L N}}{R}\Big)\Big(\sum_{\mathrm{N}\mathfrak{l}\leq L}|a(\mathfrak{l})|^{2}\Big)^{\frac{1}{2}}\Big(\sum_{\mathrm{N}\mathfrak{n}\leq N}|b(\mathfrak{n})|^{2}\Big)^{\frac{1}{2}}.
\end{equation}
It remains to estimate the contribution of the primitive characters $\chi\in\widehat{\mathrm{Cl}^{+}(\mathfrak{r})}$ with $1\leq \mathrm{N}\mathfrak{r} \leq R.$ For each of these we appeal to the condition \eqref{Siegel-W-condition}.

First, we define M\"obius function $\mu_{F}(\mathfrak{a})$ for $F$ by
\[
\mu_{F}(\mathfrak{a})=\left\{\begin{array}{ll}
1 & \mathfrak{a}=\cO_F, \\
(-1)^{r} & \mathfrak{a} \text { is a product of } r \text { distinct prime ideals, } \\
0 & \mathfrak{a} \text { is divided by square of a prime ideal. }
\end{array}\right.
\]
It is easy to see that $\mu_{F}(\mathfrak{a})$ is a multiplicative function over the ideals which satisfies
\begin{equation*}
\sum_{\mathfrak{o}|\mathfrak{a}}\mu_{F}(\mathfrak{o})=\left\{\begin{array}{ll}
1 & \mathfrak{a}=\cO_F, \\
0 & \text {otherwise.}
\end{array}\right.
\end{equation*}
We detect the coprimality condition $(\mathfrak{n},\mathfrak{c})=\cO_F$ by this formula and get
\begin{equation*}
\begin{aligned}
\sum_{\substack{\mathrm{N}\mathfrak{n}\leq N \\ (\mathfrak{n},\mathfrak{c})=\cO_F}} b(\mathfrak{n})\chi(\mathfrak{n}) &=\sum_{\mathfrak{o}|\mathfrak{c}}\mu_{F}(\mathfrak{o})	\sum_{\substack{\mathrm{N}\mathfrak{n}\leq N \\ \mathfrak{o}|\mathfrak{n}}} b(\mathfrak{n})\chi(\mathfrak{n}) \\
&=\sum_{\substack{\mathfrak{o}|\mathfrak{c}\\ \mathrm{N}\mathfrak{o}\leq K}}\mu_{F}(\mathfrak{o})	 \sum_{\mathfrak{l}|\mathfrak{o}}\mu_{F}(\mathfrak{l})	\sum_{\substack{\mathrm{N}\mathfrak{n}\leq N \\ (\mathfrak{n},\mathfrak{l})=\cO_F}} b(\mathfrak{n})\chi(\mathfrak{n})+\sum_{\substack{\mathfrak{o}|\mathfrak{c}\\ \mathrm{N}\mathfrak{o}>K }}\mu_{F}(\mathfrak{o})	\sum_{\substack{\mathrm{N}\mathfrak{n}\leq N \\ \mathfrak{o}|\mathfrak{n}}} b(\mathfrak{n})\chi(\mathfrak{n})\\
&:=W_1+W_2.
\end{aligned}
\end{equation*}
where $K$ will be chosen later. Next we estimate the innermost sum in $W_1$ by splitting into classes in $\mathrm{Cl}^{+}(\mathfrak{m})$, and for each class we apply the Siegel--Walfisz hypothesis \eqref{Siegel-W-condition}. Consequently, we have that
\[
\sum_{\substack{\mathrm{N}\mathfrak{n}\leq N \\ (\mathfrak{n},\mathfrak{l})=\cO_F}} b(\mathfrak{n})\chi(\mathfrak{n})=\sum_{\mathfrak{h}\in \mathrm{Cl}^{+}(\mathfrak{rl})} \chi(\mathfrak{h})	\sum_{\substack{\mathrm{N}\mathfrak{n}\leq N \\ \mathfrak{n}\equiv\mathfrak{h} \text{ in } \mathrm{Cl}^{+}(\mathfrak{rl})}}b(\mathfrak{n})\ll \frac{ h(\mathfrak{rl}) N^{\frac{1}{2}}}{(\log N)^{9A}}\Big(\sum_{\mathrm{N}\mathfrak{n}\leq N}|b(\mathfrak{n})|^{2}\Big)^{\frac{1}{2}},
\]
where we choose $\mathfrak{h}$ to be integral representatives. This further yields
\[
W_1\ll \frac{ N\mathfrak{r} N^{\frac{1}{2}}}{(\log N)^{A}}\Big(\sum_{\mathrm{N}\mathfrak{n}\leq N}|b(\mathfrak{n})|^{2}\Big)^{\frac{1}{2}}\sum_{\substack{\mathfrak{o}|\mathfrak{c}\\ \mathrm{N}\mathfrak{o}\leq K}}	 \sum_{\mathfrak{l}|\mathfrak{o}}|\mu_{F}(\mathfrak{l})|N\mathfrak{l}\ll\frac{ RK N^{\frac{1}{2}}d_F(\mathfrak{c})}{(\log N)^{9A}}\Big(\sum_{\mathrm{N}\mathfrak{n}\leq N}|b(\mathfrak{n})|^{2}\Big)^{\frac{1}{2}}.
\]
Then we treat the sum $W_2$ and apply the Cauchy--Schwarz inequality getting
\[
W_2\ll \Big(\frac{N}{K}\Big)^{\frac{1}{2}} \Big(\sum_{\mathrm{N}\mathfrak{n}\leq N}|b(\mathfrak{n})|^{2}\Big)^{\frac{1}{2}}d_F(\mathfrak{c}).
\]
Adding these two estimates for $W_1$ and $W_2$ and then choosing $K=(\log N)^{6A}$, we obtain
\begin{equation}\label{sum-b-copime}
\sum_{\substack{\mathrm{N}\mathfrak{n}\leq N \\ (\mathfrak{n},\mathfrak{c})=\cO_F}} b(\mathfrak{n})\chi(\mathfrak{n})\ll \frac{ R N^{\frac{1}{2}}d_F(\mathfrak{c})}{(\log N)^{3A}}\Big(\sum_{\mathrm{N}\mathfrak{n}\leq N}|b(\mathfrak{n})|^{2}\Big)^{\frac{1}{2}}.
\end{equation}
Moreover, we use the trivial bound
\[
\sum_{\substack{\mathrm{N}\mathfrak{l}\leq L\\ (\mathfrak{l},\mathfrak{c})=\cO_F}} a(\mathfrak{l})\chi(\mathfrak{l})\ll L^{\frac{1}{2}}\Big(\sum_{\mathrm{N}\mathfrak{l}\leq L}|a(\mathfrak{l})|^{2}\Big)^{\frac{1}{2}}.
\]
Combining this with the estimate \eqref{sum-b-copime} together, we obtain
\begin{equation}\label{sum-R1-small}
\begin{aligned}
&\sum_{\mathrm{N}\mathfrak{r} \leq R} \frac{1}{\phi_F(\mathfrak{r})} \sideset{}{^*}{\sum}_{\chi\in\widehat{\mathrm{Cl}^{+}(\mathfrak{r})}} \Big|\sum_{\substack{\mathrm{N}\mathfrak{l}\leq L\\ (\mathfrak{l},\mathfrak{c})=\cO_F}} a(\mathfrak{l})\chi(\mathfrak{l})\Big|  \Big|\sum_{\substack{\mathrm{N}\mathfrak{n}\leq N \\ (\mathfrak{n},\mathfrak{c})=\cO_F}} b(\mathfrak{n})\chi(\mathfrak{n})\Big|\\
&\ll  \frac{R^2\sqrt{L N}d_F(\mathfrak{c})}{(\log N)^{3A}} \Big(\sum_{\mathrm{N}\mathfrak{l}\leq L}|a(\mathfrak{l})|^{2}\Big)^{\frac{1}{2}}\Big(\sum_{\mathrm{N}\mathfrak{n}\leq N}|b(\mathfrak{n})|^{2}\Big)^{\frac{1}{2}}.
\end{aligned}
\end{equation}
Summing \eqref{sum-R1-small} and \eqref{sum-R1-Large} over $\mathfrak{c}$, we infer the following bound for our original sum in \eqref{eq-conv}:
$$
\Big(Q+\sqrt{L+N} +\frac{\sqrt{L N}}{R}+\frac{R^2\sqrt{L N}}{(\log N)^{3A}}\Big) (\log Q)^2\Big(\sum_{\mathrm{N}\mathfrak{l}\leq L}|a(\mathfrak{l})|^{2}\Big)^{\frac{1}{2}}\Big(\sum_{\mathrm{N}\mathfrak{n}\leq N}|b(\mathfrak{n})|^{2}\Big)^{\frac{1}{2}}
$$
Choosing $R=(\log N)^{A}$, we complete the proof of the theorem.
\end{proof}

\begin{lemma}\label{max-lem-Vaughan}
With the notation and conditions as in Lemma \ref{lem-Vaughan}, we have
\begin{equation*}
\begin{aligned}
&\sum_{\mathrm{N}\mathfrak{m}\leq Q}\frac{h(\mathfrak{m})}{\phi_F(\mathfrak{m})} \max_{(\mathfrak{a},\mathfrak{m})=\cO_F} \max_{y\leq x} \Big| {\underset{\mathrm{N}\mathfrak{ln}\leq y\atop \mathfrak{ln}\equiv\mathfrak{a} \text{ in } \mathrm{Cl}^{+}(\mathfrak{m})}{\sum_{\mathrm{N}\mathfrak{l}\leq L}\sum_{\mathrm{N}\mathfrak{n}\leq N}}}a(\mathfrak{l})b(\mathfrak{n}) \Big|\\
&\ll	\Big(Q+\sqrt{L+N}+\frac{\sqrt{L N}}{(\log N)^{A}}\Big) (\log xLN)(\log Q)^2\mathcal{A}(L)^{\frac{1}{2}}\mathcal{B}(N)^{\frac{1}{2}}.
\end{aligned}
\end{equation*}
\end{lemma}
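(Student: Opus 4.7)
The plan is to use a truncated Mellin--Perron identity to detach the sharp cutoff $\mathrm{N}\mathfrak{l}\mathfrak{n}\leq y$ from the arithmetic, reducing matters to a single $t$-integral of sums that have the exact shape already handled by Lemma~\ref{lem-Vaughan}. If $y\geq LN$ the cutoff is vacuous and Lemma~\ref{lem-Vaughan} gives the bound immediately, so throughout I may assume $y<LN\leq xLN$.

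First, I would apply the standard truncated Perron formula
\[
\mathbf{1}[\mathrm{N}\mathfrak{l}\mathfrak{n}\leq y] \;=\; \frac{1}{2\pi i}\int_{c-iT}^{c+iT}\Big(\frac{y}{\mathrm{N}\mathfrak{l}\mathfrak{n}}\Big)^{s}\,\frac{ds}{s} \;+\; E(\mathrm{N}\mathfrak{l}\mathfrak{n},y,T),
\]
with $c=1/\log(xLN)$ and $T=(\log x)^{C}$ for a constant $C=C(A)$ to be chosen, where $|E(n,y,T)|\ll \min\!\big(1,(y/n)^{c}/(T|\log(y/n)|)\big)$. Setting
\[
G_{\mathfrak{m},\mathfrak{a}}(s)\;:=\;\sum_{\substack{\mathrm{N}\mathfrak{l}\leq L,\;\mathrm{N}\mathfrak{n}\leq N\\ \mathfrak{l}\mathfrak{n}\equiv\mathfrak{a}\text{ in }\mathrm{Cl}^{+}(\mathfrak{m})}}\frac{a(\mathfrak{l})b(\mathfrak{n})}{\mathrm{N}(\mathfrak{l}\mathfrak{n})^{s}},
\]
the inner bilinear sum equals $\frac{1}{2\pi}\int_{-T}^{T}\frac{y^{c+it}}{c+it}\,G_{\mathfrak{m},\mathfrak{a}}(c+it)\,dt$ plus an error arising from $E$. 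Since $|y^{c+it}|=y^{c}\leq (xLN)^{c}\ll 1$, this main term is bounded uniformly in $y\leq x$ by $\frac{1}{2\pi}\int_{-T}^{T}|G_{\mathfrak{m},\mathfrak{a}}(c+it)||c+it|^{-1}\,dt$. Taking the maxima over $y$ and $\mathfrak{a}$ and summing over $\mathfrak{m}$ (using $\max_{\mathfrak{a}}\!\int\leq\int\!\max_{\mathfrak{a}}$ together with Fubini) yields
\[
\int_{-T}^{T}\frac{1}{|c+it|}\Bigg(\sum_{\mathrm{N}\mathfrak{m}\leq Q}\frac{h(\mathfrak{m})}{\phi_F(\mathfrak{m})}\max_{(\mathfrak{a},\mathfrak{m})=\cO_F}\big|G_{\mathfrak{m},\mathfrak{a}}(c+it)\big|\Bigg)dt.
\]

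Next, I would apply Lemma~\ref{lem-Vaughan} pointwise in $t$ to the twisted sequences $\tilde a(\mathfrak{l}):=a(\mathfrak{l})\mathrm{N}\mathfrak{l}^{-c-it}$ and $\tilde b(\mathfrak{n}):=b(\mathfrak{n})\mathrm{N}\mathfrak{n}^{-c-it}$. Because $c>0$, the $\ell^{2}$-norms obey $\tilde{\mathcal{A}}(L)\leq\mathcal{A}(L)$ and $\tilde{\mathcal{B}}(N)\leq\mathcal{B}(N)$. A partial-summation argument transfers the Siegel--Walfisz hypothesis from $b$ to $\tilde b$ at the cost of a factor $1+|t|\leq 1+T$; I would absorb this loss by running Lemma~\ref{lem-Vaughan} with a strengthened initial hypothesis, taking the exponent $9A'$ for $b$ with $A'=A+O(C)$. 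Then the twisted sequence $\tilde b$ satisfies the Siegel--Walfisz hypothesis with exponent $9A$ uniformly for $|t|\leq T$, so Lemma~\ref{lem-Vaughan} delivers the bound $\big(Q+\sqrt{L+N}+\sqrt{LN}/(\log N)^{A}\big)(\log Q)^{2}\mathcal{A}(L)^{1/2}\mathcal{B}(N)^{1/2}$ uniformly in $t$. Integrating against the weight $|c+it|^{-1}$ gives $\int_{-T}^{T}dt/|c+it|\ll \log(T/c)\ll \log(xLN)$, which is precisely the extra logarithmic factor in the stated inequality.

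Finally, I would dispose of the Perron truncation error $E$ via the standard two-range argument: the contribution from pairs with $\mathrm{N}\mathfrak{l}\mathfrak{n}$ well-separated from $y$ is $O(y/T)$ against a divisor-type sum, while the thin range $|\mathrm{N}\mathfrak{l}\mathfrak{n}-y|\ll y/T$ is handled by Cauchy--Schwarz against the $\ell^{2}$-norms $\mathcal{A}(L)$ and $\mathcal{B}(N)$; a sufficiently large choice of $C=C(A)$ renders both contributions negligible compared to the main term after summation over $\mathfrak{m}$. The main obstacle is the transfer of the Siegel--Walfisz hypothesis to $\tilde b$: partial summation unavoidably introduces the factor $1+|t|$, so the parameters $T$ and $c$ must be balanced carefully against the hypothesis exponent. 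This is precisely the point where the argument of \cite{FI-2010} cannot be quoted verbatim, and why the Fourier device of Vaughan \cite{Vaughan-1980} is required.
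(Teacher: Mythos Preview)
Your Perron approach does not close. The bottleneck is the truncation parameter $T$: you set $T=(\log x)^{C}$, but after taking $\max_{\mathfrak{a}}$, $\max_{y\le x}$, and summing over $\mathrm{N}\mathfrak{m}\le Q$, the Perron remainder picks up a factor of $Q$. Even the ``well-separated'' contribution is already of size
\[
\sum_{\mathrm{N}\mathfrak{m}\le Q}\frac{h(\mathfrak{m})}{\phi_F(\mathfrak{m})}\cdot\frac{1}{T}\sum_{\mathfrak{l},\mathfrak{n}}|a(\mathfrak{l})b(\mathfrak{n})|\;\gg\;\frac{Q}{T}\sqrt{LN\,\mathcal A(L)\,\mathcal B(N)},
\]
and since $Q$ may be as large as $x^{1/2}$ while $T$ is only a power of $\log x$, this is not dominated by any of the three terms in the stated bound. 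Enlarging $T$ to $T\gg Q$ would repair this, but then your partial-summation transfer of the Siegel--Walfisz hypothesis loses a factor $1+T$ which is polynomial in $x$, whereas the hypothesis of Lemma~\ref{lem-Vaughan} only supplies savings of an arbitrary \emph{power of $\log N$}; the loss cannot be absorbed. The two constraints on $T$ are therefore incompatible, and no choice of $C$ makes the proposal work.

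The paper avoids this tension by using Vaughan's sine-integral device rather than Perron. One writes
\[
\mathbf{1}[\mathrm{N}\mathfrak{l}\mathfrak{n}\le y]=\frac{1}{\pi}\int_{-A}^{A}(\mathrm{N}\mathfrak{l}\mathfrak{n})^{i\alpha}\,\frac{\sin\gamma\alpha}{\alpha}\,d\alpha+O\!\Big(\frac{1}{A|\gamma-\log\mathrm{N}\mathfrak{l}\mathfrak{n}|}\Big),\qquad \gamma=\log\big(\lfloor y\rfloor+\tfrac12\big).
\]
Because $\lfloor y\rfloor+\tfrac12$ is a half-integer and $\mathrm{N}\mathfrak{l}\mathfrak{n}$ is an integer, one has $|\gamma-\log\mathrm{N}\mathfrak{l}\mathfrak{n}|\gg 1/y$ \emph{uniformly}, so the error is genuinely $O(y/A)$ per term with no short-range exception. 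Taking $A=xLN$, the total error over all $\mathfrak{m},\mathfrak{a},y$ is $\ll Q\sqrt{\mathcal A(L)\mathcal B(N)/LN}\le Q\sqrt{\mathcal A(L)\mathcal B(N)}$, already one of the admissible terms. In the main term the twist is the pure phase $(\mathrm{N}\mathfrak{n})^{i\alpha}$, the weight $|\sin(\gamma\alpha)/\alpha|\le\min(\log x,|\alpha|^{-1})$ integrates to $\ll\log(xLN)$, and Lemma~\ref{lem-Vaughan} is applied pointwise in $\alpha$. You correctly identify the obstacle in your closing sentence; the remedy is to replace the Perron integral by Vaughan's sine integral, not to tune Perron parameters.
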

\begin{proof}
We follow the trick of Vaughan \cite[Lemma 2]{Vaughan-1980}. If $\gamma>0$, then
\[
\frac{1}{\pi}\int_{-\infty}^{\infty} e^{i \beta \alpha} \frac{\sin \gamma \alpha}{\alpha} d \alpha=\begin{cases}
1&\mbox{if $0\leq\beta<\gamma$,}\\
0&\mbox{if $\beta>\gamma$}	
\end{cases}
=:\delta(\beta)
\]
via the product-to-sum identities of trigonometric functions and  the identity $e^{i \beta \alpha}=\cos\beta \alpha+i\sin\beta \alpha$.  By integration by parts, we get
$$
\int_{A}^{\infty} \frac{\sin Y \alpha}{\alpha} d \alpha \ll \frac{1}{Y A}
$$
for any $Y>0$ and $A>0.$ Thus, using the product-to-sum identities of trigonometric functions again, we have
\begin{equation}\label{delta-beta}
\delta(\beta)=\frac{1}{\pi}\int_{-A}^{A} e^{i \beta \alpha} \frac{\sin \gamma \alpha}{\alpha} d \alpha+O\Big(\frac{1}{A|\gamma-\beta|}\Big).
\end{equation}
Taking $\gamma=\log (\lfloor y\rfloor+\frac{1}{2})$ and $\beta=\log \mathrm{N}\mathfrak{ln}$, we obtain the equality
$$
\delta(\log \mathrm{N}\mathfrak{ln})=\left\{\begin{array}{ll}
1 & \mathrm{N}\mathfrak{ln} \leq y, \\
0 & \mathrm{N}\mathfrak{ln}>y.
\end{array}\right.
$$
Then it follows from \eqref{delta-beta} that
$$
\delta(\log \mathrm{N}\mathfrak{ln})=\frac{1}{\pi}\int_{-A}^{A}(\mathrm{N}\mathfrak{ln})^{i \alpha} \frac{\sin \gamma \alpha}{\alpha} d \alpha+O\Big(\frac{y}{A}\Big).
$$
Hence
\begin{multline*}
{\underset{\mathrm{N}\mathfrak{ln}\leq y\atop \mathfrak{ln}\equiv\mathfrak{a} \text{ in } \mathrm{Cl}^{+}(\mathfrak{m})}{\sum_{\mathrm{N}\mathfrak{l}\leq L}\sum_{\mathrm{N}\mathfrak{n}\leq N}}}a(\mathfrak{l})b(\mathfrak{n}) ={\underset{\mathfrak{ln}\equiv\mathfrak{a} \text{ in } \mathrm{Cl}^{+}(\mathfrak{m})}{\sum_{\mathrm{N}\mathfrak{l}\leq L}\sum_{\mathrm{N}\mathfrak{n}\leq N}}}a(\mathfrak{l})b(\mathfrak{n})	\delta(\log \mathrm{N}\mathfrak{ln})  \\
=\frac{1}{\pi}\int_{-A}^{A} {\underset{\mathfrak{ln}\equiv\mathfrak{a} \text{ in } \mathrm{Cl}^{+}(\mathfrak{m})}{\sum_{\mathrm{N}\mathfrak{l}\leq L}\sum_{\mathrm{N}\mathfrak{n}\leq N}}}a(\mathfrak{l})(\mathrm{N}\mathfrak{l})^{i \alpha}b(\mathfrak{n})(\mathrm{N}\mathfrak{n})^{i \alpha}	\frac{\sin \gamma \alpha}{\alpha} d \alpha+O\Big(\frac{y}{A} \sum_{\mathrm{N}\mathfrak{l}\leq L}\sum_{\mathrm{N}\mathfrak{n}\leq N}|a(\mathfrak{l})b(\mathfrak{n})|\Big) .
\end{multline*}
The error term here is manageable if we take $A=xLN$. For $y \leq x$, the integral is
$$
\ll \int_{-A}^{A}\Big| {\underset{\mathfrak{ln}\equiv\mathfrak{a} \text{ in } \mathrm{Cl}^{+}(\mathfrak{m})}{\sum_{\mathrm{N}\mathfrak{l}\leq L}\sum_{\mathrm{N}\mathfrak{n}\leq N}}}a(\mathfrak{l})(\mathrm{N}\mathfrak{l})^{i \alpha}b(\mathfrak{n})(\mathrm{N}\mathfrak{n})^{i \alpha}\Big| \min \Big(\log x, \frac{1}{|\alpha|}\Big) d \alpha.
$$
Summing the integrand over modulus $\mathfrak{m}$ and applying Lemma \ref{lem-Vaughan}, we have 
\begin{equation*}
\begin{aligned}
&\sum_{\mathrm{N}\mathfrak{m}\leq Q}\frac{h(\mathfrak{m})}{\phi_F(\mathfrak{m})} \max_{(\mathfrak{a},\mathfrak{m})=\cO_F} \max_{y\leq x} \Big| {\underset{ \mathfrak{ln}\equiv\mathfrak{a} \text{ in } \mathrm{Cl}^{+}(\mathfrak{m})}{\sum_{\mathrm{N}\mathfrak{l}\leq L}\sum_{\mathrm{N}\mathfrak{n}\leq N}}}a(\mathfrak{l})(\mathrm{N}\mathfrak{l})^{i \alpha}b(\mathfrak{n})(\mathrm{N}\mathfrak{n})^{i \alpha}\Big| \\
&\ll	\Big(Q+\sqrt{L+N}+\frac{\sqrt{L N}}{(\log N)^{A}}\Big) (\log Q)^2\mathcal{A}(L)^{\frac{1}{2}}\mathcal{B}(N)^{\frac{1}{2}}.
\end{aligned}
\end{equation*}
Since $\int_{-A}^{A} \min (\log x,|\alpha|^{-1}) d \alpha \ll \log xLN$ trivially, the lemma follows.
\end{proof}


\vskip 5mm

\section{Proof of Theorem \ref{thm:prime_power_version}}

It remains to estimate the averages
\[
\sum_{\mathrm{N}\mathfrak{m}\leq Q} \frac{h(\mathfrak{m})}{\phi_F(\mathfrak{m})}\max_{(\mathfrak{a},\mathfrak{m})=\cO_F}\max_{y\leq x}|S_i|
\]
for $i=1,2,3,4$, where the terms $S_i$ are given by (3.4)-(3.7).  To begin, we define
\[
\alpha_{F,\pi}(\mathfrak{n})=\sum_{\substack{\mathfrak{bc=n} \\ \mathrm{N}\mathfrak{b}\leq X,~\mathrm{N}\mathfrak{c}\leq X}}\mu_\pi(\mathfrak{b})\Lambda_F(\mathfrak{c})a_\pi(\mathfrak{c}), \quad \beta_{F,\pi}(\mathfrak{n})=\sum_{\substack{\mathfrak{bd=n}\\ \mathfrak{b}> X}}\mu_\pi(\mathfrak{b})\lambda_\pi(\mathfrak{d}).
\]
By inequalities in Section \ref{sec-inequality}, we can estimate the second moments of $\Lambda_F(\mathfrak{n})a_\pi(\mathfrak{n}), \alpha_{F,\pi}(\mathfrak{n})$ and $\beta_{F,\pi}(\mathfrak{n})$. Firstly, we get from the  Cauchy--Schwarz inequality and \eqref{PNT-RS} that
\begin{equation}\label{square-lambda}
\begin{aligned}
\sum_{\mathrm{N}\mathfrak{n}\leq x}|\Lambda_F(\mathfrak{n})a_\pi(\mathfrak{n})|^2 &\ll\Big(\sum_{\mathrm{N}\mathfrak{n}\leq x}\Lambda_F(\mathfrak{n})\Big)\Big(\sum_{\mathrm{N}\mathfrak{n}\leq x}\Lambda_F(\mathfrak{n})a_{\pi\otimes \tilde\pi}(\mathfrak{n})\Big)\ll x.
\end{aligned}
\end{equation}
Secondly, by the bound \eqref{boundcoeff} and the definitions of $\Lambda_F(\mathfrak{n})$ and $a_{\pi\otimes \tilde\pi}(\mathfrak{n})$, we have
\begin{equation*}
\begin{aligned}
\sum_{\mathrm{N}\mathfrak{n}\leq x}d_F(\mathfrak{n})\Lambda_F(\mathfrak{n})a_{\pi\otimes \tilde\pi}(\mathfrak{n}) &= \sum_{k\ll \log x} \sum_{\mathrm{N}\mathfrak{p}^k\leq x}(k+1) a_{\pi\otimes \tilde\pi}(\mathfrak{p}^k)\log \mathrm{N}\mathfrak{p}\\
&= \sum_{k\leq n^2+1} \sum_{\mathrm{N}\mathfrak{p}^k\leq x}(k+1) a_{\pi\otimes \tilde\pi}(\mathfrak{p}^k)\log \mathrm{N}\mathfrak{p}+x^{1-\frac{1}{n^2+1}+\varepsilon}\\
&\ll \sum_{\mathrm{N}\mathfrak{n}\leq x}\Lambda_F(\mathfrak{n})a_{\pi\otimes \tilde\pi}(\mathfrak{n})  +x^{1-\frac{1}{n^2+1}+\varepsilon}\ll x.
\end{aligned}
\end{equation*}
This further implies that
\begin{equation}\label{square-alpha}
\begin{aligned}
\sum_{\mathrm{N}\mathfrak{n}\leq x}|\alpha_{F,\pi}(\mathfrak{n})|^2 &\ll \sum_{\mathrm{N}\mathfrak{n}\leq x}\sum_{\mathfrak{bc=n}}|\mu_\pi(\mathfrak{b})\Lambda_F(\mathfrak{c})a_\pi(\mathfrak{c})|^2 d_F(\mathfrak{n})\\
&\ll (\log x)\sum_{\mathrm{N}\mathfrak{b}\leq x}d_F(\mathfrak{b})|\mu_\pi(\mathfrak{b})|^2\sum_{\mathrm{N}\mathfrak{c}\leq x/\mathrm{N}\mathfrak{b}}d_F(\mathfrak{c})\Lambda_F(\mathfrak{c})a_{\pi\otimes \tilde\pi}(\mathfrak{c}) \ll x(\log x)^{n+2},
\end{aligned}
\end{equation}
by inserting Lemma \ref{ba2-sum}, partial summation and the trivial fact $d_F(\mathfrak{bc})\leq d_F(\mathfrak{b})d_F(\mathfrak{c})$.
Thirdly, we immediately obtain from Lemma \ref{ba2-sum} that
\begin{equation}\label{square-beta}
\begin{aligned}
\sum_{\mathrm{N}\mathfrak{n}\leq x}|\beta_{F,\pi}(\mathfrak{n})|^2 \ll x(\log x)^{4n+3}.
\end{aligned}
\end{equation}

\subsection{The average for $S_1$}

To estimate the contribution from $S_1,$ we use the Cauchy--Schwarz inequality and \eqref{square-lambda} to obtain
\begin{equation*}\label{est-S1}
\sum_{\mathrm{N}\mathfrak{m}\leq Q} \frac{h(\mathfrak{m})}{\phi_F(\mathfrak{m})}\max_{(\mathfrak{a},\mathfrak{m})=\cO_F} \max_{y\leq x}|S_1| \ll QX^{\frac{1}{2}} \Big(\sum_{\mathrm{N}\mathfrak{n}\leq X}|\Lambda_F(\mathfrak{n})a_\pi(\mathfrak{n})|^2\Big)^{\frac{1}{2}}\ll QX.
\end{equation*}

\subsection{The average for $S_2$}
We split $S_2$ in the following way:
\[
S_2 = \Big(\sum_{\mathrm{N}\mathfrak{b}\leq H}+\sum_{H<\mathrm{N}\mathfrak{b}\leq X} \Big)   \mu_\pi(\mathfrak{b}) \sum_{\substack{\mathrm{N}\mathfrak{c}\leq y/\mathrm{N}\mathfrak{b}\\ \mathfrak{c}\equiv\mathfrak{ab^{-1}} \text{ in } \mathrm{Cl}^{+}(\mathfrak{m})}}\lambda_\pi(\mathfrak{c})\log \mathrm{N}\mathfrak{c}=:S_2^{'}+S_2^{''},
\]
where $H<X$.  We deduce from partial summation and Theorem \ref{thm-bv-integers} that
\begin{equation*}\label{S2prime}
\begin{aligned}
&\sum_{\mathrm{N}\mathfrak{m}\leq Q} \frac{h(\mathfrak{m})}{\phi_F(\mathfrak{m})}\max_{(\mathfrak{a},\mathfrak{m})=\cO_F}\max_{y\leq x} |S_2^{'}|\\
&\leq  \sum_{\mathrm{N}\mathfrak{b}\leq H} |\mu_\pi(\mathfrak{b})| \sum_{\mathrm{N}\mathfrak{m}\leq Q}
\frac{h(\mathfrak{m})}{\phi_F(\mathfrak{m})}\max_{(\mathfrak{ab},\mathfrak{m})=\cO_F}\max_{y\leq x}\Big|  \sum_{\substack{\mathrm{N}\mathfrak{c}\leq y/\mathrm{N}\mathfrak{b}\\ \mathfrak{c}\equiv\mathfrak{ab^{-1}} \text{ in } \mathrm{Cl}^{+}(\mathfrak{m})}}\lambda_\pi(\mathfrak{c})\log \mathrm{N}\mathfrak{c}\Big|\\
&\ll  \log x\Big(\sum_{\mathrm{N}\mathfrak{b}\leq H} |\mu_\pi(\mathfrak{b})|\Big)\sum_{\mathrm{N}\mathfrak{m}\leq Q}
\frac{h(\mathfrak{m})}{\phi_F(\mathfrak{m})}\max_{(\mathfrak{a},\mathfrak{m})=\cO_F}\max_{y\leq x}\Big|  \sum_{\substack{\mathrm{N}\mathfrak{c}\leq y\\ \mathfrak{c}\equiv\mathfrak{a} \text{ in } \mathrm{Cl}^{+}(\mathfrak{m})}}\lambda_\pi(\mathfrak{c})\Big|\ll  \frac{Hx}{(\log x)^A}.
\end{aligned}
\end{equation*}
We use the Cauchy--Schwarz inequality and Lemma \ref{ba2-sum} in the last step, and emphasize that $Q\leq x^{\frac{1}{\eta}}(\log x)^{-B}$ with $\eta=\max\{\frac{n}{2},2\}$, $B=2^{\frac{n[F:\Q]}{4}}(2A+16)+2n-4$.
In order to estimate the contribution of $S_2^{''},$ splitting the range of summation over $\mathfrak{b}$ into intervals of the form $\mathrm{N}\mathfrak{b}\sim M$ with $H<M\leq X$, we get
\begin{equation}\label{s2-doube}
\begin{aligned}
& \sum_{\mathrm{N}\mathfrak{m}\leq Q} \frac{h(\mathfrak{m})}{\phi_F(\mathfrak{m})}\max_{(\mathfrak{a},\mathfrak{m})=\cO_F}\max_{y\leq x} |S_2^{''}| \\
&\ll (\log x)\max_{H\leq M\leq X}  \sum_{\mathrm{N}\mathfrak{m}\leq Q} \frac{h(\mathfrak{m})}{\phi_F(\mathfrak{m})}\max_{(\mathfrak{a},\mathfrak{m})=\cO_F}\max_{y\leq x}  \Big| {\underset{\mathrm{N}\mathfrak{bc}\leq y\atop \mathfrak{bc}\equiv\mathfrak{a} \text{ in } \mathrm{Cl}^{+}(\mathfrak{m})}{\sum_{\mathrm{N}\mathfrak{b}\sim M}\sum_{\mathrm{N}\mathfrak{c}\leq x/M}}}\mu_\pi(\mathfrak{b})\lambda_\pi(\mathfrak{c})\log \mathrm{N}\mathfrak{c}\Big|.
\end{aligned}
\end{equation}

In order to apply Lemma \ref{max-lem-Vaughan},  we need to verify the sequence $\{\lambda_\pi(\mathfrak{n})\log \mathrm{N}\mathfrak{n}\}$ satisfies the Siegel--Walfisz hypothesis. So we have to estimate the sum of type $\sum_{\mathrm{N}\mathfrak{n} \leq x } \lambda_\pi(\mathfrak{n})\chi(\mathfrak{n})$ for any $\chi \in \widehat{\mathrm{Cl}^{+}(\mathfrak{m})}$. We choose a function $h$ supported on $[0,x+Y]$, such that $h(z)=1$ if $Y\leq z \leq x$ and $h^{(j)}(x) \ll_j Y^{-j}$ for all $j \geq 0$. Here, the parameter $Y$ will be chosen later subject to $1\leq Y \leq x$. By partial integration, the Mellin transform of $h$ satisfies
\begin{equation*}
\hat{h}(s)=\int_{0}^{x+Y} h(z)z^{s-1}dz  \ll \frac{Y}{x^{1-\sigma}}\cdot \Big(\frac{x}{|s|Y}\Big)^j
\end{equation*}
for any $j\geq 1$ and $1/2\leq \sigma=\Re s \leq 2$. Moreover, we derive from \eqref{asymRS}, Lemma \ref{ineq-2rs} and the Cauchy--Schwarz inequality that
\begin{equation*}
\sum_{x<\mathrm{N}\mathfrak{n}  \leq x+Y }| \lambda_\pi(\mathfrak{n} )|\ll Y^{\frac12}\Big(\sum_{\mathrm{N}\mathfrak{n} \leq x+Y }| \lambda_\pi(\mathfrak{n} )|^2\Big)^{\frac12}\ll  (xY)^{\frac12}
\end{equation*}
for $0<Y\leq x$.  Consequently, we have that
\begin{equation*}\label{syupper-1}
\sum_{\mathrm{N}\mathfrak{n} \leq x } \lambda_\pi(\mathfrak{n})\chi(\mathfrak{n})=\sum_{\mathfrak{n}} \lambda_\pi(\mathfrak{n})\chi(\mathfrak{n}) h(\mathrm{N}\mathfrak{n})+O((xY)^{\frac{1}{2}}).
\end{equation*}
By Mellin's inverse transform, we can write
\begin{equation}\label{syupper-2}
\sum_{\mathfrak{n}} \lambda_\pi(\mathfrak{n})\chi(\mathfrak{n}) h(\mathrm{N}\mathfrak{n})=\frac{1}{2\pi i}\int_{(2)}\hat{h}(s)\Big(\sum_{\mathfrak{n}}\frac{\lambda_{\pi}(\mathfrak{n})\chi(\mathfrak{n})}{{\mathrm{N}\mathfrak{n}}^s}\Big)ds.
\end{equation}
We know from \eqref{DS-L} that the Dirichlet series appearing above equals $L(s, \pi\otimes \chi_1)$ except for some Euler factors, where $\chi_1 (\text{mod}\; \mathfrak{m}^\prime)$ is a primitive character induced by $\chi$ with $\mathfrak{m}^\prime|\mathfrak{m}$.
The convexity bound for $L(s, \pi\otimes \chi_1)$ gives
\begin{equation*}
L(\tfrac{1}{2}+it, \pi\otimes \chi_1)  \ll  ({\mathrm{N}\mathfrak{m}^\prime}(3+|t|)^{[F:\Q]})^{\frac{n}{4}+\varepsilon}.
\end{equation*}
Moving the vertical line of integration in \eqref{syupper-2} to $\Re s=1/2$,
we obtain by Cauchy's theorem and \eqref{DS-L} that
\begin{multline*}
\sum_{\mathfrak{n}} \lambda_\pi(\mathfrak{n})\chi(\mathfrak{n}) h(\mathrm{N}\mathfrak{n})\ll d_F(\mathfrak{m})^{2n} \int_{(1/2)}|\hat{h}(s)L(s, \pi\otimes \chi_1)|ds\\
\ll\int_{0}^{x/Y}\frac{x^{\frac{1}{2}}}{1+t}({\mathrm{N}\mathfrak{m}^\prime}(3+t)^{[F:\Q]})^{\frac{n}{4}}	(\mathrm{N}\mathfrak{m}(3+t))^\varepsilon dt \\
+\int_{x/Y}^{\infty}\frac{Y}{x^{\frac{1}{2}}}\cdot \Big(\frac{x}{tY}\Big)^{\frac{n[F:\Q]}{4}+2}({\mathrm{N}\mathfrak{m}^\prime} t^{[F:\Q]})^{\frac{n}{4}}	(\mathrm{N}\mathfrak{m}t)^\varepsilon dt\ll\mathrm{N}\mathfrak{m}^\varepsilon {\mathrm{N}\mathfrak{m}^\prime}^{\frac{n}{4}} \Big(\frac{ x}{Y}\Big)^{\frac{n[F:\Q]}{4}+\varepsilon}x^{\frac12}.
\end{multline*}
Gathering the above results we arrive at
\begin{equation*}
\sum_{\mathrm{N}\mathfrak{n} \leq x } \lambda_\pi(\mathfrak{n})\chi(\mathfrak{n}) \ll \mathrm{N}\mathfrak{m}^\varepsilon {\mathrm{N}\mathfrak{m}^\prime}^{\frac{n}{4}} \Big(\frac{ x}{Y}\Big)^{\frac{n[F:\Q]}{4}+\varepsilon}x^{\frac12}+ (xY)^{\frac12}.
\end{equation*}
We choose $Y=({\mathrm{N}\mathfrak{m}^\prime}x^d)^{\frac{n}{n[F:\Q]+2}}$, thus obtaining
\begin{equation*}
\sum_{n\leq x }\lambda_\pi(\mathfrak{n})\chi(\mathfrak{n})\ll \mathrm{N}\mathfrak{m}^{\frac{n}{2n[F:\Q]+4}+\varepsilon}x^{\frac{n[F:\Q]+1}{n[F:\Q]+2}+\varepsilon}.
\end{equation*}
Since the orthogonality of characters yields
\begin{equation*}
\sum_{\substack{\mathrm{N}\mathfrak{n}\leq x \\ \mathfrak{n}\equiv \mathfrak{a} \text{ in } \mathrm{Cl}^{+}(\mathfrak{m})}}\lambda_\pi(\mathfrak{n}) \ll \max_{\chi \in \widehat{\mathrm{Cl}^{+}(\mathfrak{m})} }\Big|\sum_{\mathrm{N}\mathfrak{n}\leq x}\lambda_\pi(\mathfrak{n}) \chi(\mathfrak{n})\Big|\leq x^{1-\varepsilon}
\end{equation*}
for any $\mathfrak{m}$ with $\mathrm{N}\mathfrak{m}\leq x^{\frac{2}{n}-\varepsilon}$,  the sequence $\{\lambda_\pi(\mathfrak{n})\}$ satisfies the Siegel--Walfisz hypothesis, and so does $\{\lambda_\pi(\mathfrak{n})\log \mathrm{N}\mathfrak{n} \}$ by partial summation. Thus, we can apply Lemma \ref{max-lem-Vaughan} to the sum on the last line of \eqref{s2-doube} and then get from the second estimate in Lemma \ref{ba2-sum}
\begin{equation*}
\begin{aligned}		
&\ll (\log xQ)^4\max_{H\leq M\leq X}  \Big(Q+\sqrt{M+\frac{x}{M}}+\frac{\sqrt{x}}{(\log x/M)^{A}}\Big) \sqrt{\sum_{\mathrm{N}\mathfrak{b}\sim M}|\mu_\pi(\mathfrak{b})|^2 \sum_{\mathrm{N}\mathfrak{c}\sim x/M}|\lambda_\pi(\mathfrak{c})\log\mathfrak{c}|^2}\\
&\ll (\log xQ)^5 \Big(Q\sqrt{x}+\sqrt{Xx}+\frac{x}{\sqrt{H}}+\frac{x}{(\log x/X)^{A}}\Big).
\end{aligned}
\end{equation*}
To sum up, the contribution from $S_2$ is
\begin{equation*}
\sum_{\mathrm{N}\mathfrak{m}\leq Q} \frac{h(\mathfrak{m})}{\phi_F(\mathfrak{m})}\max_{(\mathfrak{a},\mathfrak{m})=\cO_F}\max_{y\leq x} |S_2|
\ll \frac{Hx}{(\log x)^A} +(\log xQ)^{5} \Big(Q\sqrt{x}+\sqrt{Xx}+\frac{x}{\sqrt{H}}+\frac{x}{(\log x/X)^{A}}\Big).
\end{equation*}

\subsection{The average for $S_3$.}
The treatment is similar to that of $S_2$. If we use \eqref{square-alpha}, then
\begin{equation*}
\begin{aligned}
& \sum_{\mathrm{N}\mathfrak{m}\leq Q} \frac{h(\mathfrak{m})}{\phi_F(\mathfrak{m})}\max_{(\mathfrak{a},\mathfrak{m})=\cO_F}\max_{y\leq x} |S_3| \\
&\ll \frac{Hx}{(\log x)^{A-\frac{n}{2}-1}} +(\log xQ)^{\frac{n}{2}+5} \Big(Q\sqrt{x}+X\sqrt{x}+\frac{x}{\sqrt{H}}+\frac{x}{(\log x/X^2)^{A}}\Big).
\end{aligned}
\end{equation*}

\subsection{The average for $S_4$}
This contribution is of the same form as $S_2^{''}$ except the coefficients $\beta_{F,\pi}(\mathfrak{n})$ instead of $\mu_\pi(\mathfrak{n})$ and $\Lambda_F(\mathfrak{c})a_\pi(\mathfrak{c})$ instead of $\lambda_\pi(\mathfrak{n})\log\mathfrak{n}$. Since the Siegel--Walfisz condition of $\Lambda_F(\mathfrak{c})a_\pi(\mathfrak{c})$ follows from Corollary \ref{cor-SW-thm}, the estimates \eqref{square-lambda} and \eqref{square-beta} give that
\begin{equation*}
\sum_{\mathrm{N}\mathfrak{m}\leq Q} \frac{h(\mathfrak{m})}{\phi_F(\mathfrak{m})}\max_{(\mathfrak{a},\mathfrak{m})=\cO_F}\max_{y\leq x} |S_4|
\ll (\log xQ)^{2n+6} \Big(Q\sqrt{x}+\frac{x}{\sqrt{X}}+\frac{x}{(\log X)^{A}}\Big).
\end{equation*}
\subsection{Finishing the proof}
Finally, we collect all of our estimates for the averages of $S_1$, $S_2$, $S_3$, and $S_4$, taking $H=(\log x)^{\frac{2}{3}(A+4)} $ and $X=x^{\frac{1}{3}}$, to arrive at
\begin{equation*}
\begin{aligned}
\sum_{\mathrm{N}\mathfrak{m}\leq x^{\frac{1}{\eta}}(\log x)^{-B}} \frac{h(\mathfrak{m})}{\phi_F(\mathfrak{m})}\max_{(\mathfrak{a},\mathfrak{m})=\cO_F} \max_{y\leq x}\Big|\sum_{\substack{\mathrm{N}\mathfrak{n}\leq y \\ \mathfrak{n}\equiv\mathfrak{a} \text{ in } \mathrm{Cl}^{+}(\mathfrak{m})}}\Lambda_F(\mathfrak{n})a_\pi(\mathfrak{n})\Big|\ll \frac{x}{(\log x)^{\frac{A}{3}-2n-6} },
\end{aligned}
\end{equation*}
where $\eta=\max\{\frac{n}{2},2\}, B=2^{\frac{n[F:\Q]}{4}}(2A+16)+2n-4$. After changing parameter $\frac{A}{3}-2n-6\mapsto A$ and inserting the corresponding estimate into \eqref{primetopower}, Theorem \ref{thm 1.1} follows.
	
\section{An arithmetic application:  Proof of Corollary \ref{cor-titchmarsh}}
By the definition of divisor function, we have
$$
d(m)=2 \sum_{\substack{r \mid m \\ r<\sqrt{m}}} 1+\delta_{\square}(m),\qquad \delta_{\square}(m):=\left\{\begin{array}{ll}
1 & \text { if } m \text { is a perfect square,} \\
0 & \text { otherwise.}
\end{array}\right.
$$
Hence, we deduce that
\begin{equation}\label{shiftsum-prime}
\begin{aligned}
\sum_{p\leq x}\lambda_\pi(p)d(p-1)&= 2\sum_{p\leq x}\lambda_\pi(p)\sum_{\substack{r|(p-1) \\ r< \sqrt{p-1}}}1
+\sum_{p\leq x}\lambda_\pi(p)\delta_{\square}(p-1)\\
&=2\sum_{r<\sqrt{x-1}}\sum_{\substack{p\leq x \\ p\equiv 1\bmod r}}\lambda_\pi(p)+O(x^{\frac{3}{4}}).\\
\end{aligned}
\end{equation}
Here the error term comes from
\[
\sum_{p\leq x }\lambda_\pi(p)\delta_{\square}(p-1)\ll \Big(\sum_{p\leq x}|\lambda_\pi(p)|^2\Big)^{\frac{1}{2}} \Big(\sum_{p\leq x}\delta_{\square}(p-1)\Big)^{\frac{1}{2}}
\ll x^{\frac{3}{4}}
\]
by using the Cauchy--Schwarz inequality and the Rankin--Selberg theory.

For $r \leq x^\frac{1}{2}(\log x)^{-B}$, the result in Theorem \ref{thm 1.1} produces a small enough estimate. Then we have
\begin{equation}\label{d-small}
\begin{aligned}
\sum_{r\leq x^\frac{1}{2}(\log x)^{-B}}\sum_{\substack{p\leq x\\ p\equiv 1\bmod r}}\lambda_\pi(p)\ll \sum_{r\leq x^\frac{1}{2}(\log x)^{-B}}\Big|\sum_{\substack{p\leq x\\ p\equiv 1\bmod r}}\lambda_\pi(p)\Big|\ll  \frac{x}{(\log x)^A}.
\end{aligned}
\end{equation}

For the contribution of the terms $\sqrt{x}(\log x)^{-B} <r<\sqrt{x-1},$  the Brun-Titchmarsh inequality is used in general. However, it is not suitable for the automorphic context, since GRC remains open.  By using the orthogonality relation of additive characters, we get
\begin{equation*}
\sum_{\substack{p\leq x\\ p\equiv 1\bmod r}}\lambda_\pi(p)=\frac{1}{r}\sum_{q|r }\sideset{}{^*}\sum_{a\bmod q }e\Big(\frac{-a}{q}\Big)\sum_{p\leq x}\lambda_\pi(p)e\Big(\frac{ap}{q}\Big).
\end{equation*}
Thus,
\begin{equation}\label{d-large}
\begin{aligned}
&\sum_{\frac{\sqrt{x}}{(\log x)^{B}} <r< \sqrt{x-1}}\sum_{\substack{p\leq x\\ p\equiv 1\bmod r}}\lambda_\pi(p)\\
&=\sum_{l< \sqrt{x-1}}\frac{1}{l}\sum_{\frac{\sqrt{x}}{l(\log x)^B}< q< \frac{\sqrt{x-1}}{l}}\frac{1}{q}\; \; \sideset{}{^*}\sum_{a\bmod q }e\Big(\frac{-a}{q}\Big) \sum_{p\leq x}\lambda_\pi(p)e\Big(\frac{ap}{q}\Big):= T_1+T_2,
\end{aligned}
\end{equation}
where $T_1$ denotes the contribution of the terms $l\leq (\log x)^B$ and $T_2$ denotes the contribution of the other terms. First we treat the sum $T_1$. Applying the Cauchy--Schwarz inequality and the additive large sieve inequality (e.g. \cite[Theorem 7.11]{IK-2004}), we have
\begin{equation}\label{T1-estimate}
\begin{aligned}
T_1&\ll \sum_{l\leq  (\log x)^B}\frac{1}{l}\sum_{\frac{\sqrt{x}}{l(\log x)^B}<q< \frac{\sqrt{x+1}}{l}}\frac{1}{q} \; \; \sideset{}{^*}\sum_{a\bmod q }\Big|\sum_{p\leq x}\lambda_\pi(p)e\Big(\frac{ap}{q}\Big)\Big|\\
&\ll 	\sum_{l\leq  (\log x)^B}\frac{1}{l}\Big(\sum_{q< \frac{\sqrt{x+1}}{l}} \; \; \sideset{}{^*}\sum_{a\bmod q }\Big|\sum_{p\leq x}\lambda_\pi(p)e\Big(\frac{ap}{q}\Big)\Big|^2\Big)^{\frac{1}{2}}
\Big(\sum_{\frac{\sqrt{x}}{l(\log x)^B}<q< \frac{\sqrt{x+1}}{l}}\frac{1}{q}\Big)^{\frac{1}{2}}\\
&\ll x^{\frac{1}{2}}\Big(\sum_{p\leq x}|\lambda_\pi(p)|^2\Big)^{\frac{1}{2}}(\log\log x)^{\frac{3}{2}}\ll \frac{x(\log\log x)^{\frac{3}{2}}}{\sqrt{\log x}},
\end{aligned}
\end{equation}
where we use the fact that $|\lambda_\pi(p)|^2\ll a_{\pi\times\tilde \pi}(p)$ and the estimate \eqref{PNT-RS} in the last step.
Next, we treat the sum $T_2$. Interchanging the order of summation and applying the formula for the Ramanujan sum
$$ \sideset{}{^*}\sum_{a\bmod q }e\Big(\frac{(p-1)a}{q}\Big) =\sum_{d|(p-1,q)}d\mu\Big(\frac{q}{d}\Big),$$
we obtain from Theorem \ref{thm 1.1} that
\begin{multline}
\label{T2-estimate}	
T_2= \sum_{ (\log x)^B< l< \sqrt{x-1}}\frac{1}{l}\sum_{m< \frac{\sqrt{x-1}}{l}}\frac{\mu(m)}{m} \sum_{\frac{\sqrt{x}}{ml(\log x)^B}<q< \frac{\sqrt{x-1}}{ml}}\sum_{\substack{p\leq x\\ p\equiv 1\bmod q }}\lambda_\pi(p) \\
\ll (\log x)^2\sum_{q< \frac{\sqrt{x-1}}{(\log x)^B}}\Big|\sum_{\substack{p\leq x\\ p\equiv 1\bmod q }}\lambda_\pi(p)\Big|\ll  \frac{x}{(\log x)^A}.
\end{multline}
Assembling these estimates in \eqref{d-small}--\eqref{T2-estimate} yields
\[
\sum_{r<\sqrt{x-1}}\sum_{\substack{p\leq x\\ p\equiv 1\bmod r}}\lambda_\pi(p)\ll  \frac{x(\log\log x)^{\frac{3}{2}}}{\sqrt{\log x}}.
\]
Inserting this into \eqref{shiftsum-prime}, this corollary follows.

\vskip 5mm
\bibliographystyle{plain}

\bibliography{JIANG-Bibtex}

\end{document}